\documentclass[11pt, twoside, leqno]{amsart}  
\usepackage{lipsum}
\usepackage{amsfonts}
\usepackage{graphicx}
\usepackage{epstopdf}
\usepackage{algorithmic}  
\usepackage{calligra}
\usepackage[dvipsnames]{xcolor}
\usepackage{amsfonts,amsmath,amsthm,amssymb}
\usepackage{mathtools}
\usepackage{hyperref}
\usepackage[makeroom]{cancel}
\usepackage{autonum}
\usepackage{hhline}
\usepackage{array}
\usepackage{diagbox}
\usepackage{mdframed}
\usepackage{multicol}
\usepackage{graphicx}
\usepackage{subcaption}
\usepackage{moreverb}
\usepackage{bbm}
\usepackage[margin=1.38in]{geometry}
\usepackage{todonotes}
\usepackage{scalerel,amssymb}
\allowdisplaybreaks
\usepackage{mathrsfs}  
\usepackage{lineno}
\usepackage{todonotes}
\usepackage{tikz}
\usepackage{appendix}
\usepackage{enumitem}
\usepackage{pgfplots}
\usetikzlibrary{arrows.meta}
\usepackage[numbers,sort&compress]{natbib}
\definecolor{mygreen}{HTML}{43a047}
\usepackage{subcaption}
\usepackage{doi}

\newcommand{\ddt}{\frac{\textup{d}}{\textup{d}t}}

\newcommand{\dt}{\, \textup{d} t}
\newcommand{\ds}{\, \textup{d} s }
\newcommand{\dx}{\, \textup{d} x}

\newcommand{\R}{\mathbb{R}}
\newcommand{\Z}{\mathbb{Z}}

\hypersetup{hidelinks}

\newtheorem{theorem}{Theorem}
\newtheorem{lemma}{Lemma}
\newtheorem{proposition}{Proposition}

\newtheorem{definition}{Definition}
\newtheorem{remark}{Remark}
\numberwithin{lemma}{section}
\numberwithin{proposition}{section}
\numberwithin{theorem}{section}
\numberwithin{equation}{section}
\makeatletter
\newcommand{\leqnomode}{\tagsleft@true}
\newcommand{\reqnomode}{\tagsleft@false}
\makeatother

\definecolor{grey}{rgb}{0.5,0.5,0.5}
        
\title[Nonlocal Hamer system]{On the decay estimates of a nonlocal convection-diffusion Hamer system}        
\subjclass[2020]{35L70, 35K05}      
     
\keywords{Hamer system, Hybrid Besov Spaces, Radiating gases, Nonlocal dissipation, Asymptotic analysis}  
            
\author[Timothée Crin-Barat]{Timoth\'ee Crin-Barat$^\dagger\,^*$}  
\thanks{$^\dagger$Institut de Mathématiques de Toulouse, Université de Toulouse, 118 Route de Narbonne, Toulouse,
31062, France (\href{timothee.crin-barat@math.univ-toulouse.fr}{timothee.crin-barat@math.univ-toulouse.fr})\\$^*$Corresponding author: timothee.crin-barat@math.univ-toulouse.fr}   
\author[Belkacem Said-Houari]{Belkacem Said-Houari$^\ddag$}
\thanks{$^\ddag$Department of Mathematics, College of Sciences, University of
	Sharjah, P.\ O.\ Box: 27272, Sharjah, United Arab Emirates (\href{bhouari@sharjah.ac.ae}{bhouari@sharjah.ac.ae})}

\begin{document}
	\vspace*{8mm}
	\begin{abstract}
We consider the multi-dimensional Hamer model for radiating gases in its coupled hyperbolic--elliptic formulation. By means of energy estimates, we establish the global well-posedness for small initial data in hybrid Besov spaces with distinct regularity exponents at low and high frequencies. This framework enables us to relax the regularity assumptions required in \cite{Duan_Klem_Zhu_2010,Duan_Ruan_Zhu_2012}.
In addition, we establish optimal time-decay estimates for solutions with initial data in the critical Besov space $\dot{B}_{2,\infty}^{-d/2}(\mathbb{R}^d)$, thus extending previous results obtained under the stronger assumption $L^1(\mathbb{R}^d)$. We discuss the optimality of these decay rates and derive improved decay rates under a zero-mass cancellation condition, corresponding to initial data in the larger negative Besov space $\dot B^{-d/2-1}_{2,\infty}(\mathbb R^d)$.
					\end{abstract}   
	\vspace*{-7mm}  
	\maketitle                 
     
\section{Introduction} 
\subsection{Presentation of the system and motivations}
The fundamental system of equations describing the motion of gas in the
presence of radiation is given (see \cite{Ha71,VINCENTI}) by:
\begin{equation}
\left\{ 
\begin{array}{l}
\varrho _{t}+\mathrm{div}(\varrho u)=0,\vspace{0.2cm} \\ 
(\varrho u)_{t}+\mathrm{div}\left( \varrho u\otimes u+p\mathbb{I}\right) =0,%
\vspace{0.2cm} \\ 
\left\{ \varrho (e+\frac{\left\vert u\right\vert ^{2}}{2})\right\} _{t}+%
\mathrm{div}\{\left[\varrho \left(e+\frac{\left\vert u\right\vert ^{2}}{2}\right)+p\right]u+q\}=0,%
\vspace{0.2cm} \\ 
-\nabla \mathrm{div}q+aq+b\nabla \theta ^{4}=0.%
\end{array}
\right.   \label{Radiating_Gas_Main_Equations}
\end{equation}
Here $u$ is the velocity of the gas, $\varrho $ is its density, $p$ is the
pressure, $e$ is the internal energy, $\theta $ is the absolute temperature
of the gas, $q$ is the radiative heat flux and $a$ and $b$ are given
positive constants related to the absorption coefficient $\widetilde{\alpha}$
and the Stefan--Boltzmann constant $\widetilde{\sigma}$ as: $a=3\widetilde{\alpha}%
^{2},b=4\widetilde{\alpha}\widetilde{\sigma}$. The first three equations in  \eqref{Radiating_Gas_Main_Equations}  correspond to the compressible Euler system describing the motion of an inviscid flow. More precisely,    the first equation 
 represents the equation of conservation
of mass (continuity equation), the second equation is the equation of
conservation of momentum (Newton's second law) and  the third equation is the
equation of conservation of energy (first law of thermodynamics). The
fourth equation  represents  the radiative transfer equation (see \cite%
{VINCENTI} and \cite{Ha71} for the derivation of this equation). We assume that the 
pressure $p$ in \eqref{Radiating_Gas_Main_Equations} is given by (the equation of state for a perfect gas):
\begin{equation}
p=\varrho R\theta =A\varrho ^{\gamma }\exp \frac{(\gamma -1)s}{R},
\end{equation}%
where $R$ is the gas constant per unit mass, $s$ is the entropy and $A$ is a positive constant.
 System \eqref{Radiating_Gas_Main_Equations} is a coupling between the Euler equation of a perfect compressible fluid  and an elliptic equation. As such, its mathematical analysis is rather difficult. 

\medbreak
In many mathematical studies, simplified versions of \eqref{Radiating_Gas_Main_Equations} are considered. One such reduction leads to the so-called Hamer model of a radiating gas. More precisely, if we restrict ourselves to the one-dimensional case  and expand the functions $\varrho ,u,s,q$ around the equilibrium state and
retaining the first-order approximation (see \cite{Ha71} and \cite%
{Kawashima_1998}) we end up with the following simplified model, known as
the Hamer model:%
\begin{equation}
\left\{ 
\begin{array}{l}
u_{t}+(u^{2}/2)_{x}+q_{x}=0,\vspace{0.2cm} \\ 
-q_{xx}+q+u_{x}=0. %
\end{array}%
\right.   \label{Hamer_Model}
\end{equation}
The approximating system \eqref{Hamer_Model} takes the form of a hyperbolic-elliptic coupled system and has been studied by many authors. More precisely, the stability of shock waves has been considered by   Kawashima and Nishibata \cite{Kawashima_1998, Kawash_Nishibata_1999}, Lattanzio and Marcati \cite{Lattanzio_2003}, Lattanzio \emph{et al.} \cite{Lattanzio_2009,Lattanzio_2007}, Lin \emph{et al.} \cite{Lin_2007}. See also the work of Iguchi and Kawashima \cite{Iguchi_2002} for diffusion waves.    Serre \cite{Serre_2003} proved the $L^1$-stability for a general flux  $f(u)$ instead of the Burgers flux $u^2/2$ under the assumption of a zero-mass initial disturbance.  He also obtained the $L^1$-stability for shock waves. In fact, he proved that  the limit 
\begin{equation}
\lim_{t\rightarrow +\infty} \Vert u(t)\Vert_{L^1}=0,
\end{equation}
holds, and as a result he showed the decay $t^{-1/2}$ of the $L^2$-norm of the solution, but no decay rate of the $L^1$-norm was provided in \cite{Serre_2003}. See also the paper of Laurençot \cite{Lauren_2005} where the improved decay rate $t^{-3/4}$ of the $L^2$ norm was proved for initial data in the weighted space $u_0\in L^1(\R, (1+|x|)\dx)$ with $\int_\R u_0(x)\dx=0$. 

\medbreak
In this paper, we consider the $d$-dimensional version of the Hamer model for a general flux $f(u)$, namely, we investigate  the following hyperbolic-elliptic coupled system:
\begin{subequations}\label{System_1_Main}
\begin{equation}  \label{System_1_Main_1}
\left\{ 
\begin{array}{c}
u_{t}+\mathrm{div}\left( f\left( u\right) +q\right) =S, \vspace{0.2cm} \\ 
-\nabla \left( \mathrm{div}q-u\right) +q=0,%
\end{array}%
\right.
\end{equation}
supplemented with the initial data 
\begin{eqnarray}
      \label{Initial_data}
u(t=0)=u_0.
\end{eqnarray}
\end{subequations}
Here $u=u\left( x,t\right) :\mathbb{R}^{d}\times \mathbb{R}_{+}\rightarrow 
\mathbb{R}$ and $q\left( x,t\right) :\mathbb{R}^{d}\times \mathbb{R}%
_{+}\rightarrow \mathbb{R}^{d}$ are the unknown functions.  The function $%
f(u)=(f_{1}(u),f_{2}(u),\dots ,f_{d}(u)):\mathbb{R}\rightarrow \mathbb{R}%
^{d} $ and $S=S\left( x,t\right) :\mathbb{R}^{d}\times \mathbb{R}%
_{+}\rightarrow \mathbb{R}$ are given source terms. We assume that the function 
$f\left( u\right) $ is a smooth function of  $u.$    
System \eqref{System_1_Main_1} was derived from \eqref{Radiating_Gas_Main_Equations} in \cite{VINCENTI} through an appropriate approximation. 

 System \eqref{System_1_Main} can be rewritten as a single equation with a
nonlocal operator. Indeed, taking the divergence of the second equation in %
\eqref{System_1_Main_1}, we get%
\begin{equation}
\mathrm{div}q=-\Delta Pu,  \label{q_formula}
\end{equation}%
where the operator $\Delta Pu$ is a pseudo-differential operator defined
via the Fourier transform by the formula 
\begin{equation}
\widehat{Pu}\left( \xi \right) =\frac{1}{1+|\xi |^{2}}\hat{u}\left( \xi
\right) .  \label{P_s_definition}
\end{equation}%
Hence, inserting \eqref{q_formula} into the first equation of \eqref{System_1_Main_1}%
, we obtain by taking (for simplicity) $S=0$, 
\begin{equation}
\left\{ 
\begin{array}{ll}
u_{t}-\Delta Pu+\nabla \cdot f(u)=0, & x\in \mathbb{R}^{d},\,t>0,\vspace{%
0.2cm} \\ 
u\left( x,0\right) =u_{0}\left( x\right) , & x\in \mathbb{R}^{d}.%
\end{array}%
\right.  \label{Main_problem}
\end{equation}
As in \cite{Duan_Ruan_Zhu_2012}, we assume without loss of generality that 
\begin{equation}  \label{Assumtion_f}
f_j(0)=f_j^\prime(0)=0,\qquad 1\leq j\leq d.
\end{equation}
Otherwise, one can take the change of variables 
\begin{eqnarray*}
\widetilde{t}=t,\qquad \widetilde{x}_j=x_j-f^\prime_j(0)t
\end{eqnarray*}
and denote $\widetilde{f}(\cdot)$ by 
\begin{equation}
\widetilde{f}(u)=f(u)-f(0)-f^\prime(0)u,
\end{equation}
so that the form of \eqref{Main_problem} remains unchanged, but %
\eqref{Assumtion_f} still holds for $\widetilde{f}$.

The one-dimensional version of system \eqref{Main_problem} was proposed in 
\cite{Rosenau_1989} as a regularized version of the
Chapman--Enskog expansion for hydrodynamics, which is a perturbed method
based on a power series expansion in terms of a small parameter.

When $P$ is the identity operator and $f(u)=b|u|^{p-1} u$ where $b\in \R^d$ is a constant vector and $p\geq 1$, system \eqref{Main_problem} reduces to the convection-diffusion system 
\begin{equation}
\left\{
\begin{array}{ll}
u_{t}-\Delta u+b\cdot \nabla ( u\left\vert u\right\vert ^{p-1}) =0, & x\in
\mathbb{R}^{d},\,t>0,\vspace{0.2cm} \\
u\left( x,0\right) =u_{0}\left( x\right) , & x\in \mathbb{R}^{d},%
\end{array}%
\right.  \label{Covection_diffusion_Equation}
\end{equation}%
The large time behavior of solutions of \eqref{Covection_diffusion_Equation} has been investigated by many authors. See for
instance \cite{Capiro_1996_2}, \cite{Capiro_1996_1},  \cite{Duro_Carpio_2001}%
, \cite{Duro_Zuazua_1999}, \cite{Escob_Vazq_Zuazua_1993_2}, \cite%
{EscobVasqZuazua_1993}, \cite{EscoZuazua_1991}, \cite{Karch_Schon_2002},
\cite{Zuazua_1993}. Assuming $u_0\in L^1(\mathbb{R}^d )$, then the asymptotic
behavior of the solution of (\ref{Covection_diffusion_Equation}) depends on
the exponent $p$ and we have three different cases: $1<p<1+1/d$, $p=1+1/d$
and $p>1+1/d$. 
These three different cases can be seen by taking the
rescaling function
\begin{equation}
u_\lambda(x,t)=\lambda^d u(\lambda x,\lambda^2 t),
\end{equation}
which satisfies
\begin{equation}  \label{Rescaling_Equation}
\partial_tu_{\lambda}-\Delta u_{\lambda}+\lambda^{d(1-p)+1} b\cdot \nabla (
u_\lambda\left\vert u_\lambda \right\vert ^{p-1}) =0.
\end{equation}
This indicates the following:

\begin{itemize}
\item For $p>1+1/d$, the last term on the left-hand side of (\ref%
{Rescaling_Equation}) vanishes and the diffusion term dominates.
Consequently the solution behaves like the heat kernel, i.e.,
\begin{equation}  \label{heat_Kernel_Behavior}
t^{d/2(1-1/q)}\Vert u(.,t)-MG(.,t)\Vert_{L^q}\rightarrow 0, \quad \text{ as }\quad 
t\rightarrow \infty,
\end{equation}
where 
\begin{equation}
M=\int_{\mathbb{R}^d }u_0(x)dx=\int_{\mathbb{R}^d }u(x,t)dx,\quad \text{and}\quad 
G(x,t)=(4\pi t)^{-d/2}\exp(-|x|^2/(4t))
\end{equation}
is the fundamental solution of the
heat equation.

\item For $p=1+1/d$, the diffusion and the convection are balanced and
we see that $u_\lambda(x,t)$ is also a solution of (\ref%
{Covection_diffusion_Equation}). Consequently, the large time behavior of
the solution is described in terms of self--similar solutions
\begin{equation}
U_M(x,t)=t^{-d/2}U_M(xt^{-1/2},1)
\end{equation}
of equation (\ref{Covection_diffusion_Equation}) with $u_0=M\delta_0$, where $\delta_0$ is the Dirac delta function at zero. 

\item For $1<p<1+1/d$ and for large time, the effect of diffusion is
negligible as compared to convection in the direction of $b$. Therefore, the
asymptotic behavior of the solution to (\ref{Covection_diffusion_Equation})
is given by the fundamental entropy solutions of the reduced equation
\begin{equation}
u_{t}-\Delta^\prime u+b\cdot \nabla ( u\left\vert u\right\vert ^{p-1}) =0,
\end{equation}
where $\Delta^{\prime}$ denotes the $(d-1)$-dimensional Laplacian in the
hyperplane orthogonal to the vector  $b$.
\end{itemize}
A more general version of \eqref{Covection_diffusion_Equation} has been considered in \cite{Duan_Ruan_Zhu_2012} where the operator $P$ has been replaced by the operator $P_s$ whose Fourier symbol is given by  $\frac{1}{(m(\xi))^s}$, with $m(\xi)\approx 1+|\xi|^2$ where the authors proved several decay rates exhibiting a regularity-gain property for $s< 1$ and regularity-loss phenomenon for
$s>1$.

We define the operator $\sqrt{P}$ by its Fourier symbol as:

\begin{equation}
\widehat{\sqrt{P}u}\left( \xi \right) =\frac{1}{\sqrt{1+|\xi|^2}}\hat{u}%
\left( \xi \right) .
\end{equation}
The operator  $Pu$ can be represented as a convolution  operator 
\begin{equation}
Pu=K\ast u,
\end{equation}
where $K$ is the Bessel potential defined by the integral formula  
\begin{equation}
K(x)=\frac{1}{(4\pi)^{d/2}}\int_0^\infty s^{-d/2}e^{-s-\frac{|x|^2}{4s}}ds. 
\end{equation}
It is not difficult to see that the above kernel satisfies the  following property (see \cite[Section V.3.1]{Stein_1}): 
\begin{equation}
K(x)=K(|x|)\geq 0,\qquad \Delta K\ast u=-u+K\ast u\quad \text{and}\quad \int_{\mathbb{R}^d} K(x)dx=1
\end{equation}
Hence, we can recast system \eqref{Main_problem} as 
\begin{equation}\label{Nonlocal_form}
u_{t}+\nabla \cdot f(u)=-u+K\ast u.
\end{equation}
The Cauchy problem associated with the one-dimensional version of \eqref{Nonlocal_form} has been investigated in \cite{Lattanzio_2003} where the authors established both global existence and uniqueness of a weak entropy solution of \eqref{Nonlocal_form}. Moreover, the relaxation limit has also been investigated.   
In \cite{Francesco:2007aa}   the author  extended the result in \cite{Lattanzio_2003}  to higher-dimensional settings.

System \eqref{Main_problem} has been investigated in \cite{Duan_Klem_Zhu_2010}, 
where the authors obtained a  global existence result for small initial-data solution in $H^{s}(\R^d)$ for $s\geq 2[d/2]+2$. They also proved decay estimates for $\|u(t)\|_{L^2}$ for initial data in $L^2(\R^d)\cap L^{1}(\R^d)$.  
\subsection{Aims of our work}
  The aim of this work is twofold: 
  \begin{itemize}
 \item  First,  to improve the global existence result in \cite{Duan_Klem_Zhu_2010} by reducing  the regularity  assumption on the initial data. The operator $\Delta P u$ behaves like the usual Laplacian at the low frequencies $|\xi|\leq 1$ and acts as a
damping term at high frequencies $|\xi|\geq 1$. Motivated by this observation and inspired by the approach developed in \cite{CRINBARAT20221,Danchin_EMS,XuXin}, and with the goal of getting optimal estimates, we rely on the use of hybrid Besov spaces, in which different regularity exponents are assigned to the low and high-frequency regimes.    
   In fact, we show that for small initial data in the hybrid Besov space $\dot{{B}}_{2,1}^{\frac{d}{2}-1,L}(\R^d)\cap \dot{{B}}_{2,1}^{\frac{d}{2}+1,H},\, d\geq 2$, (see Section \ref{Hybrid_Besov} for precise definitions of these Besov spaces) system \eqref{Main_problem} has a unique global solution.   This improves the regularity assumptions in \cite{Duan_Klem_Zhu_2010} and \cite{Duan_Ruan_Zhu_2012}. 
   (See Remark \ref{Remark_Compa} below). 
   
   \item   Secondly, we aim to derive optimal decay estimates characterizing the asymptotic behavior of the solution. More precisely, 
      we establish  decay estimates for the linearized model for initial data in $\dot{{B}}_{2,\infty}^{-d/2}(\mathbb{R}^d) $ thereby improving the corresponding results obtained in  \cite{Duan_Klem_Zhu_2010}. This improvement relies on the embedding $L^1(\mathbb{R}%
^d)\hookrightarrow \dot{{B}}_{2,\infty}^{-d/2}(\mathbb{R}^d)$. These decay estimates are then extended to the nonlinear problem, where various types of decay estimates are established.   
We emphasize that the use of  initial data in $\dot{{B}}_{2,\infty}^{-d/2}(\mathbb{R}^d)$ or more generally in the spaces $\dot{{B}}_{2,\infty}^{-s}(\mathbb{R}^d) $ with $s\in (0, d/2]$ was first introduced by Sohinger and Strain \cite{Sohinger_Strain_2014} in the context of the Boltzmann equation, and by Xu and Kawashima \cite{Xu_Kawashima_2015} for partially dissipative hyperbolic systems.
\end{itemize}

\subsection{Outline of the paper}
The paper is organized as follows. In Section \ref{section_prel}, we introduce notation and recall the main tools from Littlewood-Paley theory, including the definition of Besov spaces and some related preliminary lemmas. 
 Section \ref{Linear_Section} is devoted to the study of the linearized model, where improved decay estimates are obtained.  In Section  \ref{Section_Nonl}, we prove a global existence result by employing the energy method in hybrid Besov spaces. In Section \ref{Section_Decay}, we prove the decay rate of the $L^2$-norm of the solution based on a modified Nash-type inequality. Furthermore, we determine the decay rate of the solution in some Besov norms of the solution, provided that the initial data satisfy some additional assumptions. In addition, we show the optimality of the decay rate of the $L^2$ norm of the solution, by deriving a matching lower bound of the $L^2$ norm of the solution. A faster decay rate of the solution for initial data in $\dot{B}^{-d/2-1}_{2,\infty}(\mathbb{R}^d) $ is obtained in Section \ref{Section_Improved_Decay}. In Section \ref{Section_Local_Existence} we establish a local well-posedness   result. 
 Appendix \ref{Appendix_A} collects several technical estimates on Besov spaces, including commutator, product and composition estimates which are used repeatedly in the proofs.



\section{Preliminaries}\label{section_prel}
\subsection{Littlewood-Paley notations}
 In this section, we collect some notions and results
which turn out to be useful in our proof. First, we recall some basic facts
on Littlewood-Paley theory, then we  introduce a class of hybrid Besov spaces that will be useful later in the proof of our main results.

For any positive constants $A$ and $B$, the notation $A\lesssim B$ means
that there exists a positive constant $C>0$ independent of the relevant parameters, such that $A\le CB.$\smallbreak
Let $\mathcal{S}(\mathbb{R}^d)$ be the Schwartz class of rapidly decreasing functions. For a given $f$
in $\mathcal{S}(\mathbb{R}^d)$, we define its Fourier transform $\mathcal{F}%
f=\hat{f}$ and its inverse $\mathcal{F}^{-1}\hat{f}=\check{f}$ as 
\begin{equation}
\hat{f}(\xi)=\int_{\mathbb{R}^d}e^{-ix\cdot \xi} f(x)
dx,\quad \text{and}\quad \check{f}(x)=\frac{1}{(2\pi)^{d}}\int_{\mathbb{R}^d}e^{i x\cdot\xi}\hat{%
f}(\xi)d\xi.
\end{equation}

We denote by 
\begin{equation}
\Lambda^\alpha f=\mathcal{F}^{-1}(|\xi|^\alpha\hat{f}(\xi)),\quad \alpha\in 
\mathbb{R}.
\end{equation}


Throughout the paper, we fix a homogeneous  Littlewood-Paley decomposition $(\dot{\Delta})_{j\in\Z}$
that is defined  by 
$$\dot{\Delta}_j\triangleq\varphi(2^{-j}D) \quad \text{with}\quad\varphi(\xi)\triangleq \chi(\xi/2)-\chi(\xi)$$
where $\chi$ stands for a  smooth function  with range in $[0,1],$ supported in  $B(0,4/3)$ and
such that $\chi\equiv1$ on  $B(0,3/4)$. 
We further set 
$$\dot S_j\triangleq \chi(2^{-j}D) \quad\hbox{for all }\ j\in\Z$$
and define by $\mathcal{S}_h$ the space of
tempered distributions modulo polynomials $\mathcal{P}$.
It is not hard to see that the space $\mathcal{S}_h^\prime$ is exactly the
space of tempered distributions for which we may write 
\begin{equation}
u=\sum_{k\in \mathbb{Z}}\dot{\Delta}_ku.
\end{equation}
This decomposition is called the homogeneous Littlewood-Paley decomposition.

 We have the following  Bernstein inequality. See \cite[Lemma 2.1]{Bahouri_2011_1}.
\begin{lemma}
\label{Lemma_Tao} Let $k$ be an integer and let $f$ be a function with
Fourier support in the annulus $\mathcal{C}=\{2^{k-1}\leq |\xi|\leq
2^{k+1}\} $. Then we have 
\begin{equation}
2^k \Vert f\Vert_{L^p}\lesssim \Vert \nabla f\Vert_{L^p} \lesssim 2^k \Vert
f\Vert_{L^p},
\end{equation}
for all $1\leq p\leq \infty$. In particular, we have 
\begin{equation}
2^k \Vert \dot{\Delta}_kf\Vert_{L^p}\lesssim \Vert \nabla \dot{\Delta}%
_kf\Vert_{L^p} \lesssim 2^k \Vert \dot{\Delta}_kf\Vert_{L^p}.
\end{equation}
\end{lemma}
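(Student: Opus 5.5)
The plan is to reduce both inequalities to a single scaled statement on a fixed annulus and then undo the dilation. Take $g(x)=f(2^{-k}x)$. Then $\hat g(\xi)=2^{kd}\hat f(2^k\xi)$, so $\hat g$ is supported in the fixed annulus $\mathcal{C}_0=\{1/2\le|\xi|\le 2\}$. Moreover, $\|g\|_{L^p}=2^{kd/p}\|f\|_{L^p}$ and $\nabla g(x)=2^{-k}(\nabla f)(2^{-k}x)$, so $\|\nabla g\|_{L^p}=2^{-k}2^{kd/p}\|\nabla f\|_{L^p}$. Both sides carry the same factor $2^{kd/p}$, so it suffices to prove
\[
\|g\|_{L^p}\lesssim\|\nabla g\|_{L^p}\lesssim\|g\|_{L^p}
\]
for every $g$ with Fourier support in $\mathcal{C}_0$, with constants independent of $g$ and $p$.

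For the upper bound, fix a smooth compactly supported $\psi$ with $\psi\equiv1$ on $\mathcal{C}_0$. Then $\hat g=\psi\hat g$, and hence
\[
\partial_j g=\mathcal{F}^{-1}(i\xi_j\psi)\ast g.
\]
Since $i\xi_j\psi\in\mathcal{S}(\mathbb{R}^d)$, its inverse Fourier transform is in $L^1$. Young's inequality then gives $\|\partial_j g\|_{L^p}\le\|\mathcal{F}^{-1}(\xi_j\psi)\|_{L^1}\|g\|_{L^p}$ for all $1\le p\le\infty$.

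For the lower bound, take $\tilde\psi$ smooth, supported in a slightly larger annulus that avoids the origin, with $\tilde\psi\equiv1$ on $\mathcal{C}_0$. On $\mathcal{C}_0$ we have $\sum_j\xi_j^2/|\xi|^2=1$, so we may write
\[
\hat g=\sum_j\frac{-i\xi_j\tilde\psi(\xi)}{|\xi|^2}\,\widehat{\partial_j g}.
\]
Each multiplier $m_j=-i\xi_j\tilde\psi/|\xi|^2$ is smooth and compactly supported away from $0$, so $\mathcal{F}^{-1}m_j\in L^1$. Young's inequality again yields $\|g\|_{L^p}\lesssim\sum_j\|\partial_j g\|_{L^p}\lesssim\|\nabla g\|_{L^p}$.

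The only delicate point is this lower bound: the inverse symbol $\xi/|\xi|^2$ is singular at the origin. It becomes harmless only because the Fourier support is separated from $0$, which lets us insert the cutoff $\tilde\psi$. The "in particular" statement follows immediately. By construction $\varphi(2^{-k}\cdot)$ is supported in $\{\frac34 2^k\le|\xi|\le\frac83 2^k\}\subset\{2^{k-1}\le|\xi|\le2^{k+1}\}$, so we can apply the main inequality to $f=\dot\Delta_k f$.
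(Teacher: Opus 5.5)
The paper does not prove this lemma itself; it cites \cite[Lemma 2.1]{Bahouri_2011_1}. Your argument is exactly the proof given there, and it is correct: dilation to a fixed annulus, then a smooth cutoff plus Young's inequality for the upper bound, and the identity $\hat g=\sum_j \frac{-i\xi_j\tilde\psi(\xi)}{|\xi|^2}\widehat{\partial_j g}$ for the lower bound. One small technical point: take $\psi$ and $\tilde\psi$ equal to $1$ on a \emph{neighborhood} of $\mathcal{C}_0$, not merely on $\mathcal{C}_0$. For $p>2$, $\hat g$ is in general only a tempered distribution, and $\psi\hat g=\hat g$ requires this.

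The step that fails is the inclusion in your last paragraph,
\[
\Bigl\{\tfrac34 2^k\le|\xi|\le\tfrac83 2^k\Bigr\}\subset\bigl\{2^{k-1}\le|\xi|\le 2^{k+1}\bigr\},
\]
which is false because $\frac83>2$. For $|\xi|>2^{k+1}$ one has $\varphi(2^{-k}\xi)=\chi(2^{-k-1}\xi)$. For a typical admissible $\chi$ (e.g.\ radial and positive on $B(0,5/4)$), this does not vanish for $2^{k+1}<|\xi|<\frac52 2^k$. Nor is $\{\frac34 2^k\le|\xi|\le\frac83 2^k\}$ contained in any annulus $\{2^{m-1}\le|\xi|\le 2^{m+1}\}$ with $m\in\mathbb{Z}$, so shifting $k$ does not rescue the step in general. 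This looseness is already present in the paper's formulation of the lemma.

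The repair costs nothing, because your proof never used the radii $1/2$ and $2$. Apply the dilation directly to $\dot\Delta_k f$, i.e.\ set $g(x)=(\dot\Delta_k f)(2^{-k}x)$, whose Fourier transform is supported in the fixed annulus $\{3/4\le|\xi|\le 8/3\}$. Then rerun the cutoff/Young argument with $\psi,\tilde\psi$ adapted to that annulus. Cleaner still, prove the first statement under $\operatorname{supp}\hat f\subset\{a2^k\le|\xi|\le b2^k\}$ for arbitrary fixed $0<a<b$, with constants depending only on $a$, $b$ and $d$. This is the form in which \cite[Lemma 2.1]{Bahouri_2011_1} is stated, and both displays are then special cases.
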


The following two lemmas are adapted to the linear operator of the system we consider.

We have the following Bernstein-type inequality.
\begin{lemma}
\label{Bernstein_inequality} For any $u\in \mathcal{S}^{\prime }$, there
exists a positive constant $c$ such that 
\begin{equation}
c2^{k}\Vert \dot{\Delta}_{k}u\Vert _{L^{2}}\leq \Vert \nabla\sqrt{P}\dot{\Delta}%
_{k}u\Vert _{L^{2}},\qquad \text{if }k<0,
\end{equation}%
and 
\begin{equation}
c\Vert \dot{\Delta}_{k}u\Vert _{L^{2}}\leq \Vert \nabla\sqrt{P}\dot{\Delta}%
_{k}u\Vert _{L^{2}},\qquad \text{if }k\geq 0.
\end{equation}
\end{lemma}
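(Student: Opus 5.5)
The plan is to argue entirely on the Fourier side via Plancherel, since both $\nabla$ and $\sqrt{P}$ are Fourier multipliers. The multiplier of $\nabla\sqrt{P}$ has modulus
\[
m(\xi)=\frac{|\xi|}{\sqrt{1+|\xi|^2}} .
\]
Plancherel then gives
\[
\Vert \nabla\sqrt{P}\dot{\Delta}_k u\Vert_{L^2}^2=(2\pi)^{-d}\int_{\mathbb{R}^d} m(\xi)^2\,|\varphi(2^{-k}\xi)|^2|\hat u(\xi)|^2\,d\xi .
\]
The integrand is supported in the annulus $\{\tfrac34 2^{k}\le |\xi|\le \tfrac83 2^{k}\}$, which is the support of $\varphi(2^{-k}\cdot)$ given the choice of $\chi$.

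The argument therefore reduces to a pointwise lower bound for $m$ on this annulus, split according to the sign of $k$. The function $r\mapsto r/\sqrt{1+r^2}$ is increasing, so on the annulus $m(\xi)\ge m_k:=\frac{\frac34 2^k}{\sqrt{1+\frac{9}{16}4^k}}$.

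\textbf{Case $k<0$.} Here $2^k\le \tfrac12$, so $1+\tfrac{9}{16}4^{k}\le 2$. This gives $m_k\ge \tfrac{3}{4\sqrt2}\,2^k$, and integrating yields the first inequality with $c=\tfrac{3}{4\sqrt2}$.

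\textbf{Case $k\ge0$.} Here $|\xi|\ge \tfrac34$ on the annulus. Monotonicity then gives $m(\xi)\ge \frac{3/4}{\sqrt{1+9/16}}=\tfrac35$, which yields the second inequality with $c=\tfrac35$.

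Taking $c=\min\{\tfrac{3}{4\sqrt2},\tfrac35\}$ covers both statements. The argument has no real obstacle: the only points to handle are using the exact support of $\varphi$ rather than the looser annulus of Lemma~\ref{Lemma_Tao}, and noting that the constant is uniform in $k$ in each regime. For a general $u\in\mathcal{S}'$ the computation is understood with $\dot\Delta_k u\in L^2$; if it is not, the inequality is trivial with both sides infinite.
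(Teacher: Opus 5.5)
Your proof is correct and follows the paper's argument: you apply Plancherel and use that the symbol $\frac{|\xi|^2}{1+|\xi|^2}$ is comparable to $\min(1,2^{2k})$ on the support of $\varphi(2^{-k}\cdot)$. The only difference is that you make the lower bound explicit, using the exact annulus $\{\frac34 2^k\le|\xi|\le\frac83 2^k\}$ to get the constants $\frac{3}{4\sqrt2}$ and $\frac35$, whereas the paper only states the equivalence.
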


\begin{proof}
The proof of the above result is a consequence of the Plancherel identity
and the fact that 
\begin{equation}\label{abs}
\frac{|\xi|^2}{1+|\xi|^2}\sim \min\bigl(1,2^{2k}\bigr)
\qquad\text{on } \operatorname{supp}\widehat{\dot{\Delta}_k}.
\end{equation}
\end{proof}

\begin{lemma}
\label{Lemma_heat_Semi_Group} We have the following estimates 
\begin{subequations}
\begin{equation}  \label{Exponential_Type_Estimate_2}
\Vert e^{t\Delta P}\dot{\Delta}_ku\Vert_{L^2}\leq e^{-c\frac{t}{2}}\Vert 
\dot{\Delta}_k u\Vert_{L^2},\quad \text{for }\quad k\geq0,
\end{equation}
and 
\begin{equation}  \label{Exponential_Type_Estimate}
\Vert e^{t\Delta P}\dot{\Delta}_ku\Vert_{L^2}\leq e^{-c t2^{2k}}\Vert 
\dot{\Delta}_k u\Vert_{L^2},\quad \text{for }\quad k<0,
\end{equation}
\end{subequations}
\end{lemma}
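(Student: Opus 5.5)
Since $e^{t\Delta P}$ is a Fourier multiplier, I would work entirely on the Fourier side through Plancherel. By definition of $P$, the symbol of $\Delta P$ is $-\frac{|\xi|^2}{1+|\xi|^2}$, so
\begin{equation*}
\widehat{e^{t\Delta P}\dot{\Delta}_k u}(\xi)=e^{-t\frac{|\xi|^2}{1+|\xi|^2}}\,\varphi(2^{-k}\xi)\,\hat u(\xi).
\end{equation*}
The only nontrivial ingredient is a lower bound for $\frac{|\xi|^2}{1+|\xi|^2}$ on $\operatorname{supp}\varphi(2^{-k}\cdot)$. This support lies in the annulus $\{\tfrac34 2^{k}\le|\xi|\le \tfrac83 2^{k}\}$.

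Next I would quantify the equivalence \eqref{abs} used in the proof of Lemma \ref{Bernstein_inequality}. The function $r\mapsto \frac{r^2}{1+r^2}$ is increasing, so on the annulus it is bounded below by its value at $r=\tfrac34 2^k$.

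For $k\ge 0$ this gives the bound $\frac{9/16}{1+9/16}=\frac{9}{25}$, a constant independent of $k$. Calling it $c/2$, or any smaller constant, the multiplier satisfies $e^{-t\frac{|\xi|^2}{1+|\xi|^2}}\le e^{-ct/2}$ pointwise on the support.

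For $k<0$ we have $|\xi|\le\frac83 2^{k}<\frac83$, hence $1+|\xi|^2\le 1+\frac{64}{9}$. Therefore
\begin{equation*}
\frac{|\xi|^2}{1+|\xi|^2}\ge \frac{9}{73}\,|\xi|^2\ge \frac{9}{73}\cdot\frac{9}{16}\,2^{2k},
\end{equation*}
which gives $e^{-t\frac{|\xi|^2}{1+|\xi|^2}}\le e^{-ct2^{2k}}$ with an explicit $c>0$.

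To conclude, I would multiply these pointwise bounds by $|\varphi(2^{-k}\xi)\hat u(\xi)|$, square, integrate, and apply Plancherel. This yields \eqref{Exponential_Type_Estimate_2} and \eqref{Exponential_Type_Estimate}, after taking $c$ to be the minimum of the constants from the two regimes so that a single constant works in both.

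There is no genuine obstacle. The only point needing care is bookkeeping of the constants: a single $c$ has to be chosen that fits both the form $e^{-ct/2}$ and the form $e^{-ct2^{2k}}$ and is compatible with the constant in Lemma \ref{Bernstein_inequality}. This is harmless, because a smaller $c$ only weakens both inequalities.
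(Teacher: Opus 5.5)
Your proposal is correct and follows essentially the same route as the paper: Plancherel, then a pointwise lower bound on the symbol $\frac{|\xi|^2}{1+|\xi|^2}$ over the dyadic annulus, treated separately for $k\ge0$ and $k<0$. Your explicit constants are slightly cleaner than the paper's. They give the inequality with prefactor exactly $1$, as stated, whereas the paper's proof only reaches $\lesssim$ or a prefactor $C$. They also correctly handle the fact that the annulus reaches $|\xi|$ up to $4/3>1$ when $k=-1$ and dips below $1$ when $k=0$.
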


\begin{proof}
Using Plancherel theorem and keeping in mind \eqref{P_s_definition}, we have 
\begin{eqnarray*}
\Vert e^{t\Delta P}\dot{\Delta}_ku\Vert_{L^2}&=&\Vert \widehat{e^{-
t\Delta P}\dot{\Delta}_ku}\Vert_{L^2} \\
&=&\Vert e^{- t\eta(\xi)}\widehat{\dot{\Delta}_ku}\Vert_{L^2}
\end{eqnarray*}
with $\eta(\xi)=|\xi|^2/(1+|\xi|^2)$. Now, for $k\geq 0$,using \eqref{abs}, we get 
\begin{eqnarray*}
\Vert e^{- t\eta(\xi)}\widehat{\dot{\Delta}_ku}\Vert_{L^2}&\lesssim&\Vert e^{-\frac{1}{2}t }\widehat{\dot{\Delta}_ku}%
\Vert_{L^2} 
\lesssim e^{-\frac{1}{2}t}\Vert \dot{\Delta}_ku\Vert_{L^2}.
\end{eqnarray*}
This yields \eqref{Exponential_Type_Estimate_2}. For $k<0$, we have $|\xi|\sim 2^k<1$. Hence, we obtain 
\begin{eqnarray*}
\Vert e^{- t\eta(\xi)}\widehat{\dot{\Delta}_ku}\Vert_{L^2}&\leq&\Vert e^{- t\frac{|\xi|^2}{2}}\widehat{\dot{\Delta}_ku}%
\Vert_{L^2} \\
&\leq & Ce^{-c t2^{2k}}\Vert \dot{\Delta}_k u\Vert_{L^2}.
\end{eqnarray*}
This implies \eqref{Exponential_Type_Estimate} and hence 
completes the proof of Lemma \ref{Lemma_heat_Semi_Group}. 
\end{proof}

\subsection{Besov spaces} 
\begin{definition}
For $s\in \mathbb{R}$ and $1\leq p,q\leq \infty$, the homogeneous Besov space 
$\dot{B}_{p,q}^{s}$ is defined as 
\begin{equation}
\dot{B}_{p,q}^{s}=\left\{f\in \mathcal{S}_h^\prime : \Vert f\Vert_{\dot{B}%
_{p,q}^{s}}<\infty\right\},
\end{equation}
where 
\begin{equation}
\Vert f\Vert_{\dot{B}_{p,q}^{s}}=\left\{ 
\begin{array}{ll}
\left(\displaystyle\sum_{j\in \mathbb{Z}} \left(2^{js}\Vert \dot{\Delta}_j
f\Vert_p\right)^q\right)^{1/q}, & \qquad q<\infty\vspace{0.2cm} \\ 
\displaystyle\sup_{j\in\mathbb{Z}}2^{js}\Vert \dot{\Delta}_j f\Vert_p, & 
\qquad q=\infty%
\end{array}
\right.
\end{equation}
\end{definition}



\subsection{Hybrid Besov spaces}\label{Hybrid_Besov}
Since the operator $P$ behaves differently in low and high frequencies, it is suitable to split any tempered distribution $f$ in $\dot{B}_{p,r}^{s}$
into $f=f^L+f^H$, where  
\begin{equation}
f^{L}=\sum_{k\leq 0}\dot{\Delta}_{k}f=\dot S_{1}f\qquad \text{and}\qquad
f^{H}=\sum_{k>0}\dot{\Delta}_{k}f=(I-\dot S_{1})f.
\end{equation}%
Accordingly, we introduce Besov semi-norms restricted to low and high frequencies:
\begin{equation}
\Vert f\Vert _{\dot{B}_{2,1}^{s,L}}=\sum_{k\leq 0}2^{ks}\Vert \dot{\Delta}%
_{k}f\Vert _{L^{2}}\qquad \text{and}\qquad \Vert f\Vert _{\dot{B}%
_{2,1}^{s,H}}=\sum_{k>0}2^{ks}\Vert \dot{\Delta}_{k}f\Vert _{L^{2}}.
\end{equation}%

\begin{definition}[\cite{Danchin_2001_2}]
Let $\sigma$ and $s\in \mathbb{R}$. We define the hybrid Besov space $\dot{B}%
_{2,1}^{s,\sigma}$ by
\begin{eqnarray*}
\dot{B}_{2,1}^{s,\sigma}=\{u\in \mathcal{S}_h^\prime: \Vert u\Vert_{\dot{B}%
_{2,1}^{s,\sigma}}<+\infty\}
\end{eqnarray*}
where 
\begin{eqnarray*}
\Vert u\Vert_{\dot{B}_{2,1}^{s,\sigma}}:=\Vert u\Vert _{\dot{B}_{2,1}^{s,L}}
 +\Vert u\Vert _{\dot{B}_{2,1}^{\sigma,H}}.
\end{eqnarray*}
\end{definition}

In the following, we will repeatedly use the following inequalities: for $s_1,s_2\in\R$ such that $s_1\leq s_2$, one has
\begin{equation}\label{Inequality_h_low}
\Vert u\Vert _{\dot{B}%
_{2,1}^{s_1,L}}\geq \Vert u\Vert _{\dot{B}%
_{2,1}^{s_2,L}}\quad \text{and}\quad \Vert u\Vert _{\dot{B}%
_{2,1}^{s_1,H}}\leq \Vert u\Vert _{\dot{B}%
_{2,1}^{s_2,H}}.
\end{equation}

%
%
%
%
%
%
%
%
%
%

Now, we introduce the following lemma.

\begin{lemma}
\label{Embedding_Lemma} Suppose that $\varrho>0$ and $1\leq p<2$. It holds
that 
\begin{equation}
\Vert f\Vert_{\dot{B}^{-\varrho}_{r,\infty}}\lesssim\Vert f\Vert_{L^p}
\end{equation}
with $1/p-1/r=\varrho/d$. In particular this holds with $\varrho=d/2,\, r=2$
and $p=1$.
\end{lemma}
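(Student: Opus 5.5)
The plan is to prove the embedding one dyadic block at a time. For each $j\in\mathbb{Z}$ we write $\dot{\Delta}_j f=h_j\ast f$ with $h_j(x)=2^{jd}h(2^jx)$, where $h=\mathcal{F}^{-1}\varphi\in\mathcal{S}(\mathbb{R}^d)$. The claim then amounts to the uniform bound
\begin{equation}
2^{-j\varrho}\Vert \dot{\Delta}_j f\Vert_{L^r}\lesssim \Vert f\Vert_{L^p},\qquad j\in\mathbb{Z},
\end{equation}
and taking the supremum over $j$ gives the $\dot{B}^{-\varrho}_{r,\infty}$ norm.

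To get this bound, I would apply Young's convolution inequality with exponent $m$ defined by $1+1/r=1/m+1/p$. This gives
\begin{equation}
\Vert h_j\ast f\Vert_{L^r}\leq \Vert h_j\Vert_{L^m}\Vert f\Vert_{L^p}.
\end{equation}
A change of variables yields $\Vert h_j\Vert_{L^m}=2^{jd(1-1/m)}\Vert h\Vert_{L^m}$. Since $1-1/m=1/p-1/r=\varrho/d$, this equals $2^{j\varrho}\Vert h\Vert_{L^m}$. Because $h$ is Schwartz, $\Vert h\Vert_{L^m}$ is finite for every $m\in[1,\infty]$. Multiplying by $2^{-j\varrho}$ then gives exactly the desired bound, with constant $\Vert h\Vert_{L^m}$, which is independent of $j$.

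Two routine points remain to check.

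First, the exponents must be admissible. We need $m\geq 1$, which is equivalent to $r\geq p$, and this holds because $\varrho>0$. We also need $r\leq\infty$, which holds whenever $\varrho\leq d/p$; this is implicit in the relation $1/p-1/r=\varrho/d$. The hypothesis $p<2$ is not needed for the block estimate itself; it only matters for the application the paper has in mind, where $r=2$.

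Second, $f\in L^p$ must define an element of $\mathcal{S}_h'$, that is, low frequencies must not produce a polynomial ambiguity. For $p<\infty$, $\dot S_j f\to 0$ in $L^\infty$ as $j\to-\infty$, since $\Vert \dot S_j f\Vert_{L^\infty}\lesssim 2^{jd/p}\Vert f\Vert_{L^p}$ by the same Young argument. So $f\in\mathcal{S}_h'$.

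In the special case $\varrho=d/2$, $p=1$, we get $r=2$ and $m=2$, so the bound reads $\Vert \dot{\Delta}_jf\Vert_{L^2}\leq 2^{jd/2}\Vert h\Vert_{L^2}\Vert f\Vert_{L^1}$. This can also be seen directly from Plancherel: $|\hat f|\leq\Vert f\Vert_{L^1}$ pointwise, and the annulus where $\varphi(2^{-j}\cdot)$ is supported has measure of order $2^{jd}$.

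There is no real obstacle here. The only step needing care is the scaling identity $d(1-1/m)=\varrho$, which is precisely what the relation $1/p-1/r=\varrho/d$ guarantees.
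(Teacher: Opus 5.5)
Your proof is correct and complete. The paper gives no proof of this lemma; it simply quotes it as a known embedding, so there is no internal argument to compare against.

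The usual textbook justification is the chain $\Vert \dot{\Delta}_j f\Vert_{L^r}\lesssim 2^{jd(1/p-1/r)}\Vert \dot{\Delta}_j f\Vert_{L^p}\lesssim 2^{j\varrho}\Vert f\Vert_{L^p}$. That is, Bernstein's inequality followed by the uniform $L^p$-boundedness of $\dot{\Delta}_j$. Your single Young estimate on the rescaled kernel $h_j$ is the same argument with the two steps merged, since Bernstein's inequality is itself proved by exactly this computation. It also gives the explicit, $j$-independent constant $\Vert h\Vert_{L^m}$.

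Your side remarks are accurate as well:
\begin{enumerate}
\item The hypothesis $p<2$ plays no role for general $r$. It only encodes $\varrho>0$ in the case $r=2$.
\item The relation $1/p-1/r=\varrho/d$ tacitly requires $\varrho\le d/p$, which in turn gives $m\le\infty$.
\item The bound $\Vert \dot S_j f\Vert_{L^\infty}\lesssim 2^{jd/p}\Vert f\Vert_{L^p}\to 0$ as $j\to-\infty$ shows that $f\in\mathcal{S}_h'$. So $f$ genuinely belongs to $\dot{B}^{-\varrho}_{r,\infty}$, rather than merely having a finite seminorm.
\end{enumerate}
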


The above lemma shows in particular the embedding $L^1(\mathbb{R}^d)\hookrightarrow \dot{B}%
^{-\frac d2}_{2,\infty}(\mathbb{R}^d)$. This allows us to replace the classical $L^1$ assumption (commonly used to derive decay estimates) by a weaker condition formulated in terms of  $L^2$-based negative Besov spaces. 
\section{The linear model}\label{Linear_Section}

In this section, we consider the linear model 
\begin{equation}
\left\{ 
\begin{array}{ll}
u_{t}-\Delta Pu=0, & x\in \mathbb{R}^{d},\,t>0,\vspace{0.2cm} \\ 
u\left( x,0\right) =u_{0}\left( x\right) , & x\in \mathbb{R}^{d}%
\end{array}%
\right.  \label{Main_problem_Linear}
\end{equation}
and investigate the time behavior of its solutions in Besov regularity. Our main goal is to establish decay estimates under minimal regularity on the initial data.  We also capture both the low-frequency and high-frequency behavior of the solution, which will play a crucial role later in the analysis of the optimal decay estimate of the nonlinear model \eqref{Main_problem}. 
We have the following result.

\begin{theorem}
\label{Theorem_Main_Linear} Let $\sigma\in \mathbb{R}$ and $s\in \mathbb{R}$
satisfying $\sigma+s>0$. Let $u$ be the  solution of \eqref{Main_problem_Linear} associated with an initial data $u_0\in \dot{B}^{-s}_{2,\infty}\cap \dot{B}%
^{\sigma}_{2,r}$. Then, there exists a constant $C_0$ such that 
\begin{equation}  \label{Main_Inequality_Bes}
\Vert u(t)\Vert_{\dot{B}^{-s}_{2,\infty}}\leq C_0.
\end{equation}
In addition, the following decay estimate holds: 
\begin{equation}  \label{Main_Decay_estimate}
\Vert u(t)\Vert_{\dot{B}^{\sigma}_{2,r}}\leq C (1+t)^{-\frac{\sigma+s}{2}%
}\Vert u_0 \Vert_{\dot{B}_{2,\infty}^{-s}}+Ce^{-\lambda t}\Vert u_0 \Vert_{%
\dot{B}_{2,r}^{\sigma}}, 
\end{equation}
where $\lambda$ and $C$ are positive constants.
\end{theorem}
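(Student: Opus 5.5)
The proof is entirely frequency-localized and rests on Lemma \ref{Lemma_heat_Semi_Group}. Since \eqref{Main_problem_Linear} is a Fourier multiplier equation, its solution is $u(t)=e^{t\Delta P}u_0$, and $\dot\Delta_k$ commutes with the semigroup. So $\dot\Delta_k u(t)=e^{t\Delta P}\dot\Delta_k u_0$ for every $k\in\Z$.

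\emph{Uniform bound \eqref{Main_Inequality_Bes}.} The symbol $e^{-t\eta(\xi)}$ with $\eta(\xi)=|\xi|^2/(1+|\xi|^2)\ge0$ has modulus at most $1$. Plancherel therefore gives $\Vert\dot\Delta_k u(t)\Vert_{L^2}\le\Vert\dot\Delta_k u_0\Vert_{L^2}$ for all $k$. Multiplying by $2^{-ks}$ and taking the supremum yields $\Vert u(t)\Vert_{\dot B^{-s}_{2,\infty}}\le\Vert u_0\Vert_{\dot B^{-s}_{2,\infty}}=:C_0$.

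\emph{Decay estimate \eqref{Main_Decay_estimate}.} Split the $\dot B^{\sigma}_{2,r}$ norm into the low frequencies $k<0$ and the high frequencies $k\ge0$.

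For $k\ge0$, estimate \eqref{Exponential_Type_Estimate_2} gives
$$2^{k\sigma}\Vert\dot\Delta_k u(t)\Vert_{L^2}\le e^{-ct/2}\,2^{k\sigma}\Vert\dot\Delta_k u_0\Vert_{L^2}.$$
Taking the $\ell^r$ norm over $k\ge0$ produces the term $Ce^{-\lambda t}\Vert u_0\Vert_{\dot B^{\sigma}_{2,r}}$ with $\lambda=c/2$.

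For $k<0$, estimate \eqref{Exponential_Type_Estimate} gives
$$2^{k\sigma}\Vert\dot\Delta_k u(t)\Vert_{L^2}\le 2^{k(\sigma+s)}e^{-ct2^{2k}}\,\Vert u_0\Vert_{\dot B^{-s}_{2,\infty}}.$$
It remains to show that
$$\Big(\sum_{k<0}\big(2^{k(\sigma+s)}e^{-ct2^{2k}}\big)^r\Big)^{1/r}\lesssim(1+t)^{-\frac{\sigma+s}{2}}.$$

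\emph{Main step: this dyadic sum.} Set $\alpha=\sigma+s>0$.

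For $t\le1$, bound $e^{-ct2^{2k}}\le1$. The sum is then a geometric series $\sum_{k<0}2^{k\alpha r}$, which converges precisely because $\alpha>0$; this is where the hypothesis $\sigma+s>0$ enters.

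For $t\ge1$, write
$$2^{k\alpha}e^{-ct2^{2k}}=t^{-\alpha/2}\,(t^{1/2}2^k)^{\alpha}e^{-c(t^{1/2}2^k)^2}.$$
Define $g(x)=x^{\alpha}e^{-cx^2}$. The numbers $t^{1/2}2^k$ form a dyadic sequence, so $\sup_{\tau>0}\sum_{k\in\Z}g(\tau2^k)^r$ is finite: the part with $\tau2^k\le1$ is a geometric series in $2^{k\alpha r}$, and the part with $\tau2^k>1$ is dominated by the Gaussian factor. This gives the bound $Ct^{-\alpha/2}\lesssim(1+t)^{-\alpha/2}$. For $r=\infty$, one simply uses $\sup_{x>0}g(x)<\infty$.

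Combining the low- and high-frequency contributions yields \eqref{Main_Decay_estimate}. The only point requiring care is this uniform-in-$\tau$ dyadic summation; everything else follows directly from Lemma \ref{Lemma_heat_Semi_Group}.
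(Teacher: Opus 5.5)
Your proposal is correct and matches the paper's proof step by step. Both split the frequencies at $k=0$, get the $e^{-ct/2}$ factor on high frequencies, bound the low frequencies by $2^{k(\sigma+s)}e^{-ct2^{2k}}\Vert u_0\Vert_{\dot B^{-s}_{2,\infty}}$, and sum in $\ell^r$ with the $t\le 1$ / $t\ge 1$ splitting; you make explicit the uniform dyadic summation that the paper only asserts, and you get the $\dot B^{-s}_{2,\infty}$ bound from $|e^{-t\eta(\xi)}|\le 1$ via Plancherel instead of the localized energy inequality, which is equivalent.
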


Before giving the proof of the above theorem, we make several remarks.

\begin{remark} The decay rate of the $L^p$-norms of the solutions can be recovered from \eqref{Main_Decay_estimate}. Indeed, 
using the embeddings 
\begin{equation}
\dot{B}^{\sigma}_{2,1}(\mathbb{R}^d) \hookrightarrow\dot{H}^{\sigma}(\mathbb{%
R}^d)\hookrightarrow L^p(\mathbb{R}^d),\qquad \sigma=d(1/2-1/p), \quad 2\leq
p<\infty,
\end{equation}
we get 
\begin{equation}  \label{Estimate_L_p}
\begin{aligned}
\Vert u(t)\Vert_{L^p}\leq C\Vert u(t)\Vert_{\dot{H}^{\sigma}}&\leq C\Vert
u(t)\Vert_{\dot{B}^{\sigma}_{2,1}}   \\
&\leq C(\Vert u_0 \Vert_{\dot{B}_{2,\infty}^{-s}}+\Vert u_0 \Vert_{\dot{B}%
_{2,1}^{\sigma}}) (1+t)^{-\frac{d}{2}(1/2-1/p)-s/2}.
\end{aligned}
\end{equation}
\end{remark}

\begin{remark}
For $p=2$, we recover from \eqref{Estimate_L_p} the decay estimate 
\begin{equation}  \label{Estimate_L_p_2}
\Vert u(t)\Vert_{L^2} \leq C(\Vert u_0 \Vert_{\dot{B}_{2,\infty}^{-s}}+%
\Vert u_0 \Vert_{\dot{B}_{2,1}^{0}}) (1+t)^{-s/2}.
\end{equation}
For $s=d/2$, we get the same decay estimate $(1+t)^{-d/4}$ of the $L^2$-norm
obtained in \cite{Duan_Klem_Zhu_2010}, but under the assumption $u_0\in \dot{%
{B}}_{2,\infty}^{-d/2}$ which is weaker than the $L^1$-assumption used in 
\cite{Duan_Klem_Zhu_2010} due to the embedding $L^1(\mathbb{R}%
^d)\hookrightarrow \dot{{B}}_{2,\infty}^{-d/2}(\mathbb{R}^d)$.
\end{remark}

\begin{proof}[Proof of Theorem \protect\ref{Theorem_Main_Linear}]

A natural way to get estimates for the solutions of %
\eqref{Main_problem_Linear} in Besov spaces is first to localize the
equation using  the Littlewood--Paley decomposition and to bound each
dyadic block separately in $L^{\rho}(0,T; L^p)$ spaces. The desired bounds 
are subsequently obtained by 
performing a weighted $\ell^r$ summation. Applying the
frequency-localization operator $\dot{\Delta}_k$ to %
\eqref{Main_problem_Linear}, we get 
\begin{equation}
\left\{ 
\begin{array}{ll}
\partial_t\dot{\Delta}_ku-\Delta P\dot{\Delta}_ku=0,\vspace{0.2cm} &  \\ 
\dot{\Delta}_ku\left( x,0\right) =\dot{\Delta}_ku_{0}\left( x\right). & 
\end{array}%
\right.  \label{Main_problem_k}
\end{equation}
Taking the standard inner product with $\dot{\Delta}_ku$, we obtain 
\begin{eqnarray*}
\frac{1}{2}\ddt \Vert \dot{\Delta}_ku\Vert_{L^2}^2+\Vert\nabla \sqrt{P%
}\dot{\Delta}_k u\Vert_{L^2}^2=0.
\end{eqnarray*}
Using Lemma \ref{Bernstein_inequality}, we get  
\begin{eqnarray}  \label{First_k_estimate}
\ddt\Vert \dot{\Delta}_ku\Vert_{L^2}^2+c\min(1, 2^{2k})\Vert \dot{\Delta}_k
u\Vert_{L^2}^2\leq 0.
\end{eqnarray}
The estimate \eqref{First_k_estimate} indicates a gain of two derivatives in low frequencies, as in the heat equation, while in high frequencies it yields exponential decay.

Now, for  $k<0$, multiplying \eqref{First_k_estimate}  by $2^{2k\ell}$, we obtain  
\begin{equation}
\ddt \left(2^{2k\ell}\Vert \dot{\Delta}_ku\Vert_{L^2}^2%
\right)+ c 2^{2k(\ell+1)}\Vert \dot{\Delta}_k u\Vert_{L^2}^2\leq 0.
\end{equation}
Similarly, for $k\geq 0$, we have 
\begin{equation}
\ddt \left(2^{2k\ell}\Vert \dot{\Delta}%
_ku\Vert_{L^2}^2\right)+c 2^{2k\ell} \Vert \dot{\Delta}_k
u\Vert_{L^2}^2\leq 0.
\end{equation}
Collecting the above two estimates, integrating-in-time and taking the supremum over $k\in \mathbb{Z}$ give
\begin{equation}
\Vert u(t)\Vert_{\dot{B}^{\ell}_{2,\infty}}\leq \Vert u_0\Vert_{\dot{B}%
^{\ell}_{2,\infty}}.
\end{equation}
Choosing $\ell=-s$, \eqref{Main_Inequality_Bes} holds.


To show \eqref{Main_Decay_estimate}, applying the Fourier transform to \eqref{Main_problem_k}, we get 
\begin{equation}
\left\{ 
\begin{array}{ll}
\partial_t\widehat{\dot{\Delta}_ku}+\dfrac{|\xi|^2}{1+|\xi|^2}\widehat{\dot{%
\Delta}_ku}=0,\vspace{0.2cm} &  \\ 
\widehat{\dot{\Delta}_ku}\left( \xi,0\right) =\widehat{\dot{\Delta}_ku_{0}}%
\left( \xi\right). & 
\end{array}%
\right.  \label{Main_problem_k_Fourier}
\end{equation}
Solving system \eqref{Main_problem_k_Fourier}, we obtain  
\begin{equation}
\widehat{\dot{\Delta}_ku}(\xi,t)=e^{-\eta(|\xi|)t}\widehat{\dot{\Delta}_ku}%
_0(\xi),\qquad \text{with}\qquad \eta(|\xi|)=\frac{|\xi|^2}{1+|\xi|^2}.
\end{equation}
Hence, we get (see \cite{Kawashima_1}) 
\begin{equation}\label{L_2_Est_Linear}
\left\Vert\widehat{\dot{\Delta}_ku}(\xi,t)\right\Vert_{L^2}=\left\Vert
e^{-\eta(|\xi|)t}\widehat{\dot{\Delta}_ku}_0(\xi)\right\Vert_{L^2}. 
\end{equation}
Using the definition of the localization operator, we have $|\xi|\sim 2^k$. For $k<0$, we have $\eta(\xi)\sim |\xi|^2$. Thus, for $c>0$, 
\begin{eqnarray*}
\left\Vert\widehat{\dot{\Delta}_ku}(\xi,t)\right\Vert_{L^2}^2&=&\left\Vert
e^{-\eta(|\xi|)t}\widehat{\dot{\Delta}_ku}_0(\xi)\right\Vert_{L^2}^2 \\
&=&\int_{\mathbb{R}^d}e^{-2\eta(|\xi|)t}\vert\widehat{\dot{\Delta}_ku}%
_0(\xi) \vert^2d\xi.
\end{eqnarray*}
Since $k<0$, we have
\begin{eqnarray*}
\int_{\mathbb{R}^d}e^{-2\eta(|\xi|)t}\vert\widehat{\dot{\Delta}_ku}_0(\xi)
\vert^2d\xi&\lesssim& \int_{\mathbb{R}^d}e^{-2c|\xi|^2t}\vert\widehat{\dot{%
\Delta}_ku}_0(\xi) \vert^2d\xi \\
&\lesssim& \int_{\mathbb{R}^d}\vert\widehat{\dot{\Delta}_ku}_0(\xi)
\vert^2e^{-2c(2^k\sqrt{t})^2}d\xi.
\end{eqnarray*}
This gives 
\begin{eqnarray*}
\left\Vert\widehat{\dot{\Delta}_ku}(\xi,t)\right\Vert_{L^2(\mathbb{R}%
^d)}&\lesssim&\Vert\widehat{\dot{\Delta}_ku}_0(\xi) e^{-c(2^k\sqrt{t}%
)^2}\Vert_{L^2}.
\end{eqnarray*}

Let $\sigma\in\mathbb{R}$ and $s\in \mathbb{R}$ such that $\sigma+s>0$. Multiplying the above inequality by $2^{k\sigma}$ and taking the $\ell^r(%
\mathbb{Z})$ norm for $1\leq r\leq \infty$, we get 
\begin{eqnarray}  \label{Estimate_1}
\left\Vert 2^{k\sigma}\left\Vert\widehat{\dot{\Delta}_ku}(\xi,t)\right%
\Vert_{L^2}\right\Vert_{\ell^r(\mathbb{Z})} &\lesssim &\Vert u_0 \Vert_{\dot{%
B}_{2,\infty}^{-s}}(1+t)^{-\frac{\sigma+s}{2}} \Vert(2^k\sqrt{t})^{\sigma+s}
e^{-c(2^k\sqrt{t})^2}\Vert_{\ell^r(\mathbb{Z})}  \notag \\
&\lesssim& \Vert u_0 \Vert_{\dot{B}_{2,\infty}^{-s}}(1+t)^{-\frac{\sigma+s}{2%
}}
\end{eqnarray}
where we have used the fact that 
 $\Vert(2^k\sqrt{t})^{\sigma+s} e^{-c(2^k\sqrt{t})^2}\Vert_{\ell^r(%
\mathbb{Z})}\lesssim  1$ and performed the usual $t\leq 1$ and $t\geq 1$ splitting analysis.

Now, for $k\geq 0$, we have $\eta(\xi)\sim c$. Hence, we have 
\begin{eqnarray*}
\int_{\mathbb{R}^d}e^{-2\eta(|\xi|)t}\vert\widehat{\dot{\Delta}_ku}_0(\xi)
\vert^2d\xi&\lesssim& \int_{\mathbb{R}^d}e^{-ct}\vert\widehat{\dot{\Delta}_ku%
}_0(\xi) \vert^2d\xi \\
&\lesssim&e^{-ct}\Vert\dot{\Delta}_ku_0 \Vert_{L^2}^2.
\end{eqnarray*}
This leads to 
\begin{eqnarray*}
\left\Vert\widehat{\dot{\Delta}_ku}(\xi,t)\right\Vert_{L^2(\mathbb{R}^d)}&\lesssim&e^{-\frac{c}{2}t}\Vert\widehat{\dot{\Delta}_ku}_0(\xi)
\Vert_{L^2}.
\end{eqnarray*}
Multiplying the above inequality by $2^{k\sigma}$ and taking the $%
\ell^r(\mathbb{Z})$ norm, for $1\leq r\leq \infty$, we get 
\begin{eqnarray}  \label{Estimate_2}
\left\Vert 2^{k\sigma}\left\Vert\widehat{\dot{\Delta}_ku}(\xi,t)\right%
\Vert_{L^2}\right\Vert_{\ell^r(\mathbb{Z})} &\lesssim &e^{-\frac{c}{2} t}\Vert u_0
\Vert_{\dot{B}_{2,r}^{\sigma}}. 
\end{eqnarray}
Collecting the two estimates \eqref{Estimate_1} and \eqref{Estimate_2}, and using Plancherel's identity, we deduce \eqref{Main_Decay_estimate}.
\end{proof}

\section{Global well-posedness of the nonlinear model}
\label{Section_Nonl}
In this section, we are concerned with the global-in-time well-posedness of the nonlinear system \eqref{Main_problem}. More precisely, we prove the global existence and uniqueness of
a solution at critical regularity.

 First, we should mention that the solution of %
\eqref{Main_problem} satisfies the properties 
\begin{equation}
\Vert u(t)\Vert _{L^{1}}\leq \Vert u_{0}\Vert _{L^{1}},\qquad \Vert
u(t)\Vert _{L^{\infty }}\leq \Vert u_{0}\Vert _{L^{\infty }}
\label{Contra_1}
\end{equation}%
and the conservation of mass property: 
\begin{equation}\label{Mass_Cons}
\int_{\mathbb{R}^{d}}u(x,t)dx=\int_{\mathbb{R}^{d}}u_{0}(x)dx.
\end{equation}

Our main result reads as follows:
\begin{theorem}
\label{Main_Theorem_Nonl} Let $d\geq 2$ and  $u_{0}\in \dot{B}_{2,1}^{\frac{d}{2}-1}(%
\mathbb{R}^{d})\cap \dot{B}_{2,1}^{\frac{d}{2}+1}(%
\mathbb{R}^{d})$. There exists a positive constant $\alpha$ such that if
\begin{equation}\label{alpha_condition}
\Vert u_{0}\Vert _{\dot{{B}}_{2,1}^{\frac{d}{2}-1,L}}+\Vert u_{0}\Vert _{\dot{{B}}_{2,1}^{\frac{d}{2}+1,H}}\leq \alpha,
\label{Initial_Assumption_Samll}
\end{equation}
then \eqref{Main_problem} admits a unique global-in-time solution satisfying
\begin{equation}
u\in C([0,\infty);\dot{B}_{2,1}^{\frac{d}{2}-1}(%
\mathbb{R}^{d})\cap \dot{B}_{2,1}^{\frac{d}{2}+1}(%
\mathbb{R}^{d}))\cap L^{1}((0,\infty);\dot{B}_{2,1}^{\frac{d}{2}+1}(%
\mathbb{R}^{d})).
\end{equation}
\end{theorem}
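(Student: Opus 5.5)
The proof combines a local existence result (Section~\ref{Section_Local_Existence}) with a global a priori estimate closed by a bootstrap argument. We work with the functional
\[
X(t):=\|u\|_{\widetilde L^\infty_t(\dot B^{\frac d2-1,L}_{2,1})}+\|u\|_{\widetilde L^\infty_t(\dot B^{\frac d2+1,H}_{2,1})}+\|u\|_{L^1_t(\dot B^{\frac d2+1,L}_{2,1})}+\|u\|_{L^1_t(\dot B^{\frac d2+1,H}_{2,1})}.
\]
The goal is the bound $X(t)\lesssim X(0)+X(t)^2$, valid as long as $\|u\|_{L^\infty_{t,x}}$ is small. A standard continuity argument then gives $X(t)\le 2CX(0)$ for $\alpha$ small, and hence a global solution. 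Uniqueness follows from a stability estimate on the difference of two solutions in the lower space $\dot B^{\frac d2-1}_{2,1}$ (low) $\cap\,\dot B^{\frac d2}_{2,1}$ (high). This estimate is handled like the existence estimates, with one derivative less to avoid the loss coming from the convection term.

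\textbf{Localized energy estimates.} Apply $\dot\Delta_k$ to \eqref{Main_problem} and write $\nabla\cdot f(u)=f'(u)\cdot\nabla u$. Taking the $L^2$ inner product with $\dot\Delta_k u$ and using Lemma~\ref{Bernstein_inequality} gives
\[
\frac12\ddt\|\dot\Delta_k u\|_{L^2}^2+c\min(1,2^{2k})\|\dot\Delta_k u\|_{L^2}^2\le \Big|\int \dot\Delta_k(f'(u)\cdot\nabla u)\,\dot\Delta_k u\Big|.
\]
In low frequencies ($k\le0$) we bound the right-hand side crudely by $\|\dot\Delta_k(f'(u)\cdot\nabla u)\|_{L^2}\|\dot\Delta_k u\|_{L^2}$. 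Dividing by $\|\dot\Delta_k u\|_{L^2}$, multiplying by $2^{k(\frac d2-1)}$ and summing yields
\[
\|u\|_{L^\infty_t\dot B^{\frac d2-1,L}}+\|u\|_{L^1_t\dot B^{\frac d2+1,L}}\lesssim \|u_0\|_{\dot B^{\frac d2-1,L}}+\|\nabla\cdot f(u)\|_{L^1_t\dot B^{\frac d2-1,L}}.
\]
Since $\|\nabla\cdot f(u)\|_{\dot B^{\frac d2-1,L}}\lesssim\|f(u)\|_{\dot B^{\frac d2}_{2,1}}$, and $f(u)$ is at least quadratic by \eqref{Assumtion_f}, the composition and product estimates of Appendix~\ref{Appendix_A} give $\|f(u)\|_{\dot B^{\frac d2}_{2,1}}\lesssim\|u\|_{\dot B^{\frac d2}_{2,1}}^2$. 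We then interpolate: $\|u\|_{L^2_t\dot B^{\frac d2}_{2,1}}^2\lesssim X^2$, using that $\dot B^{\frac d2}$ lies between $\frac d2-1$ and $\frac d2+1$ at low frequencies, and that at high frequencies $\dot B^{\frac d2+1,H}$ controls $\dot B^{\frac d2,H}$ in both $L^\infty_t$ and $L^1_t$.

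\textbf{High frequencies: the main obstacle.} For $k>0$ the damping is only of order $\|\dot\Delta_k u\|^2$ and gives no derivative gain. The convection term therefore costs a derivative, and we must use the transport structure through a commutator. Write
\[
\dot\Delta_k(f'(u)\cdot\nabla u)=f'(u)\cdot\nabla\dot\Delta_k u+[\dot\Delta_k,f'(u)]\cdot\nabla u .
\]
Integrating by parts, the first term contributes $-\frac12\int \mathrm{div}(f'(u))|\dot\Delta_k u|^2$, which is bounded by $\|\nabla u\|_{L^\infty}\|\dot\Delta_k u\|^2_{L^2}$. The commutator estimate of Appendix~\ref{Appendix_A} gives
\[
\sum_{k>0}2^{k(\frac d2+1)}\|[\dot\Delta_k,f'(u)]\cdot\nabla u\|_{L^2}\lesssim \|\nabla f'(u)\|_{\dot B^{\frac d2}_{2,1}}\|u\|_{\dot B^{\frac d2+1}_{2,1}}.
\]
Multiplying by $2^{k(\frac d2+1)}$ and summing over $k>0$ yields
\[
\|u\|_{L^\infty_t\dot B^{\frac d2+1,H}}+\|u\|_{L^1_t\dot B^{\frac d2+1,H}}\lesssim\|u_0\|_{\dot B^{\frac d2+1,H}}+\int_0^t\|u\|_{\dot B^{\frac d2+1}_{2,1}}\|u\|_{\dot B^{\frac d2+1}_{2,1}}\,d\tau.
\]
The integral is bounded by $\|u\|_{L^\infty_t\dot B^{\frac d2+1}}\,\|u\|_{L^1_t\dot B^{\frac d2+1}}\lesssim X^2$, since the low-frequency $\dot B^{\frac d2+1}$ norm is dominated by the $\dot B^{\frac d2-1}$ norm. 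The delicate points are the following.

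\begin{itemize}
\item We need the commutator estimate at the top regularity $\frac d2+1$ with one factor only in $L^1_t$. This is where the choice $\dot B^{\frac d2+1,H}$ for the high frequencies, rather than $\frac d2$, is needed.
\item We need $\|\nabla u\|_{L^\infty}\lesssim\|u\|_{\dot B^{\frac d2+1}_{2,1}}\in L^1_t$.
\item The $L^\infty$ smallness needed for the composition estimates follows from $\dot B^{\frac d2}_{2,1}\hookrightarrow L^\infty$ and interpolation.
\end{itemize}

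Adding the low- and high-frequency estimates closes $X\lesssim X(0)+X^2$. Time continuity of the solution follows from the equation and the $\widetilde L^\infty_t$ bounds.
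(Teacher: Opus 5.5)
Your proposal is correct and follows essentially the paper's proof. It treats the nonlinearity as a source term at low frequencies, uses the commutator $[f'(u),\dot\Delta_k]\nabla u$ plus integration by parts at high frequencies, controls the quadratic terms via the interpolation $\|u\|_{L^2_t(\dot B^{d/2}_{2,1})}^2\lesssim \|u\|_{L^\infty_t(\dot B^{d/2-1}_{2,1})}\|u\|_{L^1_t(\dot B^{d/2+1}_{2,1})}$, and closes with $X\lesssim X(0)+X^2$, a continuity argument and the local theory. The only minor deviation is at low frequencies: you bound the source term with the algebra/composition estimate $\|f(u)\|_{\dot B^{d/2}_{2,1}}\lesssim\|u\|_{\dot B^{d/2}_{2,1}}^2$ instead of the low-frequency composition Lemma~\ref{Lemma_composition_L}, which is a harmless and slightly simpler substitute, since the paper ends up controlling the same $L^2_t(\dot B^{d/2}_{2,1})$ quantity anyway.
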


\begin{remark}
\label{Remark_Compa} A global existence result for small data $u_0\in H^{s}(\mathbb{R}^{d})$ with $s\geq 2[d/2]+2$  was established in \cite%
{Duan_Klem_Zhu_2010}. In contrast, the global existence result stated in Theorem \ref{Main_Theorem_Nonl} is obtained in a hybrid Besov framework requiring only the smallness in the spaces  
\begin{equation}
\dot{B}_{2,1}^{\frac{d}{2}-1}(%
\mathbb{R}^{d})\cap \dot{B}_{2,1}^{\frac{d}{2}+1}(%
\mathbb{R}^{d})
\end{equation} By the embedding 
$
\dot{H}^{2[d/2]+2}\hookrightarrow \dot{{B}}%
_{2,1}^{d/2+1},
$
it follows that the regularity requirement in Theorem \ref{Main_Theorem_Nonl} is weaker than that in \cite{Duan_Klem_Zhu_2010}. 
\end{remark}

\begin{proof}[Proof of Theorem \protect\ref{Main_Theorem_Nonl}]
The proof relies on an energy method in hybrid Besov spaces. More precisely, we first localize 
the equation by means of dyadic decomposition and treat the low-frequencies and high-frequencies regimes separately. This frequency splitting is motivated by the structure of the nonlocal linear operator $\Delta P$, which, at low frequencies, yields a parabolic-type smoothing effect, whereas at high frequencies, it behaves essentially as a linear damping operator. Accordingly, the low-frequency part of the solution is estimated in the Besov spaces  $\dot{{B}}_{2,1}^{\frac{d}{2}-1}$ and the high-frequency part is controlled in $\dot{{B}}_{2,1}^{\frac{d}{2}+1}$ so as to recover $L^1_T(\dot{W}^{1,\infty})$ bounds on the solution. The nonlinearities are handled via suitable commutator estimates. After obtaining the desired a priori estimates, we use a standard continuity argument to prove the global existence. Similar arguments have also been employed in \cite{Crin-Bara_Danchin_2021,CRINBARAT20221,Danchin_EMS} to show global existence for some classes of hyperbolic systems of balance laws.

To  prove the \emph{a priori} bound, 
we define
\begin{equation}
\mathcal{E}(t):=\Vert u(t)\Vert_{L^\infty_t(\dot{{B}}_{2,1}^{\frac{d}{2}-1,L})}+\Vert u(t)\Vert_{L^\infty_t(\dot{{B}}_{2,1}^{\frac{d}{2}+1,H})},\qquad \mathcal{D}(t):= \|u\|_{L_t^1(\dot{{B}}_{2,1}^{\frac{d}{2}+1})}
\end{equation}
 and 
 \begin{equation}
\Vert u(t)\Vert_X:= \mathcal{E}(t)+\mathcal{D}(t).
\end{equation}
We also define
\begin{equation}
\|u_0\|_{X}=\Vert u_{0}\Vert _{%
\dot{B}_{2,1}^{\frac{d}{2}-1,L}}+\Vert
u_{0}\Vert _{\dot{{B}}_{2,1}^{\frac{d}{2}+1 ,H}}.
\end{equation}
and 
\begin{equation}
\|u\|_{\mathcal{X}}=\Vert u\Vert _{%
\dot{B}_{2,1}^{\frac{d}{2}-1,L}}+\Vert
u\Vert _{\dot{{B}}_{2,1}^{\frac{d}{2}+1 ,H}}.
\end{equation}
\textbf{Low-frequency a priori estimates:}
Applying the frequency operator $\dot{\Delta}_{k},\,k\in \mathbb{Z}$ to %
\eqref{Main_problem}, we get 
\begin{equation}
\left\{ 
\begin{array}{ll}
\partial _{t}\dot{\Delta}_{k}u-\Delta P\dot{\Delta}_{k}u+\dot{\Delta}%
_{k}(\nabla \cdot f(u))=0,\vspace{0.2cm} &  \\ 
\dot{\Delta}_{k}u\left( x,0\right) =\dot{\Delta}_{k}u_{0}\left( x\right) . & 
\end{array}%
\right.  \label{Main_N_problem_k}
\end{equation}%
As for \eqref{First_k_estimate}, taking the inner product in $L^{2}$ of the
first equation in \eqref{Main_N_problem_k} with $\dot{\Delta}_{k}u$, we get,
for $k<0$, 
\begin{equation}
\frac{1}{2}\ddt\Vert \dot{\Delta}_{k}u\Vert _{L^{2}}^{2}+c2^{2k}\Vert \dot{%
\Delta}_{k}u\Vert _{L^{2}}^{2}\leq \int_{\mathbb{R}^{d}}\dot{\Delta}%
_{k}(\nabla \cdot f(u))\dot{\Delta}_{k}udx.
\end{equation}
We have  
\begin{eqnarray}  \label{First_k_estimate_Non}
\left\vert \int_{\mathbb{R}^{d}}\dot{\Delta}_{k}(\nabla \cdot f(u))\dot{%
\Delta}_{k}udx\right\vert &\leq &\int_{\mathbb{R}^{d}}|\dot{\Delta}%
_{k}(\nabla \cdot f(u))\dot{\Delta}_{k}u|dx  \notag \\ 
&\leq &\Vert \dot{\Delta}_{k}u\Vert _{L^{2}}\Vert \dot{\Delta}_{k}(\nabla \cdot f(u))\Vert _{L^{2}}.
\end{eqnarray}%
Hence, using Bernstein inequality, \eqref{First_k_estimate_Non} becomes
\begin{equation}  \label{First_k_estimate_Non_2_1}
\frac{1}{2}\ddt\Vert \dot{\Delta}_{k}u\Vert _{L^{2}}^2+2^{2k}\Vert \dot{\Delta}%
_{k}u\Vert _{L^{2}}^2\leq C 2^k\Vert \dot{\Delta}_{k}u\Vert _{L^{2}}\Vert \dot{\Delta}_{k}( f(u))\Vert
_{L^{2}}.  
\end{equation}%
Due to the presence of the prefactor $2^k$ in the second term on the right-hand side of \eqref{First_k_estimate_Non_2_1}, it is suitable to separate the regularity exponents of low and high frequencies. 
 This motivates the introduction of the hybrid Besov spaces in Section \ref{Hybrid_Besov}. 

Using Lemma \ref{Lemma_Diff_Ineq}, and   multiplying  \eqref{First_k_estimate_Non_2_1} by $2^{ks}$ ($k<0$),  
summing in $k$,  we get
\begin{equation}\label{Estimate_main_low}
\Vert u(t)\Vert_{\dot{{B}}_{2,1}^{s,L}}+\int_0^t\Vert u(s)\Vert
_{\dot{{B}}_{2,1}^{s+2,L}}\ds\lesssim \Vert u_{0}\Vert _{%
\dot{B}_{2,1}^{s,L}}+\int_0^t\Vert  f(u)\Vert _{\dot{B}%
_{2,1}^{s+1,L}}\ds.  
\end{equation}%

%
%
%
%
%
%
%
%
%
%
%
%
%
%
%
%
Applying Lemma \ref{Lemma_composition_L}, we obtain
\begin{equation}
\begin{aligned}
&\Vert u(t)\Vert_{\dot{{B}}_{2,1}^{s,L}}+\int_0^t\Vert u(s)\Vert
_{\dot{{B}}_{2,1}^{s+2,L}}\ds\\
\lesssim&\, \Vert u_{0}\Vert _{%
\dot{B}_{2,1}^{s,L}}+\int_0^t (\|u\|_{\dot{B}%
_{2,1}^{\frac{d}{2},L}}+\|u\|_{\dot{B}%
_{2,1}^{\frac{d}{2},H}})\|u\|_{\dot{B}%
_{2,1}^{s+1,L}}\ds\\
&+\int_0^t(\|u\|_{\dot{B}%
_{2,1}^{\frac{d}{2}-1,L}}+\|u\|_{\dot{B}%
_{2,1}^{\frac{d}{2},H}})\|u\|_{\dot{B}%
_{2,1}^{\sigma,H}}\ds \label{Low_Fre_Est_1}
\end{aligned}
\end{equation}%
for some $\sigma\in \R$. 
\smallbreak
 \textbf{High-frequency a priori estimates:}
 In the high-frequency regime, we cannot consider the nonlinearities as source terms, since this would lead to a loss of one derivative. To prevent this,
we rely on commutator estimates.

%
%
We rewrite equation \eqref{Main_N_problem_k} in the form  
\begin{equation}
\begin{array}{ll}
\partial _{t}\dot{\Delta}_{k}u-\Delta P\dot{\Delta}_{k}u+ f'(u)\nabla \dot{\Delta}%
_{k}u=[f'(u), \dot{\Delta}_{k} ]\nabla u. 
\end{array}%
  \label{Main_N_problem_k_comm}
\end{equation}

For $k\geq 0$, multiplying \eqref{Main_N_problem_k_comm} by $\dot{\Delta}_{k}u$ and  taking the $L^2$ inner product, integrating by parts and using Lemma \ref{Bernstein_inequality}, we obtain 
\begin{equation} 
\begin{aligned} \label{First_k_estimate_Non_2_2}
&\frac{1}{2}\ddt\Vert \dot{\Delta}_{k}u\Vert _{L^{2}}^2+\Vert \dot{\Delta}%
_{k}u\Vert _{L^{2}}^2\\
\leq&\, C\Vert \dot{\Delta}_{k}u\Vert _{L^{2}}\left(\|\nabla\cdot f^{\prime}(u)\|_{L^\infty}\| \dot{\Delta}_{k} u\|_{L^2}+\Vert [f'(u), \dot{\Delta}_{k} ]\nabla u\Vert
_{L^{2}}\right)
\end{aligned}
\end{equation}
where we have used the fact that 
\begin{equation}
\int_{\R^d}f'(u)\nabla \dot{\Delta}%
_{k}u  \dot{\Delta}%
_{k}u \dx=-\frac{1}{2}\int_{\R^d} \nabla\cdot f^{\prime}(u) |\dot{\Delta}_{k} u|^2\dx. 
\end{equation}
Using the embedding $\dot{B}_{2,1}^{d/2} (\R^d)\hookrightarrow  L^\infty (\R^d)$ we have  
\begin{equation}\label{Gra_Estima_1}
\|\nabla\cdot f^{\prime}(u)\|_{L^\infty}\| \dot{\Delta}_{k} u\|_{L^2}\lesssim c_k2^{-\sigma k}\|\nabla\cdot f^\prime(u)\|_{\dot{B}_{2,1}^{d/2}}\Vert u\Vert_{\dot{B}_{2,1}^\sigma} 
\end{equation}

To estimate the commutator term in \eqref{First_k_estimate_Non_2_2}, we use Lemma \ref{Commutator_estimate}, to get 
\begin{equation}\label{Grad_2}
\Vert [f'(u), \dot{\Delta}_{k} ]\nabla u\Vert
_{L^{2}}\lesssim c_k2^{-k\sigma}\|\nabla \cdot f'(u)\|_{\dot{B}%
_{2,1}^{d/2}}\|u\|_{\dot{B}%
_{2,1}^{\sigma}}
\end{equation}

Applying Lemma \ref{Lemma_Diff_Ineq} and multiplying \eqref{First_k_estimate_Non_2_2} 
by $2^{k\sigma }$  and summing in $k$, we get 
\begin{equation}\label{Higher_Order_Estimate_M_1}
\begin{aligned}
\Vert u(t)\Vert _{\dot{{B}}_{2,1}^{\sigma ,H}}+\int_0^t\Vert
u\Vert _{\dot{{B}}_{2,1}^{\sigma ,H}}\ds
\lesssim \Vert
u_{0}\Vert _{\dot{{B}}_{2,1}^{\sigma ,H}}
+ \int_0^t \|\nabla\cdot  f'(u)\|_{\dot{B}%
_{2,1}^{d/2}}\|u\|_{\dot{B}%
_{2,1}^{\sigma}}\ds
\end{aligned}
\end{equation}
Applying Lemma \ref{Composition_classical}, we have 
\begin{equation}\label{Grad_Est_3}
\|\nabla\cdot  f'(u)\|_{\dot{B}%
_{2,1}^{d/2}}\lesssim C(\|u\|_{L^\infty}) \|u\|_{\dot{B}%
_{2,1}^{d/2+1}}
\end{equation}
Inserting this into \eqref{Higher_Order_Estimate_M_1}, we obtain
\begin{equation}\label{Higher_Order_Estimate_M_2}
\begin{aligned}
\Vert u(t)\Vert _{\dot{{B}}_{2,1}^{\sigma ,H}}+\int_0^t\Vert
u\Vert _{\dot{{B}}_{2,1}^{\sigma ,H}}\ds
\lesssim \Vert
u_{0}\Vert _{\dot{{B}}_{2,1}^{\sigma ,H}}
+ \int_0^t \|u\|_{\dot{B}%
_{2,1}^{\frac{d}{2}+1}}\|u\|_{\dot{B}%
_{2,1}^{\sigma}}\ds
\end{aligned}
\end{equation}
\textbf{Gathering the estimates:}
Taking $\sigma=\frac{d}{2}+1$ and $s=\frac{d}{2}-1$, from \eqref{Higher_Order_Estimate_M_2} and \eqref{Low_Fre_Est_1}, we obtain 
\begin{equation}\label{Estimate_bootstrap}
\begin{aligned}
\mathcal{E}(t)+\mathcal{D}(t)\lesssim&\, \|u_0\|_{X}+\int_0^t (\|u\|_{\dot{B}%
_{2,1}^{\frac{d}{2},L}}+\|u\|_{\dot{B}%
_{2,1}^{\frac{d}{2}+1,H}})\|u\|_{\dot{B}%
_{2,1}^{\frac{d}{2},L}}\ds\\
&+\int_0^t(\|u\|_{\dot{B}%
_{2,1}^{\frac{d}{2}-1,L}}+\|u\|_{\dot{B}%
_{2,1}^{\frac{d}{2}+1,H}})\|u\|_{\dot{B}%
_{2,1}^{\frac{d}{2}+1,H}}\ds.
\end{aligned}
\end{equation}
We have
\begin{equation}\label{Estimate_integ}
 \begin{aligned}
&\int_0^t \|u\|_{\dot{B}%
_{2,1}^{\frac{d}{2}+1,H}}\|u\|_{\dot{B}%
_{2,1}^{\frac{d}{2}-1,L}}
+(\|u\|_{\dot{B}%
_{2,1}^{\frac{d}{2}-1,L}}+\|u\|_{\dot{B}%
_{2,1}^{\frac{d}{2}+1,H}})\|u\|_{\dot{B}%
_{2,1}^{\frac{d}{2}+1,H}}
\lesssim \mathcal{E}(t)\mathcal{D}(t). 
\end{aligned}  
\end{equation}
Now, we need to estimate the second term on the right-hand side of \eqref{Estimate_bootstrap}. Using
\begin{equation}
\|u\|_{\dot{B}%
_{2,1}^{\frac{d}{2},L}}\lesssim \|u\|_{\dot{B}%
_{2,1}^{\frac{d}{2}}}
\end{equation}
together with the interpolation inequality  
\begin{equation}\label{Interpolation_ineq}
\|u\|_{L_t^2(\dot{B}^{\frac{d}{2}}_{2,1})} 
\lesssim 
\sqrt{\, \|u\|_{L_t^\infty(\dot{B}^{\frac{d}{2}-1}_{2,1})} 
      \, \|u\|_{L_t^1(\dot{B}^{\frac{d}{2}+1}_{2,1})} },
\end{equation}
we obtain 
\begin{equation}\label{Estimate_Term_low}
\begin{aligned}
\int_0^t\|u(s)\|_{B^{\frac{d}{2},L}}^2\ds \lesssim&\, \|u\|_{L_t^2(\dot{B}^{\frac{d}{2}}_{2,1})}^2\\
\lesssim &\,\|u\|_{L_t^\infty(\dot{B}^{\frac{d}{2}-1}_{2,1})} 
      \, \|u\|_{L_t^1(\dot{B}^{\frac{d}{2}+1}_{2,1})}\\
    \lesssim &\, (\|u\|_{L_t^\infty(\dot{B}^{\frac{d}{2}-1,L}_{2,1})}+ \|u\|_{L_t^\infty(\dot{B}^{\frac{d}{2}-1,H}_{2,1})})\|u\|_{L_t^1(\dot{B}^{\frac{d}{2}+1}_{2,1})}\\
 \lesssim &\, (\|u\|_{L_t^\infty(\dot{B}^{\frac{d}{2}-1,L}_{2,1})}+ \|u\|_{L_t^\infty(\dot{B}^{\frac{d}{2}+1,H}_{2,1})})\|u\|_{L_t^1(\dot{B}^{\frac{d}{2}+1}_{2,1})} \\
 \lesssim \,&\mathcal{E}(t)\mathcal{D}(t),   
\end{aligned}
\end{equation}
where we have used \eqref{Inequality_h_low}.

Inserting \eqref{Estimate_integ} and \eqref{Estimate_Term_low} into \eqref{Estimate_bootstrap}, we obtain 
 \begin{equation}\label{Estimate_bootstrap_main}
\begin{aligned}
\mathcal{E}(t)+\mathcal{D}(t)\lesssim&\, \|u_0\|_{X}+\mathcal{E}(t)\mathcal{D}(t). 
\end{aligned}
\end{equation}
From \eqref{Estimate_bootstrap_main}, we conclude in a standard way that there exists $\alpha>0$ small enough such that if $\|u_0\|_{X}\leq \alpha $, then there exists a $K>0$, independent of $T$,  such that
\begin{equation}\label{Main_Estimate}
\mathcal{E}(t)+\mathcal{D}(t)\leq K\|u_0\|_{X},
\end{equation}
for all $t\in [0,T]$. This uniform estimate allows us to continue the local solution to $T=+\infty$.

\end{proof}
\section{Decay estimates}\label{Section_Decay}

In this section, we investigate the large-time behavior of the global solution constructed in Theorem \ref{Main_Theorem_Nonl}. Our main goal is to derive optimal decay rates for the solution in both $L^2$ and suitable Besov norms. We first establish in Section \ref{Section_Decay_rate_1} a decay estimate of the $L^2$-norm of the solution under the additional assumption $u_0\in \dot{{B}}_{2,\infty}^{-d/2}(\mathbb{R}^d) $. This allows us to recover the decay rate $(1+t)^{-d/4}$ which is consistent with the decay rate of the heat kernel.   In Section \ref{Section_Decay_rate_Besov}, we extend this result to higher order norms and derive decay estimates in both low-frequency and high-frequency Besov spaces. These decay estimates reflect the different smoothing effect of the operator $\Delta P$ and provide a refined description of the asymptotic behavior of the solution.    In Section \ref{Optimal_Decay_Section}, we address the optimality of the decay rate and show under the assumption $\int_{\R^d} u_0(x)\dx\neq 0$, using the Fourier splitting method \cite{Schonb_1991}, that the decay rate obtained in Section \ref{Section_Decay_rate_1} is optimal and cannot be improved. 
Finally, in Section \ref{Section_Improved_Decay}, we derive an improved decay estimate  for initial data satisfying $\int_{\R^d}u_0(x)\dx=0$ and $u_0\in L^1(\R^d, (1+|x|)\dx)$. For such data, the decay rate is further enhanced by an additional factor of  $t^{-1/2}$.

\subsection{Decay estimates for $u_0\in L^2(\R^d)\cap \dot{{B}}_{2,\infty}^{-d/2}(\mathbb{R}^d)$}\label{Section_Decay_rate_1}
In this section, we prove the decay estimates of the $L^2$ norm of the solution, for initial data $u_0\in L^2(\R^d)\cap \dot{{B}}_{2,\infty}^{-d/2}(\mathbb{R}^d)$.
 
More precisely, we have the following decay estimate.  
\begin{theorem}\label{theorem_Decay}
Let $u$ be the global solution of \eqref{Main_problem} obtained in Theorem \ref{Main_Theorem_Nonl}. Assume in addition that  $u_0\in \dot{{B}}_{2,\infty}^{-d/2}(\mathbb{R}^d)$. Then, it holds that 
\begin{equation}\label{Estimate_Decay_Main}
\|u(t)\|_{L^2}\lesssim \|u_0\|_{L^2\cap \dot{{B}}_{2,\infty}^{-d/2}(\mathbb{R}^d)}(1+t)^{-\frac{d}{4}}. 
\end{equation}

\end{theorem}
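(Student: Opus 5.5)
The plan has two parts. First, derive a Lyapunov-type inequality for $\|u\|_{L^2}^2$ that is damped by a Nash-type inequality adapted to $\Delta P$. Second, show that the $\dot B^{-d/2}_{2,\infty}$ norm of $u(t)$ stays bounded along the flow. The outline section announces that the decay follows from a modified Nash-type inequality, and this is what closes the argument.

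\textbf{Step 1 (energy identity).} Multiply \eqref{Main_problem} by $u$ and integrate. Since $\nabla\cdot f(u)\,u=\nabla\cdot F(u)$ with $F'(u)=f'(u)u$, the convection term vanishes. This gives
\[
\frac12\ddt\|u\|_{L^2}^2+\|\nabla\sqrt P u\|_{L^2}^2=0,\qquad \|\nabla\sqrt P u\|_{L^2}^2=\int_{\R^d}\frac{|\xi|^2}{1+|\xi|^2}|\hat u|^2\,d\xi .
\]
For the \textbf{modified Nash inequality}, split the frequencies at a radius $R\le 1$. On $|\xi|\ge R$ we have $\frac{|\xi|^2}{1+|\xi|^2}\ge R^2/2$. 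On $|\xi|\le R$ we bound by dyadic blocks: $\int_{|\xi|\le R}|\hat u|^2\lesssim\sum_{2^k\lesssim R}\|\dot\Delta_k u\|_{L^2}^2\lesssim \sum_{2^k\lesssim R}2^{kd}\|u\|_{\dot B^{-d/2}_{2,\infty}}^2\lesssim R^d\|u\|^2_{\dot B^{-d/2}_{2,\infty}}$. Combining the two regions,
\[
\|u\|_{L^2}^2\lesssim R^{-2}\|\nabla\sqrt Pu\|_{L^2}^2+R^d\|u\|_{\dot B^{-d/2}_{2,\infty}}^2 .
\]
Now choose $R^2=\varepsilon(1+t)^{-1}$ (Fourier-splitting style) with $\varepsilon$ small and fixed. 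Inserting into the energy identity gives $\ddt\|u\|_{L^2}^2+\frac{c\varepsilon}{1+t}\|u\|_{L^2}^2\lesssim (1+t)^{-1-d/2}\sup_{s\le t}\|u(s)\|^2_{\dot B^{-d/2}_{2,\infty}}$. Take $c\varepsilon>d/2$, multiply by $(1+t)^{c\varepsilon}$ and integrate. This yields $\|u(t)\|_{L^2}^2\lesssim (1+t)^{-d/2}\big(\|u_0\|_{L^2}^2+\sup_{s\le t}\|u(s)\|^2_{\dot B^{-d/2}_{2,\infty}}\big)$. So it remains to bound the negative Besov norm uniformly in time.

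\textbf{Step 2 (propagation of $\dot B^{-d/2}_{2,\infty}$): the main obstacle.} Localize as in \eqref{Main_N_problem_k} and use \eqref{First_k_estimate}-type dissipation, discarding it. This gives $\|u(t)\|_{\dot B^{-d/2}_{2,\infty}}\le\|u_0\|_{\dot B^{-d/2}_{2,\infty}}+\int_0^t\|\nabla\cdot f(u)\|_{\dot B^{-d/2}_{2,\infty}}\ds$. Since $f$ is at least quadratic by \eqref{Assumtion_f}, write $\nabla\cdot f(u)=f'(u)\cdot\nabla u$. I would use the product law $\|ab\|_{\dot B^{-d/2}_{2,\infty}}\lesssim\|a\|_{L^2}\|b\|_{L^2}$, which follows from $L^1\hookrightarrow\dot B^{-d/2}_{2,\infty}$ (Lemma \ref{Embedding_Lemma}), with $a=f'(u)/u$-type factors handled by composition. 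The result is $\|\nabla\cdot f(u)\|_{\dot B^{-d/2}_{2,\infty}}\lesssim C(\|u\|_{L^\infty})\|u\|_{L^2}\|\nabla u\|_{L^2}$.

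The difficulty is time integrability. I would split $\nabla u$ into low and high frequencies. For the high part, $\|\nabla u^H\|_{L^2}\lesssim\|u\|_{\dot B^{d/2+1,H}_{2,1}}$, which lies in $L^1_t$ by Theorem \ref{Main_Theorem_Nonl}. For the low part, use $\|\nabla u^L\|_{L^2}\lesssim\|\nabla\sqrt Pu\|_{L^2}$, which lies in $L^2_t$ by the energy identity; pairing it with $\|u\|_{L^2}\lesssim (1+s)^{-d/4}$ then needs $d/4>1/2$ in $L^2_t$. Alternatively, and more robustly, I would set up a bootstrap. Define $N(t)=\sup_{s\le t}\big((1+s)^{d/4}\|u(s)\|_{L^2}+\|u(s)\|_{\dot B^{-d/2}_{2,\infty}}\big)$. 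Step 1 gives $N\lesssim \|u_0\|_{L^2\cap\dot B^{-d/2}_{2,\infty}}+\|u_0\|_{\dot B^{-d/2}_{2,\infty}}+\int_0^t\|u\|_{L^2}\|\nabla u\|_{L^2}$. The integral is then bounded by $N(t)\cdot\big(\|u\|_{L^1_t(\dot B^{d/2+1,H})}+\|(1+s)^{-d/4}\|_{L^2_t}\|\nabla\sqrt Pu\|_{L^2_t(L^2)}\big)$, which is at most $N(t)\,\mathcal O(\alpha)$ by the smallness \eqref{Main_Estimate} when $d\ge3$. For $d=2$, $(1+s)^{-1/2}\notin L^2$, and I would instead interpolate $\|\nabla u^L\|_{L^2}$ between $\dot B^{-d/2}_{2,\infty}$ and the decaying $L^1_t(\dot B^{d/2+1,L})$ dissipation, or bound the low part using the $\dot B^{d/2-1,L}$-type decay to gain an extra $(1+s)^{-1/2}$. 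This $d=2$ borderline is where I expect the real work. Closing the bootstrap by smallness of $\alpha$ yields \eqref{Estimate_Decay_Main}.
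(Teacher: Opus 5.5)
\textbf{Step 1.} Your Step 1 is essentially the paper's argument. The bound $\int_{|\xi|\le R}|\hat u|^2\,d\xi\lesssim R^d\|u\|^2_{\dot B^{-d/2}_{2,\infty}}$ is Step 1 of Lemma \ref{Lemma_Nash}, and Fourier splitting with $R^2=\varepsilon(1+t)^{-1}$ is equivalent to the paper's $(1+t)^{\alpha}$-weighted energy estimate with $\alpha>d/2$. One slip: you need $\varepsilon$ \emph{large}, not small, so that $c\varepsilon>d/2$. Take $t\ge\varepsilon$ so that $R\le 1$, and use $\|u(t)\|_{L^2}\le\|u_0\|_{L^2}$ for earlier times.

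\textbf{Step 2: the gap.} You make the propagation of $Y(t):=\sup_{s\le t}\|u(s)\|_{\dot B^{-d/2}_{2,\infty}}$ depend on the $L^2$ decay, and this does not close as written.
\begin{itemize}
\item For $d=2$, which the theorem covers since Theorem \ref{Main_Theorem_Nonl} is stated for $d\ge2$, the term $\int_0^t\|u\|_{L^2}\|\nabla u^L\|_{L^2}\ds$ is not controlled, because $(1+s)^{-1/2}\notin L^2(\R_+)$. You leave this case open, and your suggestions do not fix it as stated. Interpolating only $\nabla u^L$ while keeping the decay bound for $\|u\|_{L^2}$ gives a contribution of order $Y^{4/3}\mathcal D^{2/3}$. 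This is superlinear in $Y$ and closes only if $\alpha$ is small relative to $\|u_0\|_{\dot B^{-1}_{2,\infty}}$, which is not assumed. Your other suggestion uses the decay of Theorem \ref{Theorem_Decay_2}, which already relies on the propagation you are trying to prove.
\item For $d\ge3$, the claim $\|\nabla\sqrt Pu\|_{L^2_t(L^2)}=\mathcal O(\alpha)$ is wrong. The energy identity only bounds this quantity by $\|u_0\|_{L^2}/\sqrt2$, and \eqref{Main_Estimate} does not help: smallness in $\dot B^{d/2-1,L}_{2,1}$ does not control the low-frequency $L^2$ mass when $d/2-1>0$. This part is repairable, since the inequality for $Y$ is linear and Gronwall closes it without smallness.
\end{itemize}

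\textbf{The missing idea.} Propagate the negative norm through a \emph{linear} Gronwall inequality whose coefficient is in $L^1(\R_+)$ and involves no decay at all.
\begin{itemize}
\item The paper (Lemma \ref{negative_Besov_Norm}) uses the transport structure $\partial_t\dot\Delta_k u-\Delta P\dot\Delta_k u+f'(u)\cdot\nabla\dot\Delta_k u=[f'(u),\dot\Delta_k]\cdot\nabla u$. It integrates the transport term by parts and applies the endpoint commutator estimate \eqref{Commu_s} with $s=-d/2$ in $\dot B_{2,\infty}$. This gives $\|u(t)\|_{\dot B^{-d/2}_{2,\infty}}\le C\|u_0\|_{\dot B^{-d/2}_{2,\infty}}\exp\bigl(C\int_0^t\|u\|_{\dot B^{d/2+1}_{2,1}}\ds\bigr)$, which is uniformly bounded by Theorem \ref{Main_Theorem_Nonl}.
\item Your own $L^1$-source approach also works for every $d\ge2$ if you interpolate \emph{both} factors between $\dot B^{-d/2}_{2,\infty}$ and $\dot B^{d/2+1}_{2,\infty}$, with exponents $\frac{d+2}{2(d+1)}$ and $\frac{d}{2(d+1)}$ exchanged:
\[
\|u\|_{L^2}\|\nabla u\|_{L^2}\lesssim\|u\|_{\dot B^{0}_{2,1}}\|u\|_{\dot B^{1}_{2,1}}\lesssim\|u\|_{\dot B^{-d/2}_{2,\infty}}\|u\|_{\dot B^{d/2+1}_{2,1}}.
\]
This yields $\|u(t)\|_{\dot B^{-d/2}_{2,\infty}}\le\|u_0\|_{\dot B^{-d/2}_{2,\infty}}+C\int_0^t\|u\|_{\dot B^{d/2+1}_{2,1}}\|u\|_{\dot B^{-d/2}_{2,\infty}}\ds$, which is exactly the paper's inequality. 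It avoids commutators entirely and decouples Step 2 from Step 1, so no bootstrap is needed.
\end{itemize}
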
\label{Theorem_1}
 The proof of Theorem \ref{theorem_Decay} is based on a method similar to the classical method of Nash \cite{Nash_1}, used to prove the decay of parabolic equations. Similar decay estimates have been obtained in \cite{Duan_Klem_Zhu_2010} under additional assumptions on the initial data, i.e., $u_0\in L^1(\mathbb{R}^d)$. Here, we assume  that the initial data  $u_0\in \dot{{B}}_{2,\infty}^{-d/2}$ which is a weaker assumption than $u_0\in L^1(\R^d)$ due to the embedding $L^1(\R^d)\hookrightarrow\dot{{B}}_{2,\infty}^{-d/2}(\mathbb{R}^d)$.  
 
   Two important ingredients are needed here. First, we need a modified nonlocal Nash inequality that replaces the classical Nash inequality:  
\begin{equation}
\|u\|_{L^2}^{2 }
    \lesssim 
    \|\nabla u\|_{L^2}^{\frac{2d}{d+2}}\|u\|_{L^1}^{\,\frac{4}{d+2}},\quad d\geq 1,\quad u\in H^1(\R^d) 
\end{equation}
and secondly, we need to propagate the $\dot{{B}}_{2,\infty}^{-d/2}$-norm by showing that for all $t\geq 0$, we have  
\begin{equation}
\|u(t)\|_{\dot{{B}}_{2,\infty}^{-d/2}}\lesssim  \|u_0\|_{\dot{{B}}_{2,\infty}^{-d/2}}. 
\end{equation}
These two results are proved  in Lemma \ref{Lemma_Nash} and Lemma \ref{negative_Besov_Norm}. With these two lemmas at hand, we adapt the method in \cite{Duan_Klem_Zhu_2010} to prove  the decay rate of the solution.

\begin{lemma}\label{Lemma_Nash}
It holds that
\begin{equation}
\int_{\R^d}\frac{1}{1+|\xi|^2}|\hat{u}|^2d\xi\leq C \left(\int_{\R^d}\frac{|\xi|^2}{1+|\xi|^2}|\hat{u}|^2 d\xi\right)^{\frac{d}{d+2}}\|u\|_{\dot{{B}}_{2,\infty}^{-d/2}}^{\frac{4}{d+2}}. 
\end{equation} 
\end{lemma}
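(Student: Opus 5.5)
The plan is the classical Nash-type frequency splitting, adapted to the symbol $\eta(\xi)=|\xi|^2/(1+|\xi|^2)$ and to the $\dot B^{-d/2}_{2,\infty}$ norm in place of $L^1$. Write $A:=\int_{\R^d}\frac{|\xi|^2}{1+|\xi|^2}|\hat u|^2d\xi$ and $N:=\|u\|_{\dot B^{-d/2}_{2,\infty}}$, and fix a radius $\rho\in(0,1]$ to be chosen later. We split
\begin{equation*}
\int_{\R^d}\frac{|\hat u|^2}{1+|\xi|^2}d\xi=\int_{|\xi|\le\rho}\frac{|\hat u|^2}{1+|\xi|^2}d\xi+\int_{|\xi|>\rho}\frac{|\hat u|^2}{1+|\xi|^2}d\xi=:I_1+I_2 .
\end{equation*}

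For the high part, on $|\xi|>\rho$ we have $\frac{1}{1+|\xi|^2}\le \rho^{-2}\frac{|\xi|^2}{1+|\xi|^2}$, so $I_2\le \rho^{-2}A$. For the low part we bound $I_1\le\int_{|\xi|\le\rho}|\hat u|^2d\xi$ and cover the ball by the dyadic annuli with $2^k\lesssim\rho$. By Plancherel and the definition of the Besov norm,
\begin{equation*}
\int_{|\xi|\le\rho}|\hat u|^2d\xi\lesssim\sum_{2^{k}\lesssim \rho}\|\dot\Delta_k u\|_{L^2}^2\le N^2\sum_{2^k\lesssim\rho}2^{kd}\lesssim \rho^{d}N^2 .
\end{equation*}
Here one has to account for the fact that the $\varphi(2^{-k}\cdot)$ form a partition of unity, using $|\hat u|^2\le C\sum_k|\varphi(2^{-k}\xi)\hat u|^2$ by finite overlap. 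This gives
\begin{equation*}
\int\frac{|\hat u|^2}{1+|\xi|^2}d\xi\lesssim \rho^dN^2+\rho^{-2}A .
\end{equation*}

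Optimizing, we take $\rho=(A/N^2)^{1/(d+2)}$, which balances both terms to $A^{d/(d+2)}N^{4/(d+2)}$, the claimed bound. The main obstacle is the constraint $\rho\le1$, needed in the high-part estimate; it is actually only used in the form $1+|\xi|^2\ge \rho^2\cdot$const, which holds anyway. More precisely, $\frac{1}{1+|\xi|^2}\le\rho^{-2}\frac{|\xi|^2}{1+|\xi|^2}$ holds whenever $|\xi|\ge\rho$, for any $\rho>0$, so no restriction on $\rho$ is needed and the low-part dyadic sum is also valid for any $\rho>0$ (it is a geometric series since $d>0$). Finally we dispose of the degenerate cases: if $N=\infty$ there is nothing to prove, and if $A=0$ then $\hat u=0$ a.e. (as $u\in\mathcal S_h'$), so both sides vanish.
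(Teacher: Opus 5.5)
Your proposal is correct and is essentially the paper's argument: split the frequency integral at a radius $R$, bound the low part by $R^d\|u\|_{\dot B^{-d/2}_{2,\infty}}^2$ via the dyadic geometric sum, bound the high part by $R^{-2}\int_{\R^d}\frac{|\xi|^2}{1+|\xi|^2}|\hat u|^2d\xi$, and balance the two terms with $R^{d+2}\sim A/\|u\|^2_{\dot B^{-d/2}_{2,\infty}}$. Your extra remarks are correct details the paper leaves implicit: the finite overlap of the dyadic blocks, the fact that no restriction $\rho\le 1$ is needed, and the degenerate cases.
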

\begin{proof}
    Let $R>0$. We have 
\begin{equation}
\begin{aligned}
I = \int_{\mathbb{R}^d } \frac{1}{1+|\xi|^2} |\widehat{u}|^2
= &\,\int_{|\xi|\le R} \frac{1}{1+|\xi|^2} |\widehat{u}|^2
+ \int_{|\xi|>R} \frac{1}{1+|\xi|^2} |\widehat{u}|^2\\
=:&\, I_{\mathrm{low}} + I_{\mathrm{high}}.
\end{aligned}
\end{equation}
\medskip
\noindent\textbf{Step 1} (Low-frequency estimates):
Since $\dfrac{1}{1+|\xi|^2} \le 1$, we have
\[
I_{\mathrm{low}}
\le \int_{|\xi|\le R} |\widehat{u}|^2
\lesssim \sum_{2^q \lesssim R} \|\dot\Delta_q u\|_{L^2}^2
\le \left( \sup_q 2^{-dq} \|\dot\Delta_q u\|_{L^2}^2 \right)
\sum_{2^q \lesssim R} 2^{dq}.
\]
By the definition of negative Besov norms, we obtain
\[
I_{\mathrm{low}} \lesssim \dfrac{R^d}{2^d-1} \|u\|_{\dot B_{2,\infty}^{-\frac{d}{2}}}^2.
\]

\noindent\textbf{Step 2} (High-frequency estimates):
For $|\xi| > R$, one has
\[
I_{\mathrm{high}} \le \frac{1}{R^2} \int_{\mathbb{R}^d } \frac{|\xi|^2}{1+|\xi|^2} |\widehat{u}|^2
= \frac{A}{R^2},
\]
where we denote
\[
A := \int_{\mathbb{R}^d } \frac{|\xi|^2}{1+|\xi|^2} |\widehat{u}(\xi)|^2\, d\xi.
\]
\medskip
Combining both parts, we obtain
\[
I \lesssim R^d B + \frac{A}{R^2},
\qquad
B := \|u\|_{\dot B_{2,\infty}^{-\frac{d}{2}}}^2.
\]
The right-hand side is minimized when $R^{d+2} \sim A/B$, which yields
\[
I \lesssim A^{\frac{d}{d+2}} B^{\frac{2}{d+2}}.
\]
This completes the proof.   
\end{proof}

In the following lemma, we show that we can propagate the Besov norm  $\dot{{B}}_{2,\infty}^{-\sigma}$ for $\sigma\in (-\frac{d}{2},\frac{d}{2}]$, 
 which is a crucial step in the proof of the decay estimate (see \cite[Theorem 2.2]{CRINBARAT20221} and \cite{XuXin}).
\begin{lemma}\label{negative_Besov_Norm}
Let $-d/2<\sigma\leq d/2$. 
Let $u$ be a solution of \eqref{Main_problem}, then we have 
\begin{equation}\label{Propagation_regularity}
\|u\|_{\dot{{B}}_{2,\infty}^{-\sigma}}\leq C\|u_0\|_{\dot{{B}}_{2,\infty}^{-\sigma}}.
\end{equation}

\end{lemma}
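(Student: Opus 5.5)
We propose to localize \eqref{Main_problem} with $\dot\Delta_k$ and perform the $L^2$ energy estimate exactly as in the proof of Theorem \ref{Theorem_Main_Linear}. The difference is that the nonlinear term $\dot\Delta_k(\nabla\cdot f(u))$ must now be kept as a source. Taking the inner product with $\dot\Delta_k u$, discarding the nonnegative dissipation $\|\nabla\sqrt P\dot\Delta_k u\|_{L^2}^2$, and using Lemma \ref{Lemma_Diff_Ineq} (the usual trick of dividing by $\|\dot\Delta_k u\|_{L^2}$), we get for every $k\in\Z$
\begin{equation*}
\|\dot\Delta_k u(t)\|_{L^2}\le \|\dot\Delta_k u_0\|_{L^2}+\int_0^t \|\dot\Delta_k \nabla\cdot f(u)\|_{L^2}\,\mathrm{d}\tau .
\end{equation*}
Multiplying by $2^{-k\sigma}$ and taking the supremum over $k$ gives
\begin{equation*}
\|u(t)\|_{\dot B^{-\sigma}_{2,\infty}}\le \|u_0\|_{\dot B^{-\sigma}_{2,\infty}}+\int_0^t\|f(u)\|_{\dot B^{1-\sigma}_{2,\infty}}\,\mathrm{d}\tau .
\end{equation*}
In high frequencies one could instead retain the damping and commute $f'(u)$ with $\dot\Delta_k$ as in Section \ref{Section_Nonl}. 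However, since only a $\dot B^{-\sigma}$ bound with $-\sigma<d/2+1$ is needed, the crude source bound loses at most one derivative there, and that loss is paid by the $\dot B^{d/2+1,H}_{2,1}$ control already available.

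Next we estimate the nonlinear term. By \eqref{Assumtion_f} we write $f(u)=g(u)\,u\otimes u$ with $g$ smooth, so $f(u)$ is at least quadratic. We then use the product law $\dot B^{d/2}_{2,1}\times\dot B^{1-\sigma}_{2,\infty}\to\dot B^{1-\sigma}_{2,\infty}$, valid when $-d/2<1-\sigma\le d/2$, together with the composition estimate of Lemma \ref{Composition_classical}. This yields
\begin{equation*}
\|f(u)\|_{\dot B^{1-\sigma}_{2,\infty}}\lesssim C(\|u\|_{L^\infty})\,\|u\|_{\dot B^{d/2}_{2,1}}\|u\|_{\dot B^{1-\sigma}_{2,\infty}}.
\end{equation*}
The product law only covers $1-\sigma\le d/2$, i.e.\ $\sigma\ge 1-d/2$. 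For $\sigma$ near $-d/2$ we would instead put the derivative elsewhere, bounding $\|\nabla\cdot f(u)\|_{\dot B^{-\sigma}_{2,\infty}}\lesssim\|f'(u)\|_{\dot B^{d/2}_{2,1}}\|\nabla u\|_{\dot B^{-\sigma}_{2,\infty}}$ with $-d/2<-\sigma\le d/2$. Either way, we must then interpolate $\|u\|_{\dot B^{1-\sigma}_{2,\infty}}$ (or $\|u\|_{\dot B^{1-\sigma}}$) between $\|u\|_{\dot B^{-\sigma}_{2,\infty}}$ and the dissipated norms.

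The cleanest closure, which we will try first, splits frequencies. On $k\le 0$, use $2^{k}\le 1$ to get $\|u\|^L_{\dot B^{1-\sigma}_{2,\infty}}\le\|u\|_{\dot B^{-\sigma}_{2,\infty}}$. On $k>0$, bound $\|u\|^H_{\dot B^{1-\sigma}_{2,\infty}}\lesssim \|u\|_{\dot B^{d/2+1,H}_{2,1}}$, using $1-\sigma<d/2+1$; this is where $\sigma>-d/2$ enters. This gives
\begin{equation*}
\|u(t)\|_{\dot B^{-\sigma}_{2,\infty}}\le \|u_0\|_{\dot B^{-\sigma}_{2,\infty}}+C\int_0^t \|u\|_{\dot B^{d/2}_{2,1}}\bigl(\|u\|_{\dot B^{-\sigma}_{2,\infty}}+\|u\|_{\dot B^{d/2+1,H}_{2,1}}\bigr)\mathrm{d}\tau.
\end{equation*}

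The main obstacle is that $\int_0^\infty\|u\|_{\dot B^{d/2}_{2,1}}\,\mathrm{d}\tau$ is not controlled by Theorem \ref{Main_Theorem_Nonl}. Only the $L^1_t(\dot B^{d/2+1})$ and $L^2_t(\dot B^{d/2})$ norms are small, by \eqref{Main_Estimate} and \eqref{Estimate_Term_low}, so Gronwall does not close directly. Our first attempt is to gain the missing derivative from the low-frequency dissipation rather than the trivial bound $2^k\le1$. We keep the dissipation term, so that for $k<0$ the block estimate reads $\|\dot\Delta_k u(t)\|\le e^{-c2^{2k}t}\|\dot\Delta_k u_0\|+\int_0^t e^{-c2^{2k}(t-\tau)}2^k\|\dot\Delta_k f(u)\|\,\mathrm{d}\tau$. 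Set $X(t)=\sup_{\tau\le t}\|u(\tau)\|_{\dot B^{-\sigma}_{2,\infty}}$. Using $\sup_{\tau\le t}\|\dot\Delta_k f(u)(\tau)\|\lesssim 2^{k\sigma}\|u\|_{L^\infty_t\dot B^{d/2}_{2,1}}X(t)$ and $\int_0^t 2^k e^{-c2^{2k}(t-\tau)}\,\mathrm{d}\tau\lesssim 2^{-k}$, this forces a balance of $L^2_t$-type norms. We therefore instead split $f(u)=u\cdot(g(u)u)$ and estimate in $L^2_t$: the factor $\|u\|_{L^2_t\dot B^{d/2}_{2,1}}$ times $\|2^{k}e^{-c2^{2k}(t-\cdot)}\|_{L^2_t}\lesssim 1$ gives a bound $X(t)\le\|u_0\|_{\dot B^{-\sigma}_{2,\infty}}+C\,\|u\|_{L^2_t(\dot B^{d/2}_{2,1})}\,X(t)+C\mathcal D(t)\mathcal E(t)$. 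Since $\|u\|_{L^2_t\dot B^{d/2}_{2,1}}\lesssim\sqrt{\mathcal E\mathcal D}\lesssim\alpha$ is small, the $X$ term is absorbed, giving \eqref{Propagation_regularity}. The high frequencies are handled as above with the $L^1_t$ bound $\mathcal D(t)$, using $\|u\|_{\dot B^{d/2+1,H}_{2,1}}$ summed against the damping factor.
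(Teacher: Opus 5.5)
Your low-frequency device is sound. For $k\le 0$, you pair $\|2^{k}e^{-c2^{2k}(t-\cdot)}\|_{L^2(0,t)}\lesssim 1$ with $\|u\|_{L^2_t(\dot B^{d/2}_{2,1})}\lesssim\sqrt{\mathcal E\mathcal D}$ and the product law $\dot B^{d/2}_{2,1}\times\dot B^{-\sigma}_{2,\infty}\to\dot B^{-\sigma}_{2,\infty}$, and this does produce a term of the form (small)$\times X(t)$.

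The gap is at high frequencies, and it is visible in your own final inequality
\[
X(t)\le\|u_0\|_{\dot B^{-\sigma}_{2,\infty}}+C\|u\|_{L^2_t(\dot B^{d/2}_{2,1})}X(t)+C\,\mathcal D(t)\mathcal E(t).
\]
Absorbing the middle term leaves $X(t)\lesssim\|u_0\|_{\dot B^{-\sigma}_{2,\infty}}+\|u_0\|_X^2$, which is not \eqref{Propagation_regularity}. The quadratic remainder is not controlled by $\|u_0\|_{\dot B^{-\sigma}_{2,\infty}}$: for data localized at frequency $2^K$, $K\gg1$, with $\|u_0\|_X=\alpha$, one has $\|u_0\|_{\dot B^{-\sigma}_{2,\infty}}\sim\alpha 2^{-K(d/2+1+\sigma)}\ll\alpha^2$.

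This is structural, not a bookkeeping slip. If $\nabla\cdot f(u)$ is treated as a source for $k>0$, consider the part of $\dot\Delta_k\nabla\cdot f(u)$ where $f'(u)$ sits at low frequency and $\nabla u$ at frequency $\sim 2^k$ (roughly $\dot S_{k-1}f'(u)\cdot\nabla\dot\Delta_k u$). It costs a full derivative on $u$ at that frequency. Since $\Delta P$ only damps at high frequencies and gains no derivative, that derivative can only be paid by $\|u\|_{\dot B^{d/2+1,H}_{2,1}}$, which is unrelated to $\|u\|_{\dot B^{-\sigma}_{2,\infty}}$. No choice of product or composition law removes this loss.

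The missing idea is the symmetrization used in the paper. Write the localized equation as in \eqref{Main_N_problem_k_comm}. Integrate the transport term by parts,
\[
\int f'(u)\cdot\nabla\dot\Delta_k u\,\dot\Delta_k u\,dx=-\frac12\int(\nabla\cdot f'(u))|\dot\Delta_k u|^2\,dx,
\]
and bound the commutator $[f'(u),\dot\Delta_k]\nabla u$ by \eqref{Commu_s} with $s=-\sigma\in[-d/2,d/2)$. After dividing by $\|\dot\Delta_k u\|_{L^2}$, both contributions are $\lesssim 2^{k\sigma}\|\nabla f'(u)\|_{\dot B^{d/2}_{2,1}}\|u\|_{\dot B^{-\sigma}_{2,\infty}}$. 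This is linear in the quantity being propagated, with coefficient $\|\nabla f'(u)\|_{\dot B^{d/2}_{2,1}}\lesssim\|u\|_{\dot B^{d/2+1}_{2,1}}\in L^1(\mathbb R_+)$.

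Because the derivative now falls on $f'(u)$, the obstacle you identified ($\|u\|_{\dot B^{d/2}_{2,1}}\notin L^1_t$) never appears, and no $L^2_t$ low-frequency argument is needed. Gronwall then gives $\|u(t)\|_{\dot B^{-\sigma}_{2,\infty}}\le\|u_0\|_{\dot B^{-\sigma}_{2,\infty}}\exp\bigl(C\int_0^\infty\|u\|_{\dot B^{d/2+1}_{2,1}}\,\mathrm{d}s\bigr)$. The exponential is bounded independently of the data thanks to \eqref{Main_Estimate}.
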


\begin{proof}
 The proof follows the method developed in \cite{XuXin}. We have
\begin{equation}\label{Main_Inequality_prop}
\|u\|_{\dot{{B}}_{2,\infty}^{-\sigma}}\lesssim \|u_0\|_{\dot{{B}}_{2,\infty}^{-\sigma}}+\int_0^t\sup_{k\in \Z} 2^{-\sigma k}\left(\|\nabla f^{\prime}(u)\|_{L^\infty}\| \dot{\Delta}_{k} u\|_{L^2}+\Vert [f'(u), \dot{\Delta}_{k} ]\nabla u\Vert
_{L^{2}}\right)\dt. 
\end{equation}
Applying  \eqref{Commu_s}, we obtain 
\begin{equation}
\sup_{k\in \Z} 2^{-\sigma k}\Vert [f'(u), \dot{\Delta}_{k} ]\nabla u\Vert
_{L^{2}}\lesssim C \|\nabla f'(u)\|_{\dot{B}%
_{2,1}^{\frac{d}{2}}}\|u\|_{\dot{B}%
_{2,\infty}^{-\sigma}}. 
\end{equation}
 Since $f'$ is smooth with $f'(0)=0$, we have
 \begin{equation}
\|\nabla f'(u)\|_{\dot{B}%
_{2,1}^{\frac{d}{2}}}\leq \| f'(u)\|_{\dot{B}%
_{2,1}^{\frac{d}{2}+1}}\leq  C(\|u\|_{L^\infty})\|u\|_{\dot{B}%
_{2,1}^{\frac{d}{2}+1}}.
\end{equation}
We have
\begin{equation}\label{Main_Inequality_prop_1}
\|u\|_{\dot{{B}}_{2,\infty}^{-\sigma}}\lesssim \|u_0\|_{\dot{{B}}_{2,\infty}^{-\sigma}}+\int_0^t\|u\|_{\dot{B}%
_{2,1}^{\frac{d}{2}+1}} \|u\|_{\dot{B}%
_{2,\infty}^{-\sigma}}\dt. 
\end{equation}
Applying Gronwall's inequality, we obtain 
\begin{equation}\label{Ineq_unif}
\|u\|_{\dot{{B}}_{2,\infty}^{-\sigma}}\lesssim \|u_0\|_{\dot{{B}}_{2,\infty}^{-\sigma}}\exp\left(\int_0^t\|u\|_{\dot{B}%
_{2,1}^{\frac{d}{2}+1}}\ds\right).
\end{equation}
By Theorem \ref{Main_Theorem_Nonl}, we have 
$u\in L^{1}((0,\infty);\dot{B}_{2,1}^{\frac{d}{2}+1}(%
\mathbb{R}^{d}))$. Hence, \eqref{Ineq_unif} implies that $\|u\|_{\dot{{B}}_{2,\infty}^{-\sigma}}$ is uniformly bounded and then \eqref{Propagation_regularity} is satisfied. 
\end{proof}

\begin{proof}[Proof of Theorem \ref{theorem_Decay}]
Having outlined the general idea above, we proceed with the proof of \eqref{Estimate_Decay_Main}. 
First, it holds that  for all $t\geq 0$ (see \cite[Lemma 2.1]{Duan_Klem_Zhu_2010})
\begin{equation}\label{a_priori_bound}
    \frac{1}{2}\ddt\|u(t)\|_{L^2}^2+\int_{\R^d}\frac{|\xi|^2}{1+|\xi|^2}|\hat{u}(\xi)|^2 d\xi=0.
\end{equation}
Multiplying \eqref{a_priori_bound} by $(1+t)^{\alpha}$ and integrating with respect to time, we obtain 
\begin{equation}\label{time_Weighted_Est_1}
\begin{aligned}
&(1+t)^{\alpha}\|u(t)\|_{L^2}^2+2\int_0^t\int_{\R^d}(1+s)^\alpha \frac{|\xi|^2}{1+|\xi|^2}|\hat{u}(\xi)|^2 d\xi d s\\
\leq&\, \|u_0\|_{L^2}^2+\alpha \int_0^t (1+s)^{\alpha-1}\|u(s)\|_{L^2}^2 ds.
\end{aligned}
\end{equation}
To estimate the last term on the right-hand side of \eqref{time_Weighted_Est_1}, we split it as 
\begin{equation}
\begin{aligned}
   \int_0^t (1+s)^{\alpha-1}\|u(s)\|_{L^2}^2 ds&\,=\int_0^t\int_{\R^d} (1+s)^{\alpha-1}\frac{|\xi|^2}{1+|\xi|^2}|\widehat u(\xi,s)|^2 ds\\
   &+\int_0^t \int_{\mathbb{R}^d}(1+s)^{\alpha-1}\frac{1}{1+|\xi|^2}|\widehat u(\xi,s)|^2 ds\\
   &\, := \rm{J}_1+\rm{J}_2.
    \end{aligned}
\end{equation}
The term $\rm{J}_1$ is estimated using the dissipation in \eqref{time_Weighted_Est_1}, whereas $\rm{J}_2$ is estimated using Lemmas \ref{Lemma_Nash}  and \ref{negative_Besov_Norm}. As in \cite{Duan_Klem_Zhu_2010}, we have for any $\varepsilon>0$
\begin{equation}\label{J_1_Estimate}
    {\rm{J}}_1\leq  \varepsilon \int_0^t\int_{\R^d}(1+s)^\alpha\frac{|\xi|^2}{1+|\xi|^2}|\hat{u}(\xi)|^2 d\xi d s+C(\varepsilon) \|u_0\|_{L^2}^2.
\end{equation}
To show \eqref{J_1_Estimate}, we have for $0\leq \alpha\leq 1$ 
\begin{equation}
    {\rm{J}_1}\leq \int_0^t\int_{\R^d}(1+s)^\alpha\frac{|\xi|^2}{1+|\xi|^2}|\hat{u}(\xi)|^2 d\xi d s\leq \frac{1}{2} \|u_0\|_{L^2}^2. 
\end{equation}
where we have used  \eqref{time_Weighted_Est_1}(with $\alpha=0$). For $\alpha>1$, we have by using Young's inequality with the exponents $\frac{\alpha-1}{\alpha}+\frac{1}{\alpha}=1$:  
\begin{equation}
\begin{aligned}
    {\rm{J}_1}\leq \varepsilon \int_0^t\int_{\R^d}(1+s)^\alpha\frac{|\xi|^2}{1+|\xi|^2}|\hat{u}(\xi)|^2 d\xi d s+C(\varepsilon) \int_0^t\int_{\R^d}\frac{|\xi|^2}{1+|\xi|^2}|\hat{u} (\xi)|^2 d\xi d s\\ 
    \leq  \varepsilon \int_0^t\int_{\R^d}(1+s)^\alpha\frac{|\xi|^2}{1+|\xi|^2}|\hat{u}(\xi)|^2 d\xi d s+C(\varepsilon) \|u_0\|_{L^2}^2. 
    \end{aligned}
\end{equation}
Now, for $\rm{J}_2$, we use Lemma \ref{Lemma_Nash}, to get  
\begin{equation}
{\rm{J}_2}\lesssim \int_0^t (1+s)^{\alpha-1}\left(\int_{\R^d}\frac{|\xi|^2}{1+|\xi|^2}|\hat{u}|^2 d\xi\right)^{\frac{d}{d+2}}\|u\|_{\dot{{B}}_{2,\infty}^{-d/2}}^{\frac{4}{d+2}}ds
\end{equation}
Using Young's inequality with the exponents $\frac{d}{d+2}+\frac{2}{d+2}=1$ together with Lemma \ref{negative_Besov_Norm}, we obtain 
\begin{equation}\label{J_2_Estimate}
\begin{aligned}
{\rm{J}_2}\leq&\, \varepsilon \int_0^t \int_{\R^d}(1+s)^{\alpha}\frac{|\xi|^2}{1+|\xi|^2}|\hat{u}|^2 d\xi ds+C(\varepsilon)\|u_0\|_{\dot{{B}}_{2,\infty}^{-d/2}}^2\int_0^t (1+s)^{\alpha-\frac{d}{2}-1}ds\\
\leq&\, \varepsilon \int_0^t \int_{\R^d}(1+s)^{\alpha}\frac{|\xi|^2}{1+|\xi|^2}|\hat{u}|^2 d\xi ds+C(\varepsilon)\|u_0\|_{\dot{{B}}_{2,\infty}^{-d/2}}^2(1+t)^{\alpha-\frac{d}{2}}, 
\end{aligned}
\end{equation}
provided that $\alpha>\frac{d}{2}$. Hence, selecting $\varepsilon$ small enough and putting together the estimates \eqref{time_Weighted_Est_1}, \eqref{J_1_Estimate} and \eqref{J_2_Estimate}, we obtain  
\begin{equation}\label{time_Weighted_Est_2}
\begin{aligned}
&(1+t)^{\alpha}\|u(t)\|_{L^2}^2+2\int_0^t\int_{\R^d}(1+s)^\alpha \frac{|\xi|^2}{1+|\xi|^2}|\hat{u}(\xi)|^2 d\xi d s\lesssim \|u_0\|_{L^2}^2+\|u_0\|_{\dot{{B}}_{2,\infty}^{-d/2}}^2(1+t)^{\alpha-\frac{d}{2}}.
\end{aligned}
\end{equation}
with $\alpha>\frac{d}{2}$ and any $t\geq 0$. This yields \eqref{Estimate_Decay_Main} and finishes the proof of Theorem \ref{theorem_Decay}. 
\end{proof}

\subsection{Decay estimate of higher-order norm}\label{Section_Decay_rate_Besov}
In this section, and by adapting the method in \cite{Danchin_EMS}, we give a decay rate for the higher-order norms of the solution in the hybrid Besov space $\dot{B}_{2,1}^{d/2-1,d/2+1}$. More precisely, we have the following decay estimate.
\begin{theorem}\label{Theorem_Decay_2}
Suppose that the assumptions of Theorem \ref{Main_Theorem_Nonl} hold. Let $\sigma_1\in (1-\frac{d}{2}, \frac{d}{2}]$. Assume in addition that $u_0\in \dot B^{-\sigma_1}_{2,\infty}$. 
Let 
\begin{equation}
\theta_0=\frac{4}{d+2 \sigma_1 +2}\qquad \alpha_1=\frac{1}{2}(\frac{d}{2}+\sigma_1-1)\quad \text{and}\quad c_0=\left(\|u_0\|_{\dot B^{-\sigma_1}_{2,\infty}}+\Vert u_0\Vert _{X}\right)^{-\frac{\theta_0}{1-\theta_0}}.
\end{equation}
Then, it holds that 
\begin{subequations}
\begin{equation} \label{Decay_Rate_Besov}
\|u(t)\|_{\mathcal{X}}\lesssim (1+c_0 t)^{-\alpha_1}
\end{equation}
and 
\begin{equation}\label{Decay_high_Freq}
\Vert u(t)\Vert _{\dot{{B}}_{2,1}^{d/2+1 ,H}}
\lesssim\,(1+c_0t)^{-2\alpha_1}.
\end{equation}
\end{subequations}
In addition, for $-\sigma_1<\sigma\leq \frac{d}{2}-1$, we have
\begin{subequations}
\begin{equation}\label{Decay_Sigma}
\Vert u(t)\Vert_{\dot B^{\sigma,L}_{2,1}}\leq C(\Vert u_0\Vert_{\dot B^{-\sigma_1}_{2,\infty}}, \|u_0\|_X)(1+ t)^{-\frac{\sigma_1+\sigma}{2}}. 
\end{equation}
 Also, if $u_0\in L^2\cap\dot B^{-d/2}_{2,\infty} $, then for  any $\sigma'\in (0,d/2)$, it holds that 
 \begin{equation}\label{Decay_Negative_Besov}
\|u(t)\|_{\dot B^{-\sigma'}_{2,\infty} }\leq C(\|u_0\|_{\dot B^{-d/2}_{2,\infty} }, \|u_0\|_{L^2}) (1+t)^{-(\frac{d}{4}-\frac{\sigma'}{2})}.
\end{equation}
 \end{subequations}

\end{theorem}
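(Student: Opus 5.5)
The proof adapts the Lyapunov-type argument of \cite{Danchin_EMS} (see also \cite{XuXin}). The core of the argument is a differential inequality for $\mathcal{L}(t)\simeq\|u(t)\|_{\mathcal X}$, closed by interpolating against the propagated negative Besov norm.

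\textbf{Step 1: a differential inequality for $\mathcal{L}$.} I revisit the proof of Theorem \ref{Main_Theorem_Nonl} without integrating in time. The localized inequalities \eqref{First_k_estimate_Non_2_1} (for $k\le0$) and \eqref{First_k_estimate_Non_2_2} (for $k>0$) are weighted by $2^{k(d/2-1)}$ and $2^{k(d/2+1)}$ respectively and summed. Combined with Lemma \ref{Lemma_Diff_Ineq}, this produces a functional $\mathcal{L}(t)\simeq \|u(t)\|_{\mathcal X}$ satisfying
\begin{equation}
\ddt\mathcal{L}+c\bigl(\|u\|_{\dot B^{\frac d2+1,L}_{2,1}}+\|u\|_{\dot B^{\frac d2+1,H}_{2,1}}\bigr)\lesssim \|u\|_{\mathcal X}\,\|u\|_{\dot B^{\frac d2+1}_{2,1}}+\|u\|_{\dot B^{\frac d2,L}_{2,1}}^2 .
\end{equation}
The first term on the right comes from the same product and composition estimates used before. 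The delicate term is $\|u\|^2_{\dot B^{d/2,L}}$, which I bound pointwise in time by interpolation: $\|u\|_{\dot B^{d/2,L}}^2\lesssim\|u\|_{\dot B^{d/2-1,L}}\|u\|_{\dot B^{d/2+1,L}}$. The smallness $\|u\|_{\mathcal X}\le K\alpha$ from \eqref{Main_Estimate} then lets the dissipation absorb the entire right-hand side. The result is $\ddt\mathcal L+c\,\mathcal H\le0$, where $\mathcal H:=\|u\|_{\dot B^{d/2+1,L}_{2,1}}+\|u\|_{\dot B^{d/2+1,H}_{2,1}}$.

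\textbf{Step 2: the lower bound and the ODE.} The high-frequency part of $\mathcal L$ is already dominated by $\mathcal H$. For the low-frequency part, Lemma \ref{negative_Besov_Norm} applies because $\sigma_1\in(1-d/2,d/2]$, so $\|u(t)\|_{\dot B^{-\sigma_1}_{2,\infty}}\le C\|u_0\|_{\dot B^{-\sigma_1}_{2,\infty}}$ uniformly in $t$. Interpolation between $\dot B^{-\sigma_1}_{2,\infty}$ and $\dot B^{d/2+1,L}_{2,1}$, with the $\ell^1$ summability coming from the strict inequality $-\sigma_1<d/2-1$, gives
\begin{equation}
\|u\|_{\dot B^{\frac d2-1,L}_{2,1}}\lesssim \|u\|_{\dot B^{-\sigma_1}_{2,\infty}}^{\theta_0}\,\|u\|_{\dot B^{\frac d2+1,L}_{2,1}}^{1-\theta_0},
\end{equation}
where $\theta_0=2/(d/2+\sigma_1+1)=4/(d+2\sigma_1+2)$.

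Combining this with $\mathcal L\lesssim\|u_0\|_X$, I obtain
\[
\mathcal H\gtrsim \bigl(\|u_0\|_{\dot B^{-\sigma_1}_{2,\infty}}+\|u_0\|_X\bigr)^{-\theta_0/(1-\theta_0)}\mathcal L^{1/(1-\theta_0)}=c_0\,\mathcal L^{1/(1-\theta_0)} .
\]
Integrating the ODE $\mathcal L'+c\,c_0\mathcal L^{1/(1-\theta_0)}\le0$ yields $\mathcal L(t)\lesssim(1+c_0t)^{-(1-\theta_0)/\theta_0}$. Since $(1-\theta_0)/\theta_0=\alpha_1$, this is \eqref{Decay_Rate_Besov}.

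\textbf{Step 3: the remaining estimates.}

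For \eqref{Decay_high_Freq}, I use the damped high-frequency inequality $\ddt\|u\|_{\dot B^{d/2+1,H}}+c\|u\|_{\dot B^{d/2+1,H}}\lesssim \|u\|_{\dot B^{d/2+1}}\|u\|_{\dot B^{d/2+1}}$ from \eqref{Higher_Order_Estimate_M_2}. The forcing comes from low frequencies and is of size $\|u\|_{\dot B^{d/2+1,L}}\lesssim\|u\|_{\dot B^{d/2-1,L}}$, which is quadratic and hence decays like $(1+c_0t)^{-2\alpha_1}$. Duhamel's formula with the $e^{-ct}$ kernel then gives the doubled rate.

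For \eqref{Decay_Sigma}, I write the Duhamel formula in low frequencies. The linear part is handled by Theorem \ref{Theorem_Main_Linear}. The nonlinear part $\nabla\cdot f(u)$ is estimated with the heat-type kernel bound of Lemma \ref{Lemma_heat_Semi_Group}, splitting the time integral into $[0,t/2]$ and $[t/2,t]$ and using the decay already obtained. Alternatively, I can interpolate $\dot B^{\sigma,L}$ between $\dot B^{-\sigma_1}_{2,\infty}$ and $\dot B^{d/2+1,L}$ inside the same Lyapunov argument.

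For \eqref{Decay_Negative_Besov}, interpolation between $\dot B^{-d/2}_{2,\infty}$ (bounded by Lemma \ref{negative_Besov_Norm}) and $L^2$ (decaying like $(1+t)^{-d/4}$ by Theorem \ref{theorem_Decay}) gives
\[
\|u\|_{\dot B^{-\sigma'}_{2,\infty}}\le\|u\|_{\dot B^{-d/2}_{2,\infty}}^{2\sigma'/d}\|u\|_{\dot B^0_{2,\infty}}^{1-2\sigma'/d}\lesssim(1+t)^{-(d/4-\sigma'/2)} .
\]

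\textbf{Main obstacle.} I expect the hardest part to be Step 1: closing the pointwise-in-time inequality with a genuine Lyapunov functional, and in particular absorbing $\|u\|_{\dot B^{d/2,L}}^2$ without time integration. My plan there is the interpolation with $\|u\|_{\mathcal X}$ small described above. A second delicate point is the $\ell^1$ summation in the interpolation of Step 2, which is exactly where the restriction $\sigma_1>1-d/2$ is needed.
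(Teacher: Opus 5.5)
Your arguments for \eqref{Decay_Rate_Besov}, \eqref{Decay_high_Freq} and \eqref{Decay_Negative_Besov} are essentially the paper's. These are the Lyapunov inequality $\mathcal L'+c\,\mathcal H\le 0$, the interpolation against the propagated $\dot B^{-\sigma_1}_{2,\infty}$ norm giving $\mathcal L'+c\,c_0\mathcal L^{1/(1-\theta_0)}\le 0$, Duhamel for the damped high frequencies with a quadratic forcing, and the $\dot B^{-d/2}_{2,\infty}$--$L^2$ interpolation. In Step 1 you absorb $\|u\|^2_{\dot B^{d/2,L}_{2,1}}\le\|u\|_{\dot B^{d/2-1,L}_{2,1}}\|u\|_{\dot B^{d/2+1,L}_{2,1}}$ pointwise in time. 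That is a valid and slightly more direct substitute for the paper's time-integrated interpolation.

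The gap is in \eqref{Decay_Sigma}. Your Duhamel route, fed only with what you have at that stage, does not close on the whole admissible range. Use the natural bound $\|f(u)\|_{\dot B^{-\sigma_1}_{2,\infty}}\lesssim\|u\|_{\dot B^{d/2}_{2,1}}\|u\|_{\dot B^{-\sigma_1}_{2,\infty}}$ together with $\|u\|_{\dot B^{d/2}_{2,1}}\lesssim\|u\|_{\mathcal X}\lesssim(1+c_0t)^{-\alpha_1}$. You are then led to $\int_0^t(1+t-s)^{-\frac{\sigma+\sigma_1+1}{2}}(1+s)^{-\alpha_1}\,ds$. For $d=3$, $\sigma_1=0$, $\sigma=\frac14$ (so $\alpha_1=\frac14$), this grows like $(1+t)^{1/8}$ instead of decaying like $(1+t)^{-1/8}$. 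Making this route work would require a full bootstrap on improved decay of $\|u\|_{\dot B^{d/2}_{2,1}}$, high frequencies included.

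Your alternative does not fix this either. Read as a pointwise interpolation between $\dot B^{-\sigma_1}_{2,\infty}$ and $\dot B^{d/2+1,L}_{2,1}$, it loses rate. The only pointwise control you have on $\|u\|_{\dot B^{d/2+1,L}_{2,1}}$ is the bound by $\|u\|_{\dot B^{d/2-1,L}_{2,1}}$, so you only get $(1+c_0t)^{-\alpha_1\frac{\sigma+\sigma_1}{d/2+1+\sigma_1}}$, which is slower than $(1+t)^{-\frac{\sigma+\sigma_1}{2}}$.

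The missing observation is that the upper endpoint should be $\dot B^{d/2-1,L}_{2,1}$ itself, whose decay you have just proved in \eqref{Decay_Rate_Besov}. Splitting the low-frequency sum, which is possible since $-\sigma_1<\sigma\le\frac d2-1$, gives
\[
\|u\|_{\dot B^{\sigma,L}_{2,1}}\lesssim\|u\|^{\nu}_{\dot B^{\frac d2-1,L}_{2,1}}\|u\|^{1-\nu}_{\dot B^{-\sigma_1}_{2,\infty}},\qquad \nu=\frac{2(\sigma+\sigma_1)}{d+2\sigma_1-2}\in(0,1].
\]
Since $\nu\alpha_1=\frac{\sigma+\sigma_1}{2}$, combining this with \eqref{Decay_Rate_Besov} and Lemma \ref{negative_Besov_Norm} yields \eqref{Decay_Sigma} at once. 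This is exactly the paper's argument.
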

\begin{remark}
The decay rate in \eqref{Decay_Sigma} is the same decay rate obtained in \eqref{Main_Decay_estimate}  for the low frequency part of  the linear problem and is consistent with the optimal decay rate for the heat equation \cite{Zha_JMP,Guo_Wang_2012}.  Although the decay rate of the high frequency part for the linearized problem is exponentially fast, this is not the case for the nonlinear problem due to nonlinear interaction between the low and high frequency terms \cite{Danchin_EMS}. 
\end{remark}
\begin{remark}
Using the interpolation inequality \begin{equation}
\|u\|_{\dot B^{0}_{2,2}}
\;\le\;
\|u\|_{\dot B^{-\sigma'}_{2,\infty}}^{1/2}
\,
\|u\|_{\dot B^{\frac d2-1}_{2,1}}^{1/2}\qquad\text{with}\quad  
\sigma'=\frac{d}{2}-1
\end{equation}
together with the fact that $L^2\approx B_{2,2}^0$ (see \cite[Proposition 2.39]{Bahouri_2011_1}),  the estimates \eqref{Decay_Rate_Besov} and \eqref{Decay_Negative_Besov}, we obtain, for $\sigma_1=d/2$,
\begin{equation}
\|u(t)\|_{L^2}\lesssim (1+c_0 t)^{-\alpha_1(1-\theta)}(1+t)^{-\theta(\frac{d}{4}-\frac{\sigma'}{2})}\leq C(c_0, \|u_0\|_{L^2}) (1+c_0t)^{-d/4},\quad d>2.
\end{equation}
which is the same decay rate obtained in \eqref{Estimate_Decay_Main} although \eqref{Estimate_Decay_Main} also holds for $d=2$. 
\end{remark}
\begin{remark}
Using the interpolation inequality \begin{equation}
\|u\|_{L^p}
\le
C\,\|u\|_{L^2}^{\frac{2(p-1)}{p}}
\|u\|_{L^1}^{\frac{2-p}{p}},\quad 1<p<2
\end{equation}
together with \eqref{Contra_1} and the decay estimate in \cite[Theorem 2.1]{Duan_Klem_Zhu_2010}, we obtain the $L^1-L^p$ decay estimate 
\begin{equation}\label{L_p_Decay}
\|u\|_{L^p}\leq C(\|u_0\|_{L^1}, \|u_0\|_{L^2}) (1+t)^{-\frac{d}{2}(1-\frac{1}{p})}.
\end{equation}
Now, using \eqref{Decay_Negative_Besov} for $\sigma'=\frac{d}{p}-\frac{d}{2}$, we obtain for $p\in (1,2)$
 \begin{equation}\label{Decay_L_p}
\|u(t)\|_{\dot B^{\frac{d}{2}-\frac{d}{p}}_{2,\infty} }\leq C(\|u_0\|_{\dot B^{-d/2}_{2,\infty} }, \|u_0\|_{L^2}) (1+t)^{-\frac{d}{2}(1-\frac{1}{p})},\quad 
\end{equation} 
which is the same 
   optimal $L^1-L^p$ decay rate in \eqref{L_p_Decay}  in the larger spaces $\dot B^{\frac{d}{p}-\frac{d}{2}}_{2,\infty}(\R^d)\supset L^p(\R^d) $ under  a weaker assumption on the initial data. i.e., $u_0\in \dot B^{-\frac{d}{2}}_{2,\infty}(\R^d)\supset L^1(\R^d)$.  
\end{remark}
\begin{proof}[Proof of Theorem \ref{Theorem_Decay_2}]
We prove the decay estimate using an interpolation argument as in \cite{XuXin} and \cite{Danchin_EMS}. This method allows $\|u_0\|_{\dot B^{-\sigma_1}_{2,\infty}}$ to be large, hence, only $\|u_0\|_X $ is required to be small. 
By adapting the method in \cite{Danchin_EMS}, and following the same steps as in the proof of the estimate \eqref{Main_Estimate}, we obtain for all $t\geq t_0 \geq 0$   
\begin{equation}\label{Ineq_L_H}
\mathcal{L}(t)+c\int_{t_0}^t \mathcal{H}(t)\dt\lesssim \mathcal{L}(t_0)
\end{equation}
 with 
 \begin{equation}
\mathcal{L}(t)\sim\Vert u(t)\Vert _{%
\dot{B}_{2,1}^{\frac{d}{2}-1,L}}+\Vert
u(t)\Vert _{\dot{{B}}_{2,1}^{\frac{d}{2}+1 ,H}}\quad \text{and}\quad \mathcal{H}(t)=\|u(t)\|_{\dot{{B}}_{2,1}^{\frac{d}{2}+1}}
\end{equation}
Inequality \eqref{Ineq_L_H} implies that $\mathcal{L}$ is nonincreasing on $\R^+$ and hence it is almost everywhere differentiable on $\R^+$. Hence, following an argument from \cite[p.10]{Crin-Bara_Danchin_2021},  \eqref{Ineq_L_H} implies 
\begin{equation}\label{Diff_Ineq}
\ddt \mathcal{L}(t)+c\mathcal{H}(t)\leq 0. 
\end{equation}
Using the interpolation inequalities (\cite{Danchin_EMS,XuXin})
\begin{equation}
\|u\|_{\dot B^{\frac d2-1,L}_{2,1}}
\;\lesssim\;
\left(\|u\|_{\dot B^{\frac d2+1,L}_{2,1}}\right)^{1-\theta_0}
\left(\|u\|_{\dot B^{-\sigma_1,L}_{2,\infty}}\right)^{\theta_0},
\end{equation}
with 
\begin{equation}
-\sigma_1<\frac{d}{2}-1,\qquad \text{and}\qquad (1-\theta_0)\left(1+\frac d2\right)-\sigma_1\theta_0
=\frac d2-1,
\end{equation}
we obtain  by using \eqref{Propagation_regularity}
\begin{equation}\label{L_Interpolation_Est}
\|u(t)\|_{\dot B^{\frac d2+1,L}_{2,1}}
\;\gtrsim\;
\left(\|u(t)\|_{\dot B^{\frac d2-1,L}_{2,1}}\right)^{\frac{1}{1-\theta_0}}
\|u_0\|_{\dot B^{-\sigma_1}_{2,\infty}}^{-\frac{\theta_0}{1-\theta_0}}.
\end{equation}
For the high-frequency part, we have, by using \eqref{Main_Estimate} 
\begin{equation}\label{H_Estimate_interp}
\begin{aligned}
\Vert u\Vert _{\dot{{B}}_{2,1}^{\frac{d}{2}+1 ,H}}=&\, \Vert u\Vert _{\dot{{B}}_{2,1}^{\frac{d}{2}+1 ,H}}^{\frac{1}{1-\theta_0}}\Vert u\Vert _{\dot{{B}}_{2,1}^{\frac{d}{2}+1 ,H}}^{\frac{-\theta_0}{1-\theta_0}}\\
\gtrsim&\,\Vert u\Vert _{\dot{{B}}_{2,1}^{\frac{d}{2}+1 ,H}}^{\frac{1}{1-\theta_0}}\Vert u_0\Vert _{X}^{\frac{-\theta_0}{1-\theta_0}}.
\end{aligned}
\end{equation}
Collecting \eqref{L_Interpolation_Est} and \eqref{H_Estimate_interp}, and using the inequality 
\begin{equation}
(a+b)^p \le (1+\varepsilon)^{p-1} a^p + \left(1+\frac{1}{\varepsilon}\right)^{p-1} b^p,
\quad  1 \le p < \infty,
\end{equation}
which holds for arbitrary $a,b \in \mathbb{R}^+$ and $\varepsilon>0$, we obtain 
\begin{equation}\label{H_L_Est}
\mathcal{H}(t)\geq c \left(\|u_0\|_{\dot B^{-\sigma_1}_{2,\infty}}+\Vert u_0\Vert _{X}\right)^{-\frac{\theta_0}{1-\theta_0}} (\mathcal{L}(t))^{\frac{1}{1-\theta_0}}.
\end{equation}
Combining \eqref{Diff_Ineq} and \eqref{H_L_Est}, we obtain 
\begin{equation}\label{Diff_Ineq_2}
\ddt \mathcal{L}(t)+c_0(\mathcal{L}(t))^{\frac{1}{1-\theta_0}}\leq 0 
\end{equation}
with 
\begin{equation}
c_0=c\left(\|u_0\|_{\dot B^{-\sigma_1}_{2,\infty}}+\Vert u_0\Vert _{X}\right)^{-\frac{\theta_0}{1-\theta_0}}. 
\end{equation}  
Inequality \eqref{Diff_Ineq_2} implies 
\begin{equation}
\mathcal{L}(t)\lesssim \mathcal{L}(0)(1+\dfrac{\theta_0}{1-\theta_0} \mathcal{L}(0)^{\frac{\theta_0}{1-\theta_0}}c_0 t)^{1-\frac{1}{\theta_0}}.
\end{equation}
Using the fact that $\theta_0=\frac{4}{d+2 \sigma_1 +2}\in(0,1)$, we obtain $1-\frac{1}{\theta_0}=\frac{1}{2} (-\frac{d}{2}- \sigma_1 +1)$ and hence,  since $\mathcal{L}(t)\sim \|u(t)\|_{\mathcal{X}}$ we obtain \eqref{Decay_Rate_Besov}. 

 Our goal now is to prove \eqref{Decay_high_Freq}. 
Following the steps leading to \eqref{Higher_Order_Estimate_M_1} we obtain from 
\eqref{First_k_estimate_Non_2_2}, \eqref{Gra_Estima_1}, \eqref{Grad_2} and \eqref{Grad_Est_3} 
\begin{equation}\label{Higher_Order_Estimate_Decay}
\begin{aligned}
\ddt\Vert u(t)\Vert _{\dot{{B}}_{2,1}^{d/2+1 ,H}}+c\Vert
u\Vert _{\dot{{B}}_{2,1}^{d/2+1 ,H}}
\lesssim  \|u\|_{\dot{B}%
_{2,1}^{d/2+1}}^2.  
\end{aligned}
\end{equation}
This yields 
\begin{equation}\label{H_norm_Decay_1}
\begin{aligned}
\Vert u(t)\Vert _{\dot{{B}}_{2,1}^{d/2+1 ,H}}
\lesssim e^{-ct}\Vert u_0\Vert _{\dot{{B}}_{2,1}^{d/2+1 ,H}}+\int_0^t e^{-c(t-s)} \|u(s)\|_{\dot{B}%
_{2,1}^{d/2+1}}^2\ds.
\end{aligned}
\end{equation}
On the other hand we have the estimate  
\begin{equation}\label{Estim_X_norm}
\|u\| _{\dot{{B}}_{2,1}^{d/2+1 }}\lesssim (\|u\| _{\dot{{B}}_{2,1}^{d/2-1 ,L}}+\|u\| _{\dot{{B}}_{2,1}^{d/2+1 ,H}})\lesssim \|u\|_{\mathcal{X}}.
\end{equation}
Let $\alpha_1=\frac{1}{2}(\frac{d}{2}+\sigma_1-1)$.  Multiplying \eqref{H_norm_Decay_1}  by $(1+c_0t)^{2\alpha_1}$ and using  \eqref{Estim_X_norm},  we get 
\begin{equation}\label{H_norm_Decay_2}
\begin{aligned}
(1+c_0t)^{2\alpha_1}\Vert u(t)\Vert _{\dot{{B}}_{2,1}^{d/2+1 ,H}}
\lesssim&\, (1+c_0t)^{2\alpha_1} e^{-ct}\Vert u_0\Vert _{\dot{{B}}_{2,1}^{d/2+1 ,H}}\\
&+\int_0^t \left(\frac{1+c_0t}{1+c_0s }\right)^{2\alpha_1} e^{-c(t-s)} ((1+c_0s)^{2\alpha_1}\|u\|_{\mathcal{X}}^2)\ds. 
\end{aligned} 
\end{equation}
Recalling \eqref{Decay_Rate_Besov}, we have (since $\alpha$ in \eqref{alpha_condition} is small)
\begin{equation}
\sup_{t\geq 0} ((1+c_0t)^{2\alpha_1}\|u\|_{\mathcal{X}}^2)\lesssim \|u_0\|_{X}^2\lesssim \|u_0\|_{X}.
\end{equation}
On the other hand, we have 
\begin{equation}
\int_0^t \left(\frac{1+c_0t}{1+c_0s }\right)^{2\alpha_1} e^{-c(t-s)}ds\lesssim \int_0^t (1+c_0(t-s))^{2\alpha_1}e^{-c(t-s)}\leq C_0
\end{equation}
where $C_0$ depends only on $\alpha_1$ and $c_0$. Hence, putting the above two estimates together, we have from \eqref{H_norm_Decay_2}  
\begin{equation}
\Vert u(t)\Vert _{\dot{{B}}_{2,1}^{d/2+1 ,H}}
\lesssim\,\|u_0\|_{X}(1+c_0t)^{-2\alpha_1}.
\end{equation}
To prove \eqref{Decay_Sigma}, we use interpolation inequality 
\begin{equation}
\Vert u\Vert_{\dot B^{\sigma,L}_{2,1}}\lesssim \Vert u\Vert ^{\nu}_{%
\dot{B}_{2,1}^{\frac{d}{2}-1,L}}\Vert u\Vert_{\dot B^{-\sigma_1}_{2,\infty}}^{1-\nu},\quad \text{with}\quad \nu=\frac{2 \left(\sigma +\sigma _1\right)}{d+2 \sigma _1-2}\in (0,1), 
\end{equation}
together with \eqref{Decay_Rate_Besov} and \eqref{Propagation_regularity} to get 
 \begin{equation}
\Vert u\Vert_{\dot B^{\sigma,L}_{2,1}}\lesssim \Vert u\Vert ^{\nu}_{%
\dot{B}_{2,1}^{\frac{d}{2}-1,L}}\Vert u_0\Vert_{\dot B^{-\sigma_1}_{2,\infty}}^{1-\nu}\lesssim \|u_0\|_X\Vert u_0\Vert_{\dot B^{-\sigma_1}_{2,\infty}}^{1-\nu}(1+c_0 t)^{-\frac{\sigma_1+\sigma}{2}}.  
\end{equation}
This ends the proof of \eqref{Decay_Sigma}. Next, using the interpolation inequality 
\begin{equation}
\|u\|_{\dot B^{-\sigma'}_{2,\infty} }
\;\le\;
\|u\|_{\dot B^{-d/2}_{2,\infty} }^{2\sigma'/d}
\,
\|u\|_{L^2 }^{(d-2\sigma')/d}
\end{equation}
which holds for $\sigma'\in (0,d/2)$. Hence, from \eqref{Propagation_regularity} and \eqref{Estimate_Decay_Main}, we obtain \eqref{Decay_Negative_Besov}.

\end{proof}


\subsection{Optimality of the decay rate for the linear problem}\label{Optimal_Decay_Section}
In this section, we show that if $\int_{\R^d}u_0(x)\dx\neq 0$, then the decay rate in \eqref{Estimate_Decay_Main} is optimal. 
\begin{lemma}\label{Lemma_Opt_Linear}
Assume that there exist $\Lambda_2,\eta>0$ such that $|\hat{u}%
_0(\xi)|\geq \eta$ for $|\xi|\leq \Lambda_2$, then the solution of the linear problem \eqref{Main_problem_Linear} has the following lower bound
\begin{eqnarray}
\left\Vert u(t)\right\Vert _{L^{2}}&\geq
&C_\eta(1+t)^{-\frac{d}{4}},  \label{regularity_gain_estimate_lowe}
\end{eqnarray}%
for any $t\geq 0$ and where $C_\eta$ is a constant
depending on $\eta$ and the fixed parameters.
\end{lemma}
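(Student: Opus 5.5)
The argument works directly on the Fourier side, using the explicit solution formula for the linear problem \eqref{Main_problem_Linear}. As in the proof of Theorem \ref{Theorem_Main_Linear}, we have
$$\hat u(\xi,t)=e^{-\eta(|\xi|)t}\hat u_0(\xi),\qquad \eta(|\xi|)=\frac{|\xi|^2}{1+|\xi|^2}.$$
By Plancherel,
$$\|u(t)\|_{L^2}^2=(2\pi)^{-d}\int_{\R^d}e^{-2\eta(|\xi|)t}|\hat u_0(\xi)|^2\,d\xi .$$
The whole integrand is nonnegative, so we may discard everything outside a small ball and keep only the region $|\xi|\le\Lambda_2$, where the hypothesis gives $|\hat u_0|\ge\eta$. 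To avoid a clash of notation we write the symbol as $\eta(|\xi|)$ and the lower bound on $|\hat u_0|$ as the constant $\eta$.

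On $|\xi|\le \Lambda_2$ the symbol is bounded above by $\eta(|\xi|)\le|\xi|^2$. This gives
$$\|u(t)\|_{L^2}^2\ \ge\ (2\pi)^{-d}\,\eta^2\int_{|\xi|\le\Lambda_2}e^{-2|\xi|^2t}\,d\xi .$$
The right-hand side is then evaluated in two regimes.

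\emph{Case $t\ge1$.} Substitute $\zeta=\sqrt{t}\,\xi$. The integral becomes
$$t^{-d/2}\int_{|\zeta|\le\Lambda_2\sqrt t}e^{-2|\zeta|^2}\,d\zeta\ \ge\ t^{-d/2}\int_{|\zeta|\le\Lambda_2}e^{-2|\zeta|^2}\,d\zeta=c(\Lambda_2,d)\,t^{-d/2}.$$
Since $t\ge1$, we have $t^{-d/2}\ge (1+t)^{-d/2}$.

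\emph{Case $0\le t\le1$.} Here $e^{-2|\xi|^2t}\ge e^{-2\Lambda_2^2}$ on the ball. The integral is therefore at least $|B_{\Lambda_2}|\,e^{-2\Lambda_2^2}$, which in turn is at least $2^{d/2}|B_{\Lambda_2}|e^{-2\Lambda_2^2}(1+t)^{-d/2}$, because $(1+t)^{-d/2}\le 1$.

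Combining the two cases gives $\|u(t)\|_{L^2}^2\ge C_\eta^2(1+t)^{-d/2}$ with $C_\eta\sim\eta\,c(\Lambda_2,d)^{1/2}$. Taking square roots yields \eqref{regularity_gain_estimate_lowe}.

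There is no serious obstacle in this argument. The only points needing care are that the upper bound $\eta(|\xi|)\le |\xi|^2$ holds for all $\xi$, so no constant is lost in the exponent, and the handling of small times. A natural way to obtain the hypothesis of the lemma is as a consequence of $\int_{\R^d} u_0\,dx\ne0$ together with $u_0\in L^1$. In that case $\hat u_0$ is continuous with $\hat u_0(0)\neq0$, so $|\hat u_0|\ge|\hat u_0(0)|/2$ on a small ball, and this links the lemma to the optimality claim for \eqref{Estimate_Decay_Main}.
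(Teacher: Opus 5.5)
Your proof is correct and is essentially the paper's argument: Plancherel, restriction to $|\xi|\le\Lambda_2$, the bound $\frac{|\xi|^2}{1+|\xi|^2}\le|\xi|^2$, and a large-time/small-time split (you split at $t=1$, the paper at $t=\Lambda_2^{-2}$, which changes nothing essential). One slip: in the case $0\le t\le 1$ drop the factor $2^{d/2}$. The fact $(1+t)^{-d/2}\le 1$ gives $|B_{\Lambda_2}|e^{-2\Lambda_2^2}\ge |B_{\Lambda_2}|e^{-2\Lambda_2^2}(1+t)^{-d/2}$, whereas the inequality as you wrote it fails at $t=0$; this affects only the constant $C_\eta$.
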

\begin{proof}
The proof follows the ideas from \cite{Schonb_1991}. We have from \eqref{Main_problem_Linear} by Plancherel identity 
\begin{eqnarray}  \label{Iden_Main}
\left\Vert u(t)\right\Vert
_{L^{2}}&=&\Vert e^{-\frac{\left\vert
\xi \right\vert ^{2}}{1+|\xi|^2}t}\hat{u}_{0}\left( \xi \right) \Vert
_{L^{2}}  \notag \\
&\geq&\left(\int_{\left\vert \xi \right\vert \leq \Lambda_2} e^{-2\frac{\left\vert \xi \right\vert ^{2}}{1+|\xi|^2}t}|%
\hat{u}_{0}|^2\left( \xi \right)d\xi\right)^{1/2}  \notag \\
&\geq&\eta\left(\int_{\left\vert \xi \right\vert \leq \Lambda_2}e^{-2|\xi|^2t}d\xi\right)^{1/2},
\end{eqnarray}
where we have used the fact that $|\xi|^2/(1+|\xi|^2)\leq |\xi|^2$. We
rewrite the integral in the last formula as: 
\begin{equation}
\begin{aligned}
\int_{\left\vert \xi \right\vert \leq \Lambda_2}e^{-2|\xi|^2t}d\xi=&\,\int_{0}^{\Lambda_2} r
^{d-1}e^{-2r^2t}dr =C_1 t^{-d/2}\int_{0}^{\Lambda_2\sqrt{t}} y
^{d-1}e^{-2y^2}dy\\
\geq&\, C_1 t^{-d/2}\int_{0}^{1} y
^{d-1}e^{-2y^2}dy,\qquad \text{for}\qquad t\geq \Lambda_2^{-2}\\
\gtrsim&\,  (1+t)^{-d/2}.
\end{aligned}
\end{equation}
For $t<\Lambda_2^{-2}$, we have 
\begin{equation}
\int_{0}^{\Lambda_2} r
^{d-1}e^{-2r^2t}dr\geq (1+t)^{-d/2}\int_{0}^{\Lambda_2} 
r ^{d-1}e^{-2r^2\Lambda_2^{-2}}dr.
\end{equation}
Set
\begin{equation}
C_0=\min\left\{C_1, \int_{0}^{\Lambda_2} r
^{d-1}e^{-2r^2\Lambda_2^{-2}}dr\right\},
\end{equation}
We obtain that 
\begin{eqnarray}
\left\Vert u(t)\right\Vert
_{L^{2}}\gtrsim \left(\int_{\left\vert \xi \right\vert \leq \Lambda_2}e^{-2|\xi|^2t}d\xi\right)^{1/2}\gtrsim(1+t)^{-d/4}.
\end{eqnarray}
From this inequality and \eqref{Iden_Main}, we deduce %
\eqref{regularity_gain_estimate_lowe}.\end{proof}

\subsection{Optimality of the decay rate for the nonlinear problem}
\label{Optimal_Decay_Section_NL}
We set
\[
m(\xi):=\frac{|\xi|^2}{1+|\xi|^2}.
\]

\begin{lemma}[Optimality of the nonlinear decay rate]
\label{Nonlinear_Optimal_Decay_Lemma}
Let \(d\geq 2\), $f(u)=bu^2$ with $b\in \R^d\setminus\{0\}$ and assume that the hypothesis of Theorem \ref{Main_Theorem_Nonl} and Theorem \ref{theorem_Decay} hold. Let \(u\) be a global solution of the nonlinear problem satisfying
\[
\|u(t)\|_{L^2}\lesssim (1+t)^{-\frac d4}.
\]
Assume that there exist \(\eta,\Lambda_2>0\) such that
\[
|\widehat u_0(\xi)|\geq \eta,
\qquad \text{ for}\quad  |\xi|\leq \Lambda_2.
\]
Then there exist \(c_\eta>0\) and \(T_\eta>0\) such that
\[
\|u(t)\|_{L^2}\geq c_\eta (1+t)^{-\frac d4},
\qquad t\geq T_\eta.
\]
Consequently, the decay rate \((1+t)^{-d/4}\) is optimal.
\end{lemma}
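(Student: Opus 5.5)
We write the solution through Duhamel's formula in Fourier variables and compare it with the linear evolution, for which Lemma \ref{Lemma_Opt_Linear} already gives the lower bound. With $f(u)=bu^2$ we have
\[
\widehat u(\xi,t)=e^{-m(\xi)t}\widehat u_0(\xi)-\int_0^t e^{-m(\xi)(t-s)}\, i\xi\cdot b\,\widehat{u^2}(\xi,s)\,ds .
\]
We restrict attention to the low-frequency ball $|\xi|\le \Lambda_2$ (we may take $\Lambda_2\le 1$, so that $m(\xi)\sim|\xi|^2$ there). This gives
\[
\|u(t)\|_{L^2}\ge \|\widehat u(t)\|_{L^2(|\xi|\le\Lambda_2)}\ge \|e^{-m t}\widehat u_0\|_{L^2(|\xi|\le\Lambda_2)}-\|N(t)\|_{L^2(|\xi|\le\Lambda_2)},
\]
where $N$ is the Duhamel term. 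The first term is bounded below by $C_\eta(1+t)^{-d/4}$, exactly as in the proof of Lemma \ref{Lemma_Opt_Linear}.

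The task is then to show $\|N(t)\|_{L^2(|\xi|\le\Lambda_2)}=o((1+t)^{-d/4})$, or at least that it is smaller than $\tfrac12C_\eta(1+t)^{-d/4}$ for $t\ge T_\eta$. We use $|\widehat{u^2}(\xi,s)|\le\|u(s)\|_{L^2}^2\lesssim(1+s)^{-d/2}$ and $|\xi\cdot b|\le|b||\xi|$, which give
\[
\|N(t)\|_{L^2(|\xi|\le\Lambda_2)}\lesssim\int_0^t(1+s)^{-d/2}\Big(\int_{|\xi|\le\Lambda_2}|\xi|^2e^{-2c|\xi|^2(t-s)}d\xi\Big)^{1/2}ds\lesssim\int_0^t(1+s)^{-d/2}(1+t-s)^{-\frac d4-\frac12}ds .
\]
We bound this convolution integral by splitting it into $s\le t/2$ and $s\ge t/2$.

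\emph{Case $d\ge3$.} Since $(1+s)^{-d/2}$ is integrable, the part $s\le t/2$ is $O((1+t)^{-d/4-1/2})$. The part $s\ge t/2$ is $O((1+t)^{-d/2}\cdot(1+t)^{1/2-d/4})$, which equals $O((1+t)^{-d/4-(d-1)/2})$ because $d/4+1/2<1$ for $d\le 1$ fails. Both parts are $o((1+t)^{-d/4})$.

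\emph{Case $d=2$.} The part $s\le t/2$ produces $\log(1+t)(1+t)^{-1}$, which is not small compared with $(1+t)^{-1/2}$. This is the main obstacle, and we handle it in two ways.
\begin{itemize}
\item We first try to use the extra decay of $\|u\|_{L^2}$ coming from Theorem \ref{Theorem_Decay_2} and the interpolation remarks, namely better integrability of $\|u(s)\|_{L^2}^2$.
\item Alternatively, we estimate $|\widehat{u^2}(\xi,s)|\le\|u(s)\|_{L^2}^2$ but keep the full factor $|\xi|e^{-c|\xi|^2(t-s)}$. For $d=2$ the $\xi$-integral gives $(1+t-s)^{-1}$, so the $L^2$ bound yields $\int_0^t(1+s)^{-1}(1+t-s)^{-1}ds\sim \log(1+t)/(1+t)$, which is still $o((1+t)^{-1/2})$.
\end{itemize}
Since $(1+t)^{-d/4}=(1+t)^{-1/2}$ when $d=2$, the bound $\log(1+t)(1+t)^{-1}$ is indeed $o((1+t)^{-1/2})$, so in fact all dimensions $d\ge2$ work with the crude bound. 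It remains to check that the implicit constants do not matter, which holds because the nonlinear term has strictly faster decay.

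We then choose $T_\eta$ so that $\|N(t)\|_{L^2(|\xi|\le\Lambda_2)}\le\tfrac12C_\eta(1+t)^{-d/4}$ for all $t\ge T_\eta$. This yields $\|u(t)\|_{L^2}\ge\tfrac12C_\eta(1+t)^{-d/4}$ for $t\ge T_\eta$. Combined with Theorem \ref{theorem_Decay}, it shows that the rate $(1+t)^{-d/4}$ is optimal.
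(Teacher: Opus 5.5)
Your proof is correct, but it takes a different and more direct route than the paper.

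\textbf{Your route.} You apply Duhamel's formula to $\widehat u$ only on the ball $|\xi|\le\Lambda_2$ (shrunk so that $\Lambda_2\le 1$). You bound the source pointwise by $|b|\,|\xi|\,\|u(s)\|_{L^2}^2\lesssim|\xi|(1+s)^{-d/2}$ and integrate the heat-type kernel in $\xi$. This shows that the nonlinear contribution on the ball is $O((1+t)^{-d/4-1/2})$ for $d\ge3$ and $O((1+t)^{-1}\log(1+t))$ for $d=2$. That suffices, because the linear lower bound of Lemma \ref{Lemma_Opt_Linear} is produced by that same ball.

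\textbf{The paper's route.} The paper sets $w=u-v$ and performs an $L^2$ energy estimate. The cancellation $\int u\,\nabla\cdot f(u)\,dx=0$ turns the source into $-2\int\nabla v\cdot f(u)\,dx$. It then uses the linear decay $\|\nabla v(t)\|_{L^\infty}\lesssim(1+t)^{-d/2-1/2}$, which requires $u_0\in\dot B^{-d/2}_{2,\infty}\cap\dot B^{d/2+1}_{2,1}$. Finally it runs Schonbek's Fourier splitting on $(1+t)^d\|w\|_{L^2}^2$, where the pointwise Duhamel bound on $\widehat w$ is needed only on the shrinking set $S(t)$. This gives $\|w(t)\|_{L^2}\lesssim(1+t)^{-d/4-1/4+\varepsilon}$.

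\textbf{What each buys.} The paper controls the full remainder, i.e.\ it shows that $u$ and $v$ are asymptotically equivalent in $L^2$, which is more than the lower bound needs. Yours controls only the low-frequency part of the remainder. In exchange it is shorter, and it needs neither the cancellation nor the decay of $\nabla v$: only the assumed $L^2$ decay and $|f(u)|\lesssim u^2$. It also gives a better rate on the ball.

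\textbf{Points to clean up.} Neither affects the conclusion.
\begin{enumerate}
\item For $d\ge3$, the piece $s\ge t/2$ is $O((1+t)^{-d/2})$, not $O((1+t)^{-d/2}(1+t)^{1/2-d/4})$. Since $d/4+1/2>1$, the integral $\int_{t/2}^t(1+t-s)^{-d/4-1/2}\,ds$ stays bounded rather than decaying, and your stated justification is garbled. The bound $(1+t)^{-d/2}$ is still $o((1+t)^{-d/4})$.
\item For $d=2$ there is no obstacle. The identity $\int_0^t(1+s)^{-1}(1+t-s)^{-1}\,ds=\frac{2\log(1+t)}{2+t}$ covers the whole integral and is $o((1+t)^{-1/2})$. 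Your first bullet should be deleted: there is no extra $L^2$ decay of $u$ to exploit, since the lemma asserts precisely that $(1+t)^{-d/4}$ is sharp.
\end{enumerate}
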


\begin{proof}
Let \(v\) be the solution of the linear problem
\[
\left\{
\begin{array}{ll}
v_t-\Delta P v=0,\\
v(x,0)=u_0(x).
\end{array}
\right.
\]
By the linear lower bound proved in the previous section (Lemma \ref{Lemma_Opt_Linear}), the assumption
\[
|\widehat u_0(\xi)|\geq \eta,
\qquad |\xi|\leq \Lambda_2,
\]
implies that
\begin{equation}
\label{Lower_Bound_v_NL}
\|v(t)\|_{L^2}\geq c_\eta (1+t)^{-\frac d4}.
\end{equation}
We now set $w:=u-v.$ Then \(w\) satisfies
\begin{equation}
\label{w_Equation}
\left\{
\begin{array}{ll}
w_t-\Delta P w=-\nabla\cdot f(u),\\
w(x,0)=0.
\end{array}
\right.
\end{equation}
We shall prove that \(w\) decays faster than \(v\). Multiplying
\eqref{w_Equation} by \(w\), integrating over \(\mathbb R^d\), and using
Plancherel's identity, we obtain
\begin{equation}
\label{Energy_w}
\begin{aligned}
\ddt\|w(t)\|_{L^2}^2
+
2\int_{\mathbb R^d}
m(\xi)|\widehat w(\xi)|^2\,d\xi
&=
-2\int_{\mathbb R^d} w\,\nabla\cdot f(u)\,dx \\
&=
-2\int_{\mathbb R^d} (u-v)\,\nabla\cdot f(u)\,dx .
\end{aligned}
\end{equation}
Using the cancellation
\[
\int_{\mathbb R^d} u\,\nabla\cdot f(u)\,dx=0,
\]
we get
\[
-2\int_{\mathbb R^d} (u-v)\,\nabla\cdot f(u)\,dx
=
2\int_{\mathbb R^d} v\,\nabla\cdot f(u)\,dx
=
-2\int_{\mathbb R^d} \nabla v\cdot f(u)\,dx.
\]
Therefore, since $f(u)=bu^2$,
\begin{equation}
\label{Energy_w_1}
\ddt\|w(t)\|_{L^2}^2
+
2\int_{\mathbb R^d}
m(\xi)|\widehat w(\xi)|^2\,d\xi
\lesssim
\|\nabla v(t)\|_{L^\infty}\|u(t)\|_{L^2}^2.
\end{equation}

We now rely on the Fourier splitting method developed in \cite{Schonb_1991}. Define
\[
F(t):=(1+t)^d\|w(t)\|_{L^2}^2.
\]
Multiplying \eqref{Energy_w_1} by \((1+t)^d\), we obtain
\begin{equation}
\label{Energy_w_2}
\begin{aligned}
F'(t)
\leq{}&
d(1+t)^{d-1}\|w(t)\|_{L^2}^2
-
2(1+t)^d
\int_{\mathbb R^d}
m(\xi)|\widehat w(\xi)|^2\,d\xi \\
&+
C(1+t)^d
\|\nabla v(t)\|_{L^\infty}
\|u(t)\|_{L^2}^2 .
\end{aligned}
\end{equation}
For \(t\geq T_0\), with \(T_0\) large enough, define
\begin{equation}
\label{definition_S_t}
S(t):=
\left\{
\xi\in\mathbb R^d:
m(\xi)\leq \frac{d}{2(1+t)}
\right\}.
\end{equation}
Using the fact that 
$
m(\xi)\leq \frac{d}{2(1+t)}
$
implies for $t\geq T_0$ that 
$$|\xi|^2\leq \frac{d}{2(1+t)-d}\lesssim (1+t)^{-1}.$$
Then \(S(t)\) is contained in a ball of radius \(C(1+t)^{-1/2}\). Hence
\begin{equation}
\label{volume_S_t}
|S(t)|\lesssim (1+t)^{-\frac d2},
\qquad
|\xi|\lesssim (1+t)^{-1/2}
\quad\text{for }\xi\in S(t).
\end{equation}
Moreover, on \(S(t)^c\), we have
\[
2(1+t)^d m(\xi)
\geq
d(1+t)^{d-1}.
\]
Therefore, from \eqref{Energy_w_2}, for \(t\geq T_0\),
\begin{equation}
\label{Energy_w_3}
F'(t)
\lesssim
(1+t)^{d-1}
\int_{S(t)}|\widehat w(\xi,t)|^2\,d\xi
+
(1+t)^d
\|\nabla v(t)\|_{L^\infty}
\|u(t)\|_{L^2}^2 .
\end{equation}
Using the linear decay estimate and the embedding
\[
\dot B^{\frac d2}_{2,1}\hookrightarrow L^\infty,
\]
we have
\begin{equation}
\label{Decay_v}
\|\nabla v(t)\|_{L^\infty}
\lesssim
\|v(t)\|_{\dot B^{\frac d2+1}_{2,1}}
\lesssim
(1+t)^{-\frac d2-\frac12}
\left(
\|u_0\|_{\dot B^{-d/2}_{2,\infty}}
+
\|u_0\|_{\dot B^{d/2+1}_{2,1}}
\right).
\end{equation}
Together with
\[
\|u(t)\|_{L^2}^2\lesssim (1+t)^{-\frac d2},
\]
this yields
\begin{equation}
\label{source_term_estimate}
(1+t)^d
\|\nabla v(t)\|_{L^\infty}
\|u(t)\|_{L^2}^2
\lesssim
(1+t)^{-\frac12}.
\end{equation}
Thus
\begin{equation}
\label{Energy_w_4}
F'(t)
\lesssim
(1+t)^{d-1}
\int_{S(t)}|\widehat w(\xi,t)|^2\,d\xi
+
(1+t)^{-\frac12}.
\end{equation}
It remains to estimate the low-frequency part of \(\widehat w\). Taking the
Fourier transform of \eqref{w_Equation}, we obtain
\[
\partial_t\widehat w(\xi,t)
+
m(\xi)\widehat w(\xi,t)
=
-\widehat{\nabla\cdot f(u)}(\xi,t),
\qquad
\widehat w(\xi,0)=0.
\]
Hence
\begin{equation}
\label{w_Duhamel}
\widehat w(\xi,t)
=
-\int_0^t
e^{-m(\xi)(t-s)}
\widehat{\nabla\cdot f(u)}(\xi,s)\,ds.
\end{equation}
Since \(f(u)=u^2\), Young's convolution inequality gives
\[
|\widehat{\nabla\cdot f(u)}(\xi,s)|
\lesssim
|\xi|\,|\widehat u(\cdot,s)\ast \widehat u(\cdot,s)|(\xi)
\lesssim
|\xi|\,\|u(s)\|_{L^2}^2.
\]
Using the decay estimate for \(u\), we obtain
\begin{equation}
\label{Fourier_source_bound}
|\widehat{\nabla\cdot f(u)}(\xi,s)|
\lesssim
|\xi|(1+s)^{-\frac d2}.
\end{equation}
For \(\xi\in S(t)\), \eqref{volume_S_t} gives
\[
|\xi|\lesssim (1+t)^{-1/2}.
\]
Therefore, by \eqref{w_Duhamel} and \eqref{Fourier_source_bound},
\[
|\widehat w(\xi,t)|
\lesssim
(1+t)^{-1/2}
\int_0^t (1+s)^{-\frac d2}\,ds.
\]
Since \(d\geq 2\), for every \(\varepsilon>0\),
\[
\int_0^t (1+s)^{-\frac d2}\,ds
\lesssim
(1+t)^\varepsilon.
\]
Consequently,
\begin{equation}
\label{low_frequency_w_hat_bound}
|\widehat w(\xi,t)|
\lesssim 
(1+t)^{-\frac12+\varepsilon},
\qquad
\xi\in S(t).
\end{equation}
Combining \eqref{low_frequency_w_hat_bound} with \eqref{volume_S_t}, we get
\[
\int_{S(t)}|\widehat w(\xi,t)|^2\,d\xi
\lesssim 
(1+t)^{-1+2\varepsilon}|S(t)|
\lesssim
(1+t)^{-1-\frac d2+2\varepsilon}.
\]
Therefore, from \eqref{Energy_w_4},
\[
F'(t)
\lesssim 
(1+t)^{d-1}
(1+t)^{-1-\frac d2+2\varepsilon}
+
(1+t)^{-\frac12}.
\]
Hence
\begin{equation}
\label{F_prime_estimate}
F'(t)
\lesssim 
(1+t)^{\frac d2-2+2\varepsilon}
+
(1+t)^{-\frac12}.
\end{equation}
Integrating \eqref{F_prime_estimate} from $0$ to \(t\), we obtain
\[
F(t)
\lesssim 
1
+
(1+t)^{\frac d2-1+2\varepsilon}
+
(1+t)^{\frac12}.
\]
Since \(F(t)=(1+t)^d\|w(t)\|_{L^2}^2\), this implies
\[
\|w(t)\|_{L^2}^2
\lesssim 
(1+t)^{-d}
+
(1+t)^{-\frac d2-1+2\varepsilon}
+
(1+t)^{-d+\frac12}.
\]
For \(d\geq 2\), this gives
\begin{equation}
\label{w_decay}
\|w(t)\|_{L^2}
\lesssim 
(1+t)^{-\frac d4-\frac14+\varepsilon}.
\end{equation}
Choosing \(0<\varepsilon<1/4\), we obtain a strictly faster decay than
\((1+t)^{-d/4}\).

Finally, combining \eqref{Lower_Bound_v_NL} and \eqref{w_decay}, we have
\[
\|u(t)\|_{L^2}
\geq
\|v(t)\|_{L^2}-\|w(t)\|_{L^2}
\geq
c_\eta (1+t)^{-\frac d4},
\]
for all \(t\geq T_\eta\), with \(T_\eta\) large enough. This proves the
lemma.
\end{proof}

\begin{remark}
The assumption
\[
|\widehat u_0(\xi)|\geq \eta,
\qquad |\xi|\leq \Lambda_2,
\]
is implied by \(u_0\in L^1(\mathbb R^d)\) and
\[
\int_{\mathbb R^d}u_0(x)\,dx=\widehat u_0(0)\neq 0,
\]
after possibly decreasing \(\eta\) and \(\Lambda_2\), since \(\widehat u_0\) is
continuous near the origin. Under this assumption, similar optimality results
for the Navier--Stokes equations were proved in
\cite{Schonbek_1986,Schonb_1991}.
\end{remark}

\section{Faster time-decay estimates}\label{Section_Improved_Decay}
 In this section, we show that it is possible to obtain a faster decay rate for initial data $u_0 \in \dot{B}^{-d/2-1}_{2,\infty}(\mathbb{R}^d)$ that satisfies $\int_{\R^d}u_0(x)dx=0$.  This condition provides a low-frequency cancellation and yields an improved decay rate for the linearized problem. We then show that this faster decay rate extends to the nonlinear problem thanks to the divergence structure of the nonlinear term.
\subsection{Faster decay estimates for the linear system}
In this section, we show that it is possible to obtain a better decay rate in the linear problem for a special class of initial data. The key idea is that for sufficiently small frequencies, and if $\hat{u}_0(0)=0$, then we have $|\hat{u}_0(\xi)|\lesssim |\xi|$. This low frequency cancellation provides an extra factor of $|\xi|$ compared to the case $\hat{u}(0)\neq 0$, resulting 
 in  an improved  decay rate of the solution by a factor $t^{-1/2}$. 
\begin{lemma}\label{Lemma_weighted}
Assume that 
\[
u_0\in L^2,\quad|x|u_0 \in L^1(\mathbb{R}^d)
\quad \text{and} \quad
\int_{\mathbb{R}^d} u_0(x)\,dx = 0,
\]
then
\[
u_0 \in \dot{B}^{-d/2-1}_{2,\infty}(\mathbb{R}^d).
\]
\end{lemma}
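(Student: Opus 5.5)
The plan is to work on the Fourier side and bound each dyadic block directly, using the zero-mass condition to gain one power of $|\xi|$ at low frequencies. We must show
\[
\sup_{k\in\mathbb Z}2^{-k(\frac d2+1)}\|\dot\Delta_k u_0\|_{L^2}<\infty .
\]
By Plancherel,
\[
\|\dot\Delta_k u_0\|_{L^2}\simeq\|\varphi(2^{-k}\xi)\widehat{u}_0\|_{L^2},
\]
and $\varphi(2^{-k}\cdot)$ is supported in the annulus $|\xi|\sim 2^k$, whose volume is $\lesssim 2^{kd}$. We treat low frequencies $k\le 0$ and high frequencies $k>0$ separately.

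\textbf{Low frequencies.} Since $\int_{\mathbb R^d}u_0\,dx=\widehat{u}_0(0)=0$, we can write
\[
\widehat{u}_0(\xi)=\int_{\mathbb R^d}\bigl(e^{-ix\cdot\xi}-1\bigr)u_0(x)\,dx .
\]
The elementary bound $|e^{-ix\cdot\xi}-1|\le|x||\xi|$ then gives the pointwise estimate
\[
|\widehat{u}_0(\xi)|\le|\xi|\,\||x|u_0\|_{L^1}.
\]
On the annulus $|\xi|\sim 2^k$ this yields
\[
\|\varphi(2^{-k}\xi)\widehat{u}_0\|_{L^2}\lesssim 2^k\||x|u_0\|_{L^1}\,(2^{kd})^{1/2}=2^{k(\frac d2+1)}\||x|u_0\|_{L^1}.
\]
Hence $2^{-k(\frac d2+1)}\|\dot\Delta_k u_0\|_{L^2}\lesssim\||x|u_0\|_{L^1}$ uniformly in $k$. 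This argument is uniform over all $k\in\mathbb Z$, but we only use it for $k\le0$.

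\textbf{High frequencies.} For $k>0$ we simply use $\|\dot\Delta_k u_0\|_{L^2}\lesssim\|u_0\|_{L^2}$. Since $2^{-k(\frac d2+1)}\le1$ in this range, we get $2^{-k(\frac d2+1)}\|\dot\Delta_k u_0\|_{L^2}\lesssim\|u_0\|_{L^2}$.

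Taking the supremum over $k$ gives
\[
\|u_0\|_{\dot B^{-d/2-1}_{2,\infty}}\lesssim\||x|u_0\|_{L^1}+\|u_0\|_{L^2}.
\]

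The main delicate point is making the Fourier transform of $u_0$ meaningful as a continuous function with $\widehat{u}_0(0)=0$, and checking that $u_0\in\mathcal S_h'$. The hypotheses $|x|u_0\in L^1$ and $u_0\in L^2$ together give $u_0\in L^1$. Indeed, split $\mathbb R^d$ into $\{|x|\le1\}$ and $\{|x|>1\}$. On $\{|x|\le1\}$, Cauchy--Schwarz and $u_0\in L^2$ control $\int|u_0|$. On $\{|x|>1\}$, we have $\int|u_0|\le\int|x||u_0|$.

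Therefore $\widehat{u}_0$ is continuous and the zero-mass condition makes sense pointwise. Moreover, $u_0\in L^2$ implies $\dot S_j u_0\to0$ in $L^\infty$ as $j\to-\infty$, which gives $u_0\in\mathcal S_h'$. With these checks in place, the dyadic estimates above complete the proof.
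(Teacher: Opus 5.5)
Your proof is correct and follows the same route as the paper: the zero-mass condition together with $|x|u_0\in L^1$ gives $|\widehat u_0(\xi)|\le|\xi|\,\||x|u_0\|_{L^1}$, which yields the $2^{k(\frac d2+1)}$ bound on the low-frequency blocks, while $u_0\in L^2$ handles $k>0$. Your extra checks fill in details the paper leaves implicit: that $u_0\in L^1$ (so $\widehat u_0$ is continuous and $\widehat u_0(0)=0$ is meaningful) and that $u_0\in\mathcal S_h'$. You also correctly observe that the low-frequency bound actually holds uniformly for every $k\in\mathbb Z$.
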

\begin{proof}
The condition $|x|u_0 \in L^1(\mathbb{R}^d)$ implies that $\widehat{u}_0$ is Lipschitz continuous near the origin, namely
\[
|\widehat{u}_0(\xi)-\widehat{u}_0(0)| \lesssim |\xi|\,\||x|u_0\|_{L^1}.
\]
If in addition $\int_{\mathbb{R}^d} u_0(x)\,dx=0$, then $\widehat{u}_0(0)=0$ and thus
\begin{equation}\label{Low_Estimate}
|\widehat{u}_0(\xi)| \lesssim |\xi| \quad \text{for } |\xi|\ll 1.
\end{equation}  
Using Plancherel's theorem together with \eqref{Low_Estimate}, we have for $|\xi|\approx 2^q\leq 1$
\begin{equation}
\|\Delta_q u_0\|_{L^2}
\lesssim
\||x|u_0\|_{L^1}\left(
\int_{|\xi|\lesssim 2^q}
|\xi|^2
\,d\xi
\right)^{1/2}\lesssim 2^{\frac{q(d+2)}{2}}.
\end{equation}
Hence, 
\begin{equation}
\sup_{q\leq 0} 2^{-q(\frac{d}{2}+1)}\|\Delta_q u_0\|_{L^2}<\infty.
\end{equation}
For $q>0,$ one uses that $u_0\in L^2$:
\begin{equation}
 2^{-q(\frac{d}{2}+1)}\|\Delta_q u_0\|\leq \|u_0\|_{L^2}.
\end{equation}

This yields the result that 
\[
u_0 \in \dot B^{-d/2-1}_{2,\infty}(\mathbb{R}^d),
\]
and consequently
\[
\nabla u_0 \in \dot B^{-d/2}_{2,\infty}(\mathbb{R}^d).
\]
\end{proof}

We now justify an improved decay rate for the linearized problem \eqref{Main_problem_Linear} which can be deduced  from Theorem \ref{Theorem_Main_Linear}. 
\begin{proposition}
Let $\sigma>-\frac{d}{2}-1$. Assume that  $u_0\in \dot{B}^{-\frac{d}{2}-1}_{2,\infty}\cap \dot{B}%
^{\sigma}_{2,r}$.
The following decay estimate holds: 
\begin{equation}  \label{Main_Decay_estimate_improved}
\Vert u(t)\Vert_{\dot{B}^{\sigma}_{2,r}}\leq C (1+t)^{-\frac{\sigma+d/2+1}{2}%
}\Vert u_0 \Vert_{\dot{B}_{2,\infty}^{-\frac{d}{2}-1}}+Ce^{-\lambda t}\Vert u_0 \Vert_{%
\dot{B}_{2,r}^{\sigma}},
\end{equation}
where $\lambda$ and $C$ are positive constants.
\end{proposition}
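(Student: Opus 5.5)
This estimate is exactly Theorem \ref{Theorem_Main_Linear} with the choice $s=\frac{d}{2}+1$. We first check the hypothesis of that theorem. It requires $\sigma+s>0$, which here reads $\sigma+\frac d2+1>0$, i.e. $\sigma>-\frac d2-1$, and this is precisely our assumption. With this $s$, the space $\dot B^{-s}_{2,\infty}$ is $\dot B^{-\frac d2-1}_{2,\infty}$, and the exponent $-\frac{\sigma+s}{2}$ becomes $-\frac{\sigma+d/2+1}{2}$. So \eqref{Main_Decay_estimate} gives \eqref{Main_Decay_estimate_improved} directly.

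For completeness, we indicate why nothing in that proof depends on $s\le d/2$. After localization and Fourier transform, $\widehat{\dot\Delta_k u}(\xi,t)=e^{-\eta(|\xi|)t}\widehat{\dot\Delta_k u_0}(\xi)$ with $\eta(|\xi|)=|\xi|^2/(1+|\xi|^2)$.

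\textbf{Low frequencies, $k<0$.} Here $\eta\gtrsim|\xi|^2$, so
\begin{equation*}
2^{k\sigma}\|\dot\Delta_k u(t)\|_{L^2}\lesssim (2^k)^{\sigma+s}e^{-c2^{2k}t}\,2^{-ks}\|\dot\Delta_k u_0\|_{L^2}\le (2^k)^{\sigma+s}e^{-c2^{2k}t}\|u_0\|_{\dot B^{-s}_{2,\infty}}.
\end{equation*}
We then need
\begin{equation*}
\big\|(2^k)^{\sigma+s}e^{-c2^{2k}t}\big\|_{\ell^r(k<0)}\lesssim(1+t)^{-\frac{\sigma+s}{2}}.
\end{equation*}
For $t\ge1$, we write $(2^k)^{\sigma+s}=t^{-\frac{\sigma+s}{2}}(2^k\sqrt t)^{\sigma+s}$. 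The sequence $(2^k\sqrt t)^{a}e^{-c(2^k\sqrt t)^2}$ with $a=\sigma+s>0$ is bounded in $\ell^1$, hence in $\ell^r$, uniformly in $t$. This is because it decays geometrically as $2^k\sqrt t\to0$ (since $a>0$) and super-exponentially as $2^k\sqrt t\to\infty$. For $t\le1$, we simply use $\sum_{k<0}2^{ka}<\infty$.

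\textbf{High frequencies, $k\ge0$.} Here $\eta\ge\frac12$, which gives $e^{-t/2}\|u_0\|_{\dot B^\sigma_{2,r}}$ after the weighted $\ell^r$ summation, exactly as in \eqref{Estimate_2}.

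The only point needing any care is the uniform $\ell^r$ bound above. It uses only positivity of $\sigma+s$, which is guaranteed by the hypothesis $\sigma>-\frac d2-1$. There is therefore no genuine obstacle, and the proposition is a corollary of Theorem \ref{Theorem_Main_Linear}. When combined with Lemma \ref{Lemma_weighted}, it yields the extra factor $(1+t)^{-1/2}$ over the rate in \eqref{Main_Decay_estimate} obtained with $s=d/2$, for zero-mass data with $|x|u_0\in L^1$.
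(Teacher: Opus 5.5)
Your proposal is correct and follows the paper's route. The paper likewise deduces this proposition from Theorem \ref{Theorem_Main_Linear} with $s=\frac d2+1$, for which $\sigma+s>0$ is exactly $\sigma>-\frac d2-1$. Your recheck of the low/high-frequency summation faithfully reproduces that theorem's proof. One cosmetic point: on the $k=0$ block (where $|\xi|\ge \frac34$) one only has $\eta\ge \frac{9}{25}$ rather than $\eta\ge\frac12$, which merely changes the value of $\lambda$.
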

\begin{remark}
From \eqref{Estimate_L_p} and by choosing $s=d/2+1$, we have 
\begin{equation}  \label{Estimate_L_p_improved}
\begin{aligned}
\Vert u(t)\Vert_{L^p}
\leq C(\Vert u_0 \Vert_{\dot{B}_{2,\infty}^{-d/2-1}}+\Vert u_0 \Vert_{\dot{B}%
_{2,1}^{\sigma}}) (1+t)^{-\frac{d}{2}(1-1/p)-1/2}.
\end{aligned}  
\end{equation}
In particular, for $p=2$, this gives 
\begin{equation}  \label{Estimate_L_2_improved}
\begin{aligned}
\Vert u(t)\Vert_{L^2}
\leq C(\Vert u_0 \Vert_{\dot{B}_{2,\infty}^{-d/2-1}}+\Vert u_0 \Vert_{\dot{B}%
_{2,1}^{\sigma}}) (1+t)^{-\frac{d}{4}-\frac{1}{2}}.
\end{aligned}
\end{equation}
Both estimates \eqref{Estimate_L_2_improved} and \eqref{Estimate_L_p_improved} give an improved decay rate by a factor of $(1+t)^{-1/2}$.     
\end{remark}

\subsection{Faster decay estimates for the nonlinear problem}
\label{Subsection_Faster_Decay_Nonlinear}

Extending \eqref{Main_Decay_estimate_improved} to the nonlinear problem is more delicate than in the linear case. Indeed, the propagation of the negative Besov norm
\(\dot B^{-d/2-1}_{2,\infty}\) is not automatic for the nonlinear equation. We must rely on the divergence structure of the nonlinear term. The low-frequency part of the Duhamel term is controlled by the \(L^2\)-decay estimate \eqref{Estimate_Decay_Main}, while the high-frequency part is controlled by the decay estimates of Theorem \ref{Theorem_Decay_2}.

\begin{proposition}
\label{Prop_Nonlinear_Improved_Decay}
Let \(d\geq 3\), \(1\leq r\leq \infty\), and let $-\frac d2-1<\sigma\leq \frac d2-1$.
Set
\[
a_\sigma:=\frac{\sigma+d/2+1}{2}.
\]
Let \(u\) be the global solution of \eqref{Main_problem} obtained in Theorem
\ref{Main_Theorem_Nonl}. Assume that
\[
u_0\in
\dot B^{-d/2-1}_{2,\infty}
\] and that the assumptions of Theorem \ref{theorem_Decay}
and Theorem \ref{Theorem_Decay_2} are satisfied with \(\sigma_1=d/2\). Then
\begin{equation}
\label{Nonlinear_Improved_Decay_Estimate}
\|u(t)\|_{\dot B^\sigma_{2,r}}
\lesssim 
(1+t)^{-\frac{\sigma+d/2+1}{2}}.
\end{equation}
\end{proposition}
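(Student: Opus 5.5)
We argue from the Duhamel formula
\[
u(t)=e^{t\Delta P}u_0-\int_0^t e^{(t-s)\Delta P}\,\nabla\cdot f(u(s))\,ds,
\]
localized with $\dot\Delta_k$ and split into low frequencies ($k\le 0$) and high frequencies ($k>0$). The linear part is handled by the Proposition above: it gives \eqref{Main_Decay_estimate_improved}, i.e. the bound $(1+t)^{-a_\sigma}\|u_0\|_{\dot B^{-d/2-1}_{2,\infty}}+e^{-\lambda t}\|u_0\|_{\dot B^{\sigma}_{2,r}}$. For $\sigma\le d/2-1$ the high-frequency part of $\|u_0\|_{\dot B^\sigma_{2,r}}$ is controlled by $\|u_0\|_X$, and the low-frequency part is controlled by interpolating between $\dot B^{-d/2-1}_{2,\infty}$ and $\dot B^{d/2-1,L}_{2,1}$. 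So it remains to bound the Duhamel term $W(t)$.

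\textbf{Low frequencies.} By Lemma \ref{Lemma_heat_Semi_Group},
\[
\|\dot\Delta_k W(t)\|_{L^2}\lesssim \int_0^t e^{-c2^{2k}(t-s)}\,2^k\|\dot\Delta_k f(u(s))\|_{L^2}\,ds .
\]
Since $f(u)=O(u^2)$ (recall $f(0)=f'(0)=0$), Lemma \ref{Embedding_Lemma} gives
\[
2^{-kd/2}\|\dot\Delta_k f(u)\|_{L^2}\lesssim\|f(u)\|_{L^1}\lesssim \|u\|_{L^2}^2\lesssim (1+s)^{-d/2},
\]
where the last step uses \eqref{Estimate_Decay_Main}. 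Hence
\[
2^{k\sigma}\|\dot\Delta_k W\|_{L^2}\lesssim \int_0^t 2^{k(\sigma+d/2+1)}e^{-c2^{2k}(t-s)}(1+s)^{-d/2}\,ds .
\]
We split this integral at $s=t/2$.

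On $[t/2,t]$, we sum or take the sup over $k\le 0$ using
\[
\sup_k 2^{k(\sigma+d/2+1)}e^{-c2^{2k}(t-s)}\lesssim (t-s)^{-a_\sigma},
\]
which is integrable only when $a_\sigma<1$. For larger $\sigma$ we instead keep one factor $2^{2k}$, integrate the heat factor to $O(1)$, and use the source decay $(1+t)^{-d/2}$. With $d\ge 3$ and $a_\sigma\le d/2$ (true since $\sigma\le d/2-1$), $(1+t)^{-d/2}\lesssim (1+t)^{-a_\sigma}$ is enough.

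On $[0,t/2]$, the heat factor gives $(1+t)^{-a_\sigma}$ uniformly, and $\int_0^\infty (1+s)^{-d/2}\,ds<\infty$ because $d\ge3$. This is exactly where $d\ge 3$ enters: for $d=2$ the logarithm would spoil the rate.

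For $r<\infty$, the $\ell^r$ summation over $k$ is carried out using $\sum_k (2^k\sqrt{t})^{2a}e^{-c4^kt}\lesssim 1$, as in the proof of Theorem \ref{Theorem_Main_Linear}.

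\textbf{High frequencies.} The part $\sum_{k>0}$ is dominated by $\|u\|_{\dot B^{d/2+1,H}_{2,1}}$, using $\sigma\le d/2-1<d/2+1$ and \eqref{Inequality_h_low}. Estimate \eqref{Decay_high_Freq} with $\sigma_1=d/2$ gives the rate $(1+c_0t)^{-2\alpha_1}=(1+c_0t)^{-(d-1)/2}$. Since
\[
a_\sigma\le \frac{d}{2}\le \frac{d-1}{2}+\frac12,
\]
we need a little more than \eqref{Decay_high_Freq} here. To get it, we redo the argument behind \eqref{H_norm_Decay_1}: the source $\|u\|_{\dot B^{d/2+1}_{2,1}}^2$ is bounded by $\|u\|_{\mathcal X}\,\|u\|_{\dot B^{d/2+1}_{2,1}}$, whose low part decays like $(1+t)^{-(d/2+\sigma_1)/2-1/2}$ by \eqref{Decay_Sigma}-type interpolation. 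Combined with the exponential kernel $e^{-c(t-s)}$, this yields decay at least $(1+t)^{-a_\sigma}$.

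\textbf{Main obstacle.} The delicate step is the low-frequency Duhamel bound in the range $a_\sigma\ge 1$, together with matching the high-frequency rate to $a_\sigma$ when $\sigma$ is close to $d/2-1$. If the direct $L^1$ bound falls short there, the first fallback is a bootstrap: define $M(t)=\sup_{s\le t}(1+s)^{a_\sigma}\|u(s)\|_{\dot B^{\sigma}_{2,r}}$ and estimate the nonlinearity by products of $u$ in $\dot B^{-d/2}_{2,\infty}$-type norms with decaying $L^\infty$ norms. This closes because $\|u\|_X$ is small.
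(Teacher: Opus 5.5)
Your proof is essentially the paper's: Duhamel's formula, with the low-frequency source controlled by $\|f(u)\|_{\dot B^{-d/2}_{2,\infty}}\lesssim\|f(u)\|_{L^1}\lesssim\|u\|_{L^2}^2\lesssim(1+s)^{-d/2}$ and $d\ge3$, and the high frequencies controlled by the decay from Theorem~\ref{Theorem_Decay_2}; your split at $s=t/2$ is just the proof of the convolution inequality the paper quotes, and it already covers $a_\sigma\ge1$, so no bootstrap is needed. One correction: for $\sigma_1=d/2$ one has $2\alpha_1=d-1$, not $(d-1)/2$, so \eqref{Decay_high_Freq} alone gives $\|u(t)\|_{\dot B^{\sigma,H}_{2,r}}\le\|u(t)\|_{\dot B^{d/2+1,H}_{2,1}}\lesssim(1+c_0t)^{-(d-1)}\lesssim(1+t)^{-a_\sigma}$ (since $a_\sigma\le d/2\le d-1$), and your extra patch, which would invoke \eqref{Decay_Sigma} outside its stated range $\sigma\le d/2-1$, is unnecessary.
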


\begin{proof}
By Duhamel's formula, we have
\begin{equation}
\label{Duhamel_Nonlinear_Improved}
u(t)
=
e^{t\Delta P}u_0
-
\int_0^t e^{(t-s)\Delta P}\nabla\cdot f(u(s))\,ds.
\end{equation}
Using the linear estimate \eqref{Main_Decay_estimate_improved}, we obtain
\begin{equation}
\label{Duhamel_Estimate_Nonlinear_Improved}
\begin{aligned}
\|u(t)\|_{\dot B^\sigma_{2,r}}
\lesssim{}&
(1+t)^{-a_\sigma}
\left(
\|u_0\|_{\dot B^{-d/2-1}_{2,\infty}}
+
\|u_0\|_{\dot B^\sigma_{2,r}}
\right)
\\
&+
\int_0^t
(1+t-s)^{-a_\sigma}
\|\nabla\cdot f(u(s))\|_{\dot B^{-d/2-1}_{2,\infty}}
\,ds
\\
&+
\int_0^t
e^{-\lambda(t-s)}
\|\nabla\cdot f(u(s))\|_{\dot B^\sigma_{2,r}}
\,ds .
\end{aligned}
\end{equation}
We have
\begin{equation}
\label{Duhamel_Estimate_Nonlinear_Improved_2}
\begin{aligned}
\|u(t)\|_{\dot B^\sigma_{2,r}}
\lesssim{}&
(1+t)^{-a_\sigma}
\left(
\|u_0\|_{\dot B^{-d/2-1}_{2,\infty}}
+
\|u_0\|_{\dot B^\sigma_{2,r}}
\right)
\\
&+
\int_0^t
(1+t-s)^{-a_\sigma}
\|f(u(s))\|_{\dot B^{-d/2}_{2,\infty}}
\,ds
\\
&+
\int_0^t
e^{-\lambda(t-s)}
\|f(u(s))\|_{\dot B^{\sigma+1,H}_{2,r}}
\,ds .
\end{aligned}
\end{equation}

We first estimate the low-frequency contribution. Since
\[
L^1(\mathbb R^d)\hookrightarrow \dot B^{-d/2}_{2,\infty}(\mathbb R^d),
\]
and since \(f(0)=f'(0)=0\), we have
\begin{equation}
\label{Low_Frequency_Source_Estimate}
\|f(u(t))\|_{\dot B^{-d/2}_{2,\infty}}
\lesssim
\|f(u(t))\|_{L^1}
\lesssim
\|u(t)\|_{L^2}^2.
\end{equation}
Using \eqref{Estimate_Decay_Main}, we infer that
\begin{equation}
\label{Low_Frequency_Source_Decay}
\|f(u(t))\|_{\dot B^{-d/2}_{2,\infty}}
\lesssim
(1+t)^{-d/2}.
\end{equation}
Hence
\begin{equation}
\label{Low_Frequency_Duhamel_Term}
\begin{aligned}
&\int_0^t
(1+t-s)^{-a_\sigma}
\|f(u(s))\|_{\dot B^{-d/2}_{2,\infty}}
\,ds\lesssim
\int_0^t
(1+t-s)^{-a_\sigma}
(1+s)^{-d/2}
\,ds .
\end{aligned}
\end{equation}
Because \(d\geq 3\), we have \(d/2>1\). Moreover, since
\[
\sigma\leq \frac d2-1,
\]
we have
\[
a_\sigma=\frac{\sigma+d/2+1}{2}\leq \frac d2.
\]
Therefore, by the standard convolution estimate
\[
\int_0^t
(1+t-s)^{-a}
(1+s)^{-b}
\,ds
\lesssim
(1+t)^{-\min\{a,b\}},
\qquad
a,b>0,\quad \max\{a,b\}>1,
\]
we obtain
\begin{equation}
\label{Low_Frequency_Duhamel_Term_Final}
\int_0^t
(1+t-s)^{-a_\sigma}
\|f(u(s))\|_{\dot B^{-d/2}_{2,\infty}}
\,ds
\lesssim
(1+t)^{-a_\sigma}.
\end{equation}
We now estimate the high-frequency contribution. We use the product estimates established in the proof of Theorem~\ref{Main_Theorem_Nonl}. Since \(f(0)=f'(0)=0\), the nonlinear term is at least quadratic, and we have
\begin{equation}
\label{HF_Product_Estimate}
\|f(u(t))\|_{\dot B^{\sigma+1,H}_{2,r}}
\lesssim
\|u(t)\|_{\mathcal{X}}
\|u(t)\|_{\dot B^{d/2+1}_{2,1}}.
\end{equation}
Since
\[
\|u(t)\|_{\dot B^{d/2+1}_{2,1}}
\lesssim
\|u(t)\|_{\mathcal{X}},
\]
it follows that
\begin{equation}
\label{HF_Product_Estimate_2}
\|f(u(t))\|_{\dot B^{\sigma+1,H}_{2,r}}
\lesssim
\|u(t)\|_{\mathcal{X}}^2.
\end{equation}

Taking \(\sigma_1=d/2\) in Theorem~\ref{Theorem_Decay_2}, we obtain
\[
\alpha_1
=
\frac12\left(\frac d2+\frac d2-1\right)
=
\frac{d-1}{2}.
\]
Therefore, \eqref{Decay_Rate_Besov} yields
\[
\|u(t)\|_{\mathcal{X}}
\lesssim
(1+c_0t)^{-\frac{d-1}{2}}.
\]
Combining this estimate with \eqref{HF_Product_Estimate_2}, we get
\begin{equation}
\label{HF_Source_Decay}
\|f(u(t))\|_{\dot B^{\sigma+1,H}_{2,r}}
\lesssim
(1+c_0t)^{-(d-1)}.
\end{equation}
Since \(d\geq 3\) and \(\sigma\leq d/2-1\), we have
\[
a_\sigma\leq \frac d2\leq d-1.
\]
Hence,
\begin{equation}
\label{HF_Source_Decay_2}
\|f(u(t))\|_{\dot B^{\sigma+1}_{2,r}}
\lesssim
(1+c_0t)^{-a_\sigma}.
\end{equation}
Consequently,
\begin{equation}
\label{HF_Duhamel_Term}
\begin{aligned}
\int_0^t
e^{-\lambda(t-s)}
\|f(u(s))\|_{\dot B^{\sigma+1,H}_{2,r}}
\,ds
&\lesssim
\int_0^t
e^{-\lambda(t-s)}
(1+c_0s)^{-a_\sigma}
\,ds
\\
&\lesssim
(1+c_0t)^{-a_\sigma}.
\end{aligned}
\end{equation}
Combining \eqref{Duhamel_Estimate_Nonlinear_Improved_2},
\eqref{Low_Frequency_Duhamel_Term_Final}, and
\eqref{HF_Duhamel_Term}, we finally obtain
\[
\|u(t)\|_{\dot B^\sigma_{2,r}}
\lesssim
(1+c_0t)^{-a_\sigma}.
\]
This concludes the proof of the proposition.
\end{proof}

\section{A local existence theory}\label{Section_Local_Existence}
Here we follow a standard local existence scheme, as presented for instance in \cite[Chapter 4]{Bahouri_2011_1}.
\begin{theorem}
\label{them1:local}
Assume that $u_0$  is in  $ \dot{B}^{\frac{d}{2}-1}_{2,1}\cap \dot{B}^{\frac{d}{2}+1}_{2,1}.$ There exists a time $T>0$ such that \eqref{Main_problem} supplemented with the initial datum $u_0$ admits a unique solution $u$ satisfying
$$u \in \mathcal{C}([0,T); \dot B^{\frac{d}{2}-1}_{2,1} \cap \dot B^{\frac{d}{2}+1}_{2,1})\cap L^1([0,T); \dot B^{\frac{d}{2}+1}_{2,1}).
$$

\end{theorem}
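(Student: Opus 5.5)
We will follow the classical Friedrichs-type scheme of \cite[Chapter 4]{Bahouri_2011_1}. The plan has four parts: approximate, bound uniformly, show convergence, and recover continuity and uniqueness.

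\textbf{Approximation.} Let $\mathbb{E}_n$ be the Fourier cut-off onto frequencies $n^{-1}\le|\xi|\le n$. Consider
\begin{equation*}
\partial_t u_n-\Delta P u_n+\mathbb{E}_n\nabla\cdot f(u_n)=0,\qquad u_n(0)=\mathbb{E}_n u_0 .
\end{equation*}
This is an ODE in the Banach space $L^2_n$ of $L^2$ functions with spectrum in that annulus. All Besov norms are equivalent there, and the right-hand side is locally Lipschitz because $f$ is smooth. The Cauchy--Lipschitz theorem therefore gives a unique maximal solution $u_n\in C^1([0,T_n);L^2_n)$. Moreover $\mathbb{E}_n u_n=u_n$, so $u_n$ keeps its spectrum in the annulus.

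\textbf{Uniform bounds.} This is the central step. We repeat the a priori estimates from the proof of Theorem \ref{Main_Theorem_Nonl}, now on a finite time interval and without any smallness assumption. For the low frequencies we use \eqref{Low_Fre_Est_1}. For the high frequencies we use the commutator form \eqref{Main_N_problem_k_comm}. The operator $\mathbb{E}_n$ is an $L^2$-orthogonal Fourier multiplier that commutes with $\dot\Delta_k$, so the same symmetrization works: the transport term $f'(u_n)\nabla\dot\Delta_k u_n$ is treated as before, and the extra commutator with $\mathbb{E}_n$ is harmless.

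With $X_n(t)=\|u_n\|_{L^\infty_t(\dot B^{d/2-1}_{2,1}\cap\dot B^{d/2+1}_{2,1})}$, we expect an estimate of the form
\begin{equation*}
X_n(t)\le C\|u_0\|_{\dot B^{d/2-1}_{2,1}\cap\dot B^{d/2+1}_{2,1}}+C\,C(X_n)\int_0^t\|u_n\|_{\dot B^{d/2+1}_{2,1}}\bigl(1+X_n\bigr)\,ds ,
\end{equation*}
where $C(X_n)$ comes from the composition Lemma \ref{Composition_classical} and $\|u_n\|_{L^\infty}\lesssim X_n$. Since $\int_0^t\|u_n\|_{\dot B^{d/2+1}_{2,1}}\,ds\le tX_n(t)$, a continuity argument gives a time $T>0$, independent of $n$, such that $X_n\le 2C\|u_0\|$ on $[0,T]$. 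This also shows $T_n>T$.

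The main obstacle is ensuring that $T$ depends only on the norm of $u_0$. This fails if one tries to use smallness of the $L^1_t$ norm of the solution itself. The step above is therefore done crudely, bounding that integral by $t\,X_n$ on short times, which is what makes $T$ uniform.

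\textbf{Convergence.} We show that $(u_n)$ is Cauchy in $C([0,T];\dot B^{d/2-1}_{2,1})$, or in $C([0,T];\dot B^{d/2}_{2,1})$ in the high frequencies. The difference $\delta u=u_n-u_m$ satisfies a linear transport-type equation with source
\begin{equation*}
\nabla\cdot\bigl(f(u_n)-f(u_m)\bigr)=\nabla\cdot\Bigl(\delta u\int_0^1 f'\bigl(u_m+\tau\,\delta u\bigr)\,d\tau\Bigr)
\end{equation*}
plus the term $(\mathbb{E}_n-\mathbb{E}_m)$ applied to the nonlinearity. Estimating in this lower-regularity norm loses one derivative, which is affordable thanks to the uniform $\dot B^{d/2+1}_{2,1}$ bounds, and the terms involving $\mathbb{E}_n-\mathbb{E}_m$ tend to zero. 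The limit $u$ solves \eqref{Main_problem}. Weak-$*$ compactness together with the Fatou property of Besov spaces gives $u\in L^\infty(0,T;\dot B^{d/2-1}_{2,1}\cap\dot B^{d/2+1}_{2,1})$ with $u\in L^1(0,T;\dot B^{d/2+1}_{2,1})$.

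\textbf{Time continuity and uniqueness.} Time continuity in the top norm is the usual delicate point. We treat the low frequencies directly from the equation. For the high frequencies we use the dyadic energy identity, which gives uniform continuity of $\sum_k 2^{k(d/2+1)}\|\dot\Delta_k u\|_{L^2}$, together with the summable tail in $\ell^1$ (the standard argument of \cite[Chapter 4]{Bahouri_2011_1}). Uniqueness follows from the same difference estimate as in the convergence step, applied to two solutions, combined with Gronwall's lemma.
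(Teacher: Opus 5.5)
Your proposal is correct. It follows the same Friedrichs scheme as the paper: spectral truncation, uniform bounds from the Section 4 estimates, passage to the limit, and time continuity and uniqueness from a difference estimate one derivative lower. Where you differ is in how the uniform bounds are closed.

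The paper chooses $T$ so that the dissipative norm $L^1_T(\dot B^{\frac d2+1}_{2,1})$ of the free solution $e^{t\Delta P}u_0$ is small. It then closes $\mathcal{E}+\mathcal{D}\lesssim\|u_0\|_X+\mathcal{E}\mathcal{D}$ by a bootstrap on the smallness of $\mathcal{D}$. This is the standard device in critical frameworks, where the top norm is only integrable in time.

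You instead use the fact that the hybrid norm $\|\cdot\|_{\mathcal X}$ is equivalent to the full $\dot B^{\frac d2-1}_{2,1}\cap\dot B^{\frac d2+1}_{2,1}$ norm. So $\|u_n\|_{\dot B^{\frac d2+1}_{2,1}}$ and $\|u_n\|_{L^\infty}$ are already bounded pointwise in time by $X_n$. Then $\int_0^t\|u_n\|_{\dot B^{\frac d2+1}_{2,1}}\,ds\le tX_n(t)$, and a plain continuity argument closes. In effect you treat \eqref{Main_problem} as a quasilinear transport equation perturbed by the bounded dissipative operator $\Delta P$.

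Each route buys something different:
\begin{itemize}
\item Your route avoids comparing with the free flow and the smallness bootstrap, and it makes $T$ an explicit function of $\|u_0\|_{\dot B^{\frac d2-1}_{2,1}\cap\dot B^{\frac d2+1}_{2,1}}$.
\item The paper's device is the one that carries over to settings where only $L^1_t$ control of the top norm is available.
\item Your Cauchy-sequence argument in $\dot B^{\frac d2-1,L}_{2,1}\cap\dot B^{\frac d2,H}_{2,1}$ replaces the paper's compactness step, and the same estimate also gives uniqueness.
\item Your annulus cut-off is a good choice: it makes all homogeneous norms equivalent on $L^2_n$, so the bound on $X_n$ rules out blow-up of the ODE before $T$.
\end{itemize}

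Two small corrections. First, in the uniform bounds no commutator with $\mathbb{E}_n$ arises at all, because $\mathbb{E}_n$ is self-adjoint and $\mathbb{E}_n\dot\Delta_k u_n=\dot\Delta_k u_n$. Such terms appear only in the difference estimate, where $\delta u\notin L^2_n$ in general, and there you must show that contributions like $(\mathrm{Id}-\mathbb{E}_n)u_m$ vanish. Second, in that difference estimate the high-frequency part of $\nabla\cdot(\delta u\,G)$ must be split into $G\cdot\nabla\delta u$, handled with the commutator estimate, and $\delta u\,\nabla\cdot G$. Treated purely as a source term, it would require $\delta u\in\dot B^{\frac d2+1}_{2,1}$ and would not close.
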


\begin{proof}
  With the a priori estimates obtained in Section \ref{Section_Nonl}, the local well-posedness can be established by a standard Friedrichs approximation scheme combined with a linearized Picard iteration. 
The result follows from a standard Friedrichs approximation procedure; see, for
instance, \cite[Chapter~4]{Bahouri_2011_1}. More precisely, applying a spectral truncation, we obtain a sequence of finite-dimensional approximate
problems, which admit smooth solutions on a maximal time interval. The estimates
established in Section 4 apply uniformly to those approximate solutions. By choosing
$T>0$ sufficiently small, so that the dissipative norm of the solution to the
corresponding linear problem is small on $[0,T]$, a standard bootstrap argument yields
uniform bounds in
\[
\widetilde L^\infty_T
 \bigl(\dot B^{d/2-1,L}_{2,1}
       \cap \dot B^{d/2+1,H}_{2,1}\bigr)
\cap
L^1_T\bigl(\dot B^{d/2+1}_{2,1}\bigr).
\]
where the notation $\widetilde L^\infty_T$ corresponds to a Chemin-Lerner space, see \cite[Sec. 2.6.3]{Bahouri_2011_1}.
Compactness arguments then allow us to pass to the limit and obtain a solution of
\eqref{Main_problem}. Time continuity and uniqueness follow from the analogous
estimates applied to the difference of two solutions in a lower regularity norm. As all these arguments are
classical, we omit the remaining details.


\end{proof}

\bigbreak
\bigbreak

 \subsection*{Acknowledgments}

T. Crin-Barat is supported by the project ANR-24-CE40-3260 – Hyperbolic Equations, Approximations $\&$ Dynamics (HEAD) and the project ANR-25-CE40-5565 (Cookie).

\subsection*{Data availability statement}

Data sharing is not applicable to this article, as no datasets were generated or analyzed during the current study.

\subsection*{Conflict of interest statement}

The authors declare that they have no conflict of interest.

\appendices 
\section{Appendix}\label{Appendix_A}
In this section, we collect a few technical lemmas that have been used in the proof.
\begin{lemma}[\cite{CRINBARAT20221}]\label{Lemma_Diff_Ineq} Let $X:[0,T]\rightarrow \R_+$ be a continuous function such that $X^2$ is differentiable. Assume that there exists a constant $B\geq 0$ and a measurable function $A:[0,T]\rightarrow\R_+$ such that  
\begin{equation}
\frac{1}{2}\frac{\textup{d}}{\dt} X^2+BX^2\leq AX,\qquad a.e. \quad \text{on}\quad [0,T].
\end{equation}
Then, for all $t\in[0,T]$, we have 
\begin{equation}
X(t)+B\int_0^t X(s)\ds\leq X(0)+\int_0^t A(s)\ds. 
\end{equation}
\end{lemma}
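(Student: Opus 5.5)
The plan is to reduce everything to the scalar quantity $Y:=X^2$, which is differentiable by assumption, and to regularize near the zero set of $X$. This is the standard trick: dividing the inequality by $X$ is illegitimate where $X$ vanishes. Fix $\varepsilon>0$ and set
\[
X_\varepsilon:=\sqrt{X^2+\varepsilon^2}.
\]
This function is positive. Since $X^2$ is differentiable, $X_\varepsilon$ is differentiable wherever $X^2$ is, with
\[
\frac{\textup{d}}{\dt}X_\varepsilon=\frac{1}{2X_\varepsilon}\frac{\textup{d}}{\dt}X^2 .
\]

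We first divide the hypothesis by $X_\varepsilon>0$. This gives, a.e. on $[0,T]$,
\[
\frac{\textup{d}}{\dt}X_\varepsilon+B\frac{X^2}{X_\varepsilon}\leq A\frac{X}{X_\varepsilon}\leq A,
\]
where the last step uses $0\le X\le X_\varepsilon$ and $A\ge0$. Next we rewrite the damping term as
\[
B\frac{X^2}{X_\varepsilon}=BX_\varepsilon-B\frac{\varepsilon^2}{X_\varepsilon}\geq BX_\varepsilon-B\varepsilon ,
\]
which holds because $X_\varepsilon\ge\varepsilon$. Combining the two displays yields
\[
\frac{\textup{d}}{\dt}X_\varepsilon+BX_\varepsilon\le A+B\varepsilon \quad\text{a.e.}
\]

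We then integrate from $0$ to $t$:
\[
X_\varepsilon(t)+B\int_0^tX_\varepsilon\ds\le X_\varepsilon(0)+\int_0^tA\ds+B\varepsilon t .
\]
Finally we let $\varepsilon\to0$. Here $X_\varepsilon\to X$ uniformly, since $0\le X_\varepsilon-X\le\varepsilon$, so the desired inequality follows.

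The delicate point is the integration step. It requires that $X_\varepsilon$ be absolutely continuous, or at least that the fundamental theorem of calculus apply, since we only know an a.e. differential inequality. The statement says $X^2$ is differentiable, which I read as differentiable at every point. An everywhere-differentiable function whose derivative is bounded above by an integrable function satisfies $g(t)-g(0)\le\int_0^t g'$. This is a classical real-analysis fact: everywhere differentiability rules out Cantor-type jumps. Applying it to $g=X_\varepsilon$ will close the argument. If $A$ is not integrable, the right-hand side is $+\infty$ and there is nothing to prove. I expect this regularity justification to be the only genuine obstacle; the rest is algebra.
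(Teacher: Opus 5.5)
Your argument is correct and is essentially the standard proof from the cited reference \cite{CRINBARAT20221}; the paper itself only quotes the lemma without proof. That proof likewise regularizes with $X_\varepsilon=\sqrt{X^2+\varepsilon^2}$, divides by $X_\varepsilon$, uses $X_\varepsilon\ge\varepsilon$ to reach $\ddt X_\varepsilon+BX_\varepsilon\le A+B\varepsilon$, integrates and lets $\varepsilon\to0$, and your extra care about the integration step is justified. The real-analysis fact you invoke is true: it follows from a Vitali--Carath\'eodory/Goldowsky--Tonelli argument, and in the paper's applications $X^2=\|\dot\Delta_k u\|_{L^2}^2$ is even $C^1$ in time, so that step is immediate there.
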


We also recall the following classical commutator estimate (see \cite[Chapter 2]{Bahouri_2011_1}).
\begin{lemma}\label{Commutator_estimate}
For $s\in (-\frac{d}{2}, \frac{d}{2}+1]$, for $a,u\in\mathcal{S}'(\R^d)$, we have 
\begin{equation}
\|[a,\dot{\Delta}_{k}] \nabla u \|_{L^2}\leq Cc_k2^{-ks}\|\nabla a\|_{\dot{B}%
_{2,1}^{d/2}}\|u\|_{\dot{B}%
_{2,1}^{s}}
\end{equation}
with $\sum_{k\in \Z}c_k=1$. 
\end{lemma}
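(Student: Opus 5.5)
The plan is to prove the commutator estimate by Bony's paraproduct decomposition. We split
\[
[a,\dot\Delta_k]\nabla u = [T_a,\dot\Delta_k]\nabla u + T'_{\dot\Delta_k\nabla u}a - \dot\Delta_k T_{\nabla u}a - \dot\Delta_k R(a,\nabla u) + \dot\Delta_k\bigl(T_{\nabla u}a\bigr)\text{-type terms},
\]
where we use the standard writing $au = T_a u + T_u a + R(a,u)$ and $T'_v a := T_v a + R(v,a)$. Each piece is then bounded by $c_k 2^{-ks}\|\nabla a\|_{\dot B^{d/2}_{2,1}}\|u\|_{\dot B^s_{2,1}}$, with $(c_k)\in\ell^1$ normalized to sum one. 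This is the classical argument of \cite[Lemma 2.100]{Bahouri_2011_1}, and we adapt it to $p=2$ and the range $s\in(-\frac d2,\frac d2+1]$.

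\textbf{Step 1: the paraproduct commutator.} We have $[T_a,\dot\Delta_k]\nabla u=\sum_{|j-k|\le 4}[\dot S_{j-1}a,\dot\Delta_k]\nabla\dot\Delta_j u$. Writing $\dot\Delta_k$ as convolution with $2^{kd}h(2^k\cdot)$ and using the first-order Taylor formula gives
\[
\|[\dot S_{j-1}a,\dot\Delta_k]\nabla\dot\Delta_j u\|_{L^2}\lesssim 2^{-k}\|\nabla \dot S_{j-1}a\|_{L^\infty}\|\nabla\dot\Delta_j u\|_{L^2}.
\]
By Bernstein (Lemma \ref{Lemma_Tao}) this is bounded by $\|\nabla a\|_{L^\infty}\|\dot\Delta_j u\|_{L^2}$. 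The embedding $\dot B^{d/2}_{2,1}\hookrightarrow L^\infty$ then yields the bound $c_k2^{-ks}\|\nabla a\|_{\dot B^{d/2}_{2,1}}\|u\|_{\dot B^s_{2,1}}$ for every real $s$.

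\textbf{Step 2: the term $T'_{\dot\Delta_k\nabla u}a$ and $\dot\Delta_k T_{\nabla u}a$.} These are low–high interactions in which $a$ carries the high frequency. For $\dot\Delta_k T_{\nabla u}a$ we sum over $|j-k|\le 4$ and bound
\[
\|\dot S_{j-1}\nabla u\|_{L^\infty}\,\|\dot\Delta_j a\|_{L^2}\lesssim \sum_{j'\le j-2}2^{j'(d/2+1)}\|\dot\Delta_{j'}u\|_{L^2}\; 2^{-j}\,\|\dot\Delta_j\nabla a\|_{L^2}.
\]
The factor $2^{j'(d/2+1-s)}2^{j's}\|\dot\Delta_{j'}u\|_{L^2}$ summed over $j'\le j$ gives $2^{j(d/2+1-s)}\|u\|_{\dot B^s_{2,1}}$, provided $s<\frac d2+1$; at the endpoint $s=\frac d2+1$ we instead use the $\ell^1$ sum directly. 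Combined with $2^{-j}2^{-jd/2}c_j\|\nabla a\|_{\dot B^{d/2}_{2,1}}$, this gives the required $c_k2^{-ks}$. The term $T_{\dot\Delta_k\nabla u}a$ is handled in the same way, with only $\dot\Delta_k u$ involved.

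\textbf{Step 3: the remainders.} This is the main obstacle, because it is where the lower bound $s>-\frac d2$ enters. For $\dot\Delta_k R(a,\nabla u)$, and similarly for $R(\dot\Delta_k\nabla u,a)$, we use the $L^1\to L^2$ Bernstein inequality $\|\dot\Delta_k g\|_{L^2}\lesssim 2^{kd/2}\|g\|_{L^1}$. This gives
\[
2^{kd/2}\sum_{j\ge k-3}\|\dot\Delta_j a\|_{L^2}\|\widetilde{\dot\Delta}_j\nabla u\|_{L^2}\lesssim 2^{kd/2}\sum_{j\ge k-3}2^{-j(d/2+s)}\,c_j\,\|\nabla a\|_{\dot B^{d/2}_{2,1}}\|u\|_{\dot B^s_{2,1}}.
\]
The series over $j\ge k$ converges to $2^{-k(d/2+s)}$ exactly when $d/2+s>0$, and this produces $c_k2^{-ks}$ with $(c_k)\in\ell^1$ by Young's inequality for series. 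Some care is needed in Step 2 to place the derivative on $a$ rather than on $u$ in each low–high term, so that only $\nabla a$ appears. After renormalizing the $\ell^1$ sequence so that $\sum_k c_k=1$, collecting the pieces finishes the proof.
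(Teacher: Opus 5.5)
Your Bony-decomposition argument is correct and is essentially the classical proof from \cite[Chapter~2]{Bahouri_2011_1}, which the paper cites without reproducing; the constraints enter where you place them, namely $s>-\frac d2$ only in $\dot\Delta_k R(a,\nabla u)$ through the $L^1\to L^2$ Bernstein bound, and $s\le\frac d2+1$ only in $\dot\Delta_k T_{\nabla u}a$. The blemishes are cosmetic: the identity $[a,\dot\Delta_k]\nabla u=[T_a,\dot\Delta_k]\nabla u+T'_{\dot\Delta_k\nabla u}a-\dot\Delta_k T_{\nabla u}a-\dot\Delta_k R(a,\nabla u)$ is exact, so the trailing ``$\dot\Delta_k(T_{\nabla u}a)$-type terms'' should be dropped, and $T_{\dot\Delta_k\nabla u}a$ runs over all $j\gtrsim k$ rather than $|j-k|\le 4$, but the bound $\sum_{j\gtrsim k}\|\dot\Delta_j a\|_{L^2}\lesssim 2^{-k(\frac d2+1)}\|\nabla a\|_{\dot B^{d/2}_{2,1}}$ handles that term at once for every $s$.
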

We recall the following composition estimate (see \cite{Danchin:2017aa}).
\begin{lemma}\label{Composition_classical}
Let $F : \mathbb{R} \to \mathbb{R}$ be smooth with $F(0)=0$. For all $1 \le p, r \le \infty$ and $\sigma>0$ we have 
$
F(f) \in \dot{B}^{\sigma}_{p,r} \cap L^\infty \quad \text{for } f \in \dot{B}^{\sigma}_{p,r} \cap L^\infty,
$
and
\begin{equation}
\|F(f)\|_{\dot{B}^{\sigma}_{p,r}} \leq C(\|f\|_{L^\infty})\|f\|_{\dot{B}^{\sigma}_{p,r}}
\end{equation}
with $C$ depending only on $\|f\|_{L^\infty}$, $F'$ (and higher derivatives), $\sigma$, $p$, and $d$.
\end{lemma}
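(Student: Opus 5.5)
We need to prove the composition estimate, Lemma \ref{Composition_classical}, for $F$ smooth with $F(0)=0$ and $\sigma>0$. The plan is the classical paralinearization/Meyer argument based on a telescoping decomposition along the low-frequency cut-offs $\dot S_j f$.

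\textbf{Step 1: telescoping decomposition.} First assume $f$ is smooth with $\dot S_j f\to 0$ in $L^\infty$ as $j\to-\infty$ (the general case follows by density and Fatou). Write
\begin{equation}
F(f)=\sum_{j\in\Z}\bigl(F(\dot S_{j+1}f)-F(\dot S_j f)\bigr)=\sum_{j\in\Z} m_j\,\dot\Delta_j f,\qquad m_j:=\int_0^1 F'(\dot S_j f+\tau\dot\Delta_j f)\,d\tau.
\end{equation}
This uses $F(0)=0$ together with $\dot S_jf\to f$ as $j\to+\infty$.

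\textbf{Step 2: derivative bounds on $m_j$.} The multipliers satisfy, for every multi-index $\beta$,
\begin{equation}
\|\partial^\beta m_j\|_{L^\infty}\le C_\beta(\|f\|_{L^\infty})\,2^{j|\beta|}.
\end{equation}
To see this, expand $\partial^\beta$ with the Faà di Bruno formula. Each factor involves derivatives of $\dot S_jf$ or $\dot\Delta_jf$, and Bernstein's inequality (Lemma \ref{Lemma_Tao}) gives $\|\partial^\gamma \dot S_j f\|_{L^\infty}\lesssim 2^{j|\gamma|}\|f\|_{L^\infty}$. The derivatives of $F'$ are evaluated on the ball of radius $C\|f\|_{L^\infty}$.

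\textbf{Step 3: estimate each dyadic block of $F(f)$.} Fix $k$ and split the sum into $j\ge k$ and $j<k$.
\begin{itemize}
\item For $j\ge k$, bound crudely:
\begin{equation}
\|\dot\Delta_k(m_j\dot\Delta_jf)\|_{L^p}\le C\|\dot\Delta_jf\|_{L^p}.
\end{equation}
\item For $j<k$, pick an integer $N>\sigma$ and use $\|\dot\Delta_k g\|_{L^p}\lesssim 2^{-kN}\|\nabla^N g\|_{L^p}$ together with Leibniz and Step 2. This gives
\begin{equation}
\|\dot\Delta_k(m_j\dot\Delta_jf)\|_{L^p}\lesssim 2^{(j-k)N}\|\dot\Delta_jf\|_{L^p}.
\end{equation}
\end{itemize}
Multiplying by $2^{k\sigma}$, we obtain
\begin{equation}
2^{k\sigma}\|\dot\Delta_kF(f)\|_{L^p}\lesssim \sum_{j\ge k}2^{(k-j)\sigma}\,2^{j\sigma}\|\dot\Delta_jf\|_{L^p}+\sum_{j<k}2^{(j-k)(N-\sigma)}\,2^{j\sigma}\|\dot\Delta_jf\|_{L^p}.
\end{equation}
Both kernels are summable in $\ell^1(\Z)$: the first because $\sigma>0$, the second because $N>\sigma$. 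Young's inequality for discrete convolutions in $\ell^r$ then yields the desired bound $\|F(f)\|_{\dot B^\sigma_{p,r}}\le C(\|f\|_{L^\infty})\|f\|_{\dot B^\sigma_{p,r}}$.

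\textbf{Step 4: conclusion.} The $L^\infty$ bound is immediate from $|F(f)|\le \sup_{|y|\le\|f\|_\infty}|F'(y)|\,|f|$. It remains to check that $F(f)\in\mathcal S_h'$. This holds because the telescoping series converges in $L^\infty$ to a function vanishing at low frequencies in the sense required, since $\|\dot S_j F(f)\|_{L^\infty}\to0$.

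\textbf{Main obstacle.} The delicate point is Step 2, namely the uniform derivative bounds on $m_j$ for arbitrary order $N$, since these require all derivatives of $F$ up to order $N+1$. The justification of the telescoping identity for general $f\in\dot B^\sigma_{p,r}\cap L^\infty$ also needs care, since $\dot S_jf\to0$ as $j\to-\infty$ may fail in $L^\infty$. The fix is to work with $F(\dot S_{j}f)-F(\dot S_{-M} f)$, let $M\to\infty$, and use that $F(\dot S_{-M}f)$ is a low-frequency perturbation vanishing modulo polynomials.
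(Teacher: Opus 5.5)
Your Steps~1--3 are the standard Meyer first-linearization argument: telescoping over $\dot S_jf$, the bounds $\|\partial^\beta m_j\|_{L^\infty}\lesssim 2^{j|\beta|}$, the split $j\ge k$ / $j<k$ with $N>\sigma$, and the discrete Young inequality. This is the proof behind the result the paper quotes; the paper gives no argument of its own and simply cites \cite{Danchin:2017aa} (see also \cite[Chapter~2]{Bahouri_2011_1}). These steps correctly give the norm inequality.

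Your lower-end fix can be made precise without any assumption on $\dot S_jf$ as $j\to-\infty$. On one hand, $\|\dot\Delta_kF(\dot S_{-M}f)\|_{L^\infty}\lesssim 2^{-k}\|\nabla F(\dot S_{-M}f)\|_{L^\infty}\lesssim 2^{-k-M}\|f\|_{L^\infty}\to0$ as $M\to\infty$. On the other hand, $F(f)-F(\dot S_{-M}f)=\sum_{j\ge -M}m_j\dot\Delta_jf$ converges in $L^p$, because $\|f-\dot S_Kf\|_{L^p}\lesssim 2^{-K\sigma}\|f\|_{\dot B^\sigma_{p,\infty}}$. Hence $\dot\Delta_kF(f)=\sum_{j\in\Z}\dot\Delta_k(m_j\dot\Delta_jf)$ for every $k$, and the density/Fatou reduction becomes unnecessary.

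The one incorrect step is Step~4.

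\textbf{Convergence in $L^\infty$.} The telescoping series does not converge in $L^\infty$ in general. Its high-frequency tail converges only in $L^p$, and $f$ need not be continuous.

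\textbf{The claim $\|\dot S_jF(f)\|_{L^\infty}\to0$.} This is false for $p=\infty$. Take $f(x)=\cos x_1$ and $F(y)=y^2$. Then $\dot S_jf=0$ for $j\ll0$ and $f\in\dot B^\sigma_{\infty,r}\cap L^\infty$. But $F(f)=\tfrac12+\tfrac12\cos 2x_1$, so $\dot S_jF(f)\to\tfrac12$. Thus for $p=\infty$ the conclusion $F(f)\in\dot B^\sigma_{p,r}$ holds only modulo constants. This is consistent with the paper's convention that $\mathcal S_h'$ is taken modulo polynomials, and under that convention Step~4 is not needed at all.

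\textbf{The correct argument for $p<\infty$.} You have $\|\dot S_j(F(f)-F(\dot S_{j_0}f))\|_{L^\infty}\lesssim 2^{jd/p}\|f-\dot S_{j_0}f\|_{L^p}\to0$ as $j\to-\infty$. Meanwhile $\|\dot S_jF(\dot S_{j_0}f)\|_{L^\infty}\lesssim\|\dot S_{j_0}f\|_{L^\infty}$, which is small for $j_0\ll0$.
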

The following lemma about low-frequency composition estimates has been proved in \cite{Crin-Barat:2025aa}.
\begin{lemma} \label{Lemma_composition_L}
Let $s>0,\, \sigma\in \R$. Then, for any smooth function $F(u)$ satisfying $F(0)=F'(0)=0$, there is a constant $C>0$ depending only on $\|u\|_{L^\infty},\, s,\, \sigma$ and $d$ such that 
\begin{equation}
\Vert F(u)\Vert _{\dot{B}%
_{2,1}^{s,L}}\leq C (\|u\|_{\dot{B}%
_{2,1}^{\frac{d}{2},L}}+\|u\|_{\dot{B}%
_{2,1}^{\frac{d}{2},H}})\|u\|_{\dot{B}%
_{2,1}^{s,L}}
+C(\|u\|_{\dot{B}%
_{2,1}^{\frac{d}{2}-1,L}}+\|u\|_{\dot{B}%
_{2,1}^{\frac{d}{2},H}})\|u\|_{\dot{B}%
_{2,1}^{\sigma,H}}
\end{equation}

\end{lemma}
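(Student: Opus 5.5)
The statement to prove is the last lemma of the appendix, the low-frequency composition estimate (Lemma \ref{Lemma_composition_L}). The plan is to write $F(u)$ as a product, $F(u)=G(u)\,u$ with $G(u)=\int_0^1F'(\tau u)\,d\tau$, so that $G$ is smooth with $G(0)=0$ because $F'(0)=0$. Then we split $u=u^L+u^H$ and write
\[
F(u)=G(u)\,u^L+G(u)\,u^H .
\]
Each piece is handled by a product estimate restricted to low frequencies. The first piece should produce the term $\|u\|_{\dot B^{s,L}_{2,1}}$, and the second the term $\|u\|_{\dot B^{\sigma,H}_{2,1}}$.

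\textbf{First piece.} For $s>0$ we use the classical tame product law
\[
\|ab\|_{\dot B^{s}_{2,1}}\lesssim \|a\|_{L^\infty}\|b\|_{\dot B^{s}_{2,1}}+\|b\|_{L^\infty}\|a\|_{\dot B^{s}_{2,1}}.
\]
A better route is a Bony decomposition $ab=T_ab+T_ba+R(a,b)$ with $a=G(u)$ and $b=u^L$. The paraproduct $T_{G(u)}u^L$ is bounded in $\dot B^{s,L}_{2,1}$ by $\|G(u)\|_{L^\infty}\|u^L\|_{\dot B^{s}_{2,1}}$. Then $\|G(u)\|_{L^\infty}\lesssim\|u\|_{L^\infty}\lesssim \|u\|_{\dot B^{d/2,L}_{2,1}}+\|u\|_{\dot B^{d/2,H}_{2,1}}$, using $G(0)=0$ and the embedding $\dot B^{d/2}_{2,1}\hookrightarrow L^\infty$. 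For the remaining terms $T_{u^L}G(u)+R(G(u),u^L)$, we place $u^L$ in $L^\infty$ or in $\dot B^{d/2}_{2,1}$. We then use Lemma \ref{Composition_classical} to bound $\|G(u)\|_{\dot B^{d/2}_{2,1}}\le C(\|u\|_{L^\infty})\|u\|_{\dot B^{d/2}_{2,1}}$. Since only frequencies $\le 1$ of the output are kept and $s>0$, the remainder is summable. It gives a contribution of the form $(\|u\|_{\dot B^{d/2,L}_{2,1}}+\|u\|_{\dot B^{d/2,H}_{2,1}})\|u\|_{\dot B^{s,L}_{2,1}}$. For this we use that, at low output frequency, $2^{ks}$ with $k\le0$ can be shifted onto the low-frequency factor.

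\textbf{Second piece.} The term $G(u)u^H$ is the delicate one. Its output is localized at low frequencies, while $u^H$ lives at frequencies $\ge1$. We look for a bound with an arbitrary exponent $\sigma$ on $u^H$, which is possible because the weight $2^{k\sigma'}$ on the high-frequency part can be traded freely. Concretely, by Bernstein's inequality (Lemma \ref{Lemma_Tao}),
\[
\|\dot\Delta_k(ab)\|_{L^2}\lesssim 2^{kd/2}\|\dot\Delta_k(ab)\|_{L^1}.
\]
This gives, for $k\le0$ and $s>0$,
\[
\|G(u)u^H\|_{\dot B^{s,L}_{2,1}}\lesssim \sum_{k\le0}2^{k(s+d/2)}\|G(u)\|_{L^2}\|u^H\|_{L^2}.
\]
The sum over $k\le 0$ converges since $s+d/2>0$. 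We then bound $\|G(u)\|_{L^2}\lesssim C\|u\|_{L^2}$, but $L^2$ is not available at critical regularity. We therefore refine: we decompose $G(u)=G(u)^L+G(u)^H$ and use that a product of two high-frequency pieces has low output only through the remainder. The low part $G(u)^L$ is bounded in $\dot B^{d/2-1}_{2,1}$, and $\|u^H\|_{\dot B^{1-d/2}}\lesssim \|u\|_{\dot B^{\sigma,H}_{2,1}}$ up to the standard high-frequency monotonicity (\ref{Inequality_h_low}), provided $\sigma\ge 1-d/2$. The high part of $G(u)$ is placed in $\dot B^{d/2}_{2,1}$.

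\textbf{Main obstacle.} The difficulty is this second piece. We must justify the factor $\|u\|_{\dot B^{d/2-1,L}_{2,1}}+\|u\|_{\dot B^{d/2,H}_{2,1}}$ via $\|G(u)\|_{\dot B^{d/2-1}_{2,1}}$, a composition estimate at negative or low regularity. The first thing to try is to write $G(u)=G'(0)u+\tilde G(u)u$ and iterate the same splitting. Since the lemma allows $\sigma$ to be chosen, we would take it large enough (as in Section \ref{Section_Nonl}, $\sigma=d/2+1$) to absorb all losses.
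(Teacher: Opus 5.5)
The paper gives no proof of this lemma; it is quoted from \cite{Crin-Barat:2025aa}. Your sketch therefore has to stand on its own, and as written it does not close.

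\textbf{The low--high term is left open.} This is the step you flag as the main obstacle: the low-frequency output of $G(u)^L u^H$. Bounding $\|G(u)^L\|_{\dot B^{d/2-1}_{2,1}}$ is a low-frequency composition estimate of the very type being proved. For $d=2$ it is even at regularity $0$, outside Lemma~\ref{Composition_classical}. ``Iterate'' is not an argument. The detour is also unnecessary. Since $u^H=\sum_{j'\ge1}\dot\Delta_{j'}u$ is supported in $\{|\xi|\ge 3/2\}$ and $\dot\Delta_k$, $k\le0$, in $\{|\xi|\le 8/3\}$, a product $\dot\Delta_j G(u)\,\dot\Delta_{j'}u$ with $j'\ge1$ has low-frequency output only in two cases:
either $|j-j'|\le2$, or $j'\in\{1,2\}$ and $j\le j'-3$.

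In the second case only two fixed blocks of $u$ occur, so any weight $2^{j'\sigma}$ is harmless. It then suffices to put $G(u)$ in $L^\infty$, with $\|G(u)\|_{L^\infty}\lesssim\|u\|_{\dot B^{d/2}_{2,1}}\le\|u\|_{\dot B^{d/2-1,L}_{2,1}}+\|u\|_{\dot B^{d/2,H}_{2,1}}$ by \eqref{Inequality_h_low}. This is where that factor comes from. In the first case your $L^1$-Bernstein idea gives
\[
\sum_{k\le0}2^{k(s+\frac d2)}\sum_{\substack{|j-j'|\le2\\ j'\ge1}}\|\dot\Delta_jG(u)\|_{L^2}\|\dot\Delta_{j'}u\|_{L^2}\lesssim\|G(u)\|_{\dot B^{d/2}_{2,1}}\sup_{j'\ge1}2^{-j'\frac d2}\|\dot\Delta_{j'}u\|_{L^2}.
\]
Here $\|G(u)\|_{\dot B^{d/2}_{2,1}}\lesssim\|u\|_{\dot B^{d/2}_{2,1}}$ by Lemma~\ref{Composition_classical}, and the last factor is bounded by $\|u\|_{\dot B^{\sigma,H}_{2,1}}$ exactly when $\sigma\ge-d/2$.

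\textbf{The restriction on $\sigma$ cannot be removed.} Your claim that the high-frequency weight ``can be traded freely'' is wrong: by \eqref{Inequality_h_low}, lowering $\sigma$ makes the right-hand side smaller. Take $F(u)=u^2$ and $u=\varepsilon\phi(x)\cos(2^Nx_1)$ with $\phi$ real and $\widehat\phi$ supported in $B(0,1/2)$. For $N$ large:
\begin{itemize}
\item $u^L=0$ and $\|u\|_{L^\infty}\le\varepsilon\|\phi\|_{L^\infty}$;
\item the low frequencies of $u^2=\frac{\varepsilon^2}{2}\phi^2\bigl(1+\cos(2^{N+1}x_1)\bigr)$ are those of $\frac{\varepsilon^2}{2}\phi^2$, so the left-hand side is a positive constant independent of $N$;
\item the right-hand side is $O\bigl(\varepsilon^2 2^{N(d/2+\sigma)}\bigr)$, which tends to $0$ if $\sigma<-d/2$.
\end{itemize}
So the lemma needs the hypothesis $\sigma\ge-d/2$. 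This is harmless for the paper, which uses $\sigma=d/2+1$. Your closing ``take $\sigma$ large'' misreads the quantifier: it proves the statement for some $\sigma$, not for each $\sigma$.

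\textbf{The first piece works only for $s\le d/2$.} In $T_{u^L}G(u)$ the positive weight $2^{ks}$ cannot be moved onto the lower frequencies of $\dot S_{k-1}u^L$. The usable splitting is $2^{ks}=2^{k(s-d/2)}2^{kd/2}$, with Bernstein on $u^L$ and $\dot B^{d/2}_{2,1}$ on $G(u)$, and it requires $s\le d/2$. For $s>d/2$ you would need low-frequency $\dot B^s$ bounds on $G(u)$, again a statement of the lemma's type. When $s\le\sigma$, as in the paper's use $s=d/2$, the tame law you discarded works directly, because then $\|G(u)\|_{\dot B^s_{2,1}}\lesssim\|u\|_{\dot B^{s,L}_{2,1}}+\|u\|_{\dot B^{\sigma,H}_{2,1}}$.

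A splitting that handles every $s>0$ is $F(u)=F(u^L)+\bigl(F(u)-F(u^L)\bigr)$. Since $u^L$ has only low frequencies, the tame product law and Lemma~\ref{Composition_classical} applied to $F(u^L)=G(u^L)u^L$ give $\|F(u^L)\|_{\dot B^s_{2,1}}\le C\|u^L\|_{L^\infty}\|u^L\|_{\dot B^s_{2,1}}$. The difference $F(u)-F(u^L)=u^H\int_0^1F'(u^L+\tau u^H)\,d\tau$ is treated exactly as above. Its multiplier is bounded in $L^\infty\cap\dot B^{d/2}_{2,1}$ by $C\|u\|_{\dot B^{d/2}_{2,1}}$, since $F'(0)=0$.
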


%

We also recall the following estimates from 
\cite[Chapter 2]{Bahouri_2011_1}.

\begin{lemma}\label{Commutator}
  The following inequalities hold true
  \begin{enumerate}
\item If  $-d/2\leq s\leq d/2+1$, then 
\begin{equation}\label{Commu_s}
\sup_{k\in \Z} 2^{ks}\Vert [w, \dot{\Delta}_{k} ]\nabla v\Vert
_{L^{2}}\leq C \|\nabla w\|_{\dot{B}%
_{2,1}^{\frac{d}{2}}}\|v\|_{\dot{B}%
_{2,\infty}^{s}}
\end{equation}
\item If $-d/2\leq \sigma\leq d/2$, then
\begin{equation}\label{Product_sigma}
\|fg\|_{\dot{B}%
_{2,1}^{-\sigma}}\leq C\|f\|_{\dot{B}%
_{2,1}^{\frac{d}{2}}} \|g\|_{\dot{B}%
_{2,1}^{-\sigma}}.
\end{equation}

\end{enumerate}

\end{lemma}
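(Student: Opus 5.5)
Both inequalities follow from Bony's paraproduct decomposition and Bernstein's inequality (Lemma \ref{Lemma_Tao}). This is the route of \cite[Chapter 2]{Bahouri_2011_1}, and I would reproduce it while tracking the endpoint exponents. For any two tempered distributions I write $ab=T_ab+T_ba+R(a,b)$, where
\[
T_ab=\sum_{k'}\dot S_{k'-1}a\,\dot\Delta_{k'}b,\qquad R(a,b)=\sum_{|k'-k''|\le1}\dot\Delta_{k'}a\,\dot\Delta_{k''}b .
\]

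\emph{Commutator estimate \eqref{Commu_s}.} Fix $j$ and decompose
\begin{equation*}
[w,\dot\Delta_k]\partial_jv=[T_w,\dot\Delta_k]\partial_jv+T_{\dot\Delta_k\partial_jv}'w-\dot\Delta_k\bigl(T_{\partial_jv}w\bigr)-\dot\Delta_kR(w,\partial_jv)+R\bigl(w,\dot\Delta_k\partial_jv\bigr),
\end{equation*}
with $T'_ab=T_ab+R(a,b)$, and treat the terms in this order.
\begin{enumerate}
\item[(i)] \emph{Main term $[T_w,\dot\Delta_k]\partial_jv$.} By spectral localization only $|k'-k|\le4$ contributes. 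I write $\dot\Delta_k=2^{kd}h(2^k\cdot)\ast$ and apply the mean value theorem inside the convolution, which gives
\[
\|[\dot S_{k'-1}w,\dot\Delta_k]g\|_{L^2}\lesssim2^{-k}\|\nabla w\|_{L^\infty}\|g\|_{L^2}.
\]
With $g=\dot\Delta_{k'}\partial_jv$, Bernstein gives $\|g\|_{L^2}\lesssim2^{k'}\|\dot\Delta_{k'}v\|_{L^2}$. Then $\dot B^{d/2}_{2,1}\hookrightarrow L^\infty$ bounds this piece by $2^{-ks}\|\nabla w\|_{\dot B^{d/2}_{2,1}}\|v\|_{\dot B^s_{2,\infty}}$ for every $s$.
\item[(ii)] \emph{Term $\dot\Delta_k(T_{\partial_jv}w)$.} I use
\[
\|\dot S_{k'-1}\partial_jv\|_{L^\infty}\lesssim\sum_{k''\le k'-2}2^{k''(d/2+1-s)}\|v\|_{\dot B^s_{2,\infty}}.
\]
This sum is bounded by $2^{k'(d/2+1-s)}\|v\|_{\dot B^s_{2,\infty}}$ only when $s<d/2+1$; at $s=d/2+1$ it grows linearly in $k'$. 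So the endpoint must be handled differently: I would move the derivative onto $w$, writing $T_{\partial_jv}w=\partial_j(T_vw)-T_v\partial_jw$, so that only $\dot S_{k'-1}v$ (with exponent $d/2-s$) and $\dot\Delta_{k'}\nabla w$ appear. Then the $\ell^1$ summability of $\nabla w$ in $\dot B^{d/2}_{2,1}$ absorbs the borderline.
\item[(iii)] \emph{Terms $T'_{\dot\Delta_k\partial_jv}w$ and $R(w,\dot\Delta_k\partial_jv)$.} These involve only high frequencies of $w$ against the single block $\dot\Delta_kv$. Bernstein in $L^\infty$ on $\dot\Delta_kv$ and the $\ell^1$ bound on $2^{k'd/2}\|\dot\Delta_{k'}\nabla w\|_{L^2}$ give the bound for all $s$.
\item[(iv)] \emph{Remainder $\dot\Delta_kR(w,\partial_jv)$.} I would write it as $\partial_jR(w,v)-R(\partial_jw,v)$. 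Each piece is estimated in $L^1$ and then brought back to $L^2$ by Bernstein, at the cost $2^{kd/2}$. The series over $k'\ge k-2$ converges exactly when $s+d/2>0$. At the endpoint $s=-d/2$ the geometric factor is $1$, and convergence again comes from the $\ell^1$ summation in $w$ paired with the $\ell^\infty$ norm of $v$. This gives a $\sup_k$ bound rather than an $\ell^1$ one, which is precisely why \eqref{Commu_s} carries $\sup_k$ and $\dot B^s_{2,\infty}$.
\end{enumerate}

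\emph{Product estimate \eqref{Product_sigma}.} Write $fg=T_fg+T_gf+R(f,g)$.
\begin{enumerate}
\item[(i)] $T_fg$ is bounded in $\dot B^{-\sigma}_{2,1}$ by $\|f\|_{L^\infty}\|g\|_{\dot B^{-\sigma}_{2,1}}$, using $\dot B^{d/2}_{2,1}\hookrightarrow L^\infty$.
\item[(ii)] For $T_gf$ one has
\[
\|\dot S_{k'-1}g\|_{L^\infty}\lesssim\sum_{k''\le k'-2}2^{k''(d/2+\sigma)}\|g\|_{\dot B^{-\sigma}_{2,1}}.
\]
Since $\sigma\ge-d/2$ and the $\ell^1$ index allows the endpoint, this is $\lesssim2^{k'(d/2+\sigma)}\|g\|_{\dot B^{-\sigma}_{2,1}}$, which gives the bound $\|f\|_{\dot B^{d/2}_{2,1}}\|g\|_{\dot B^{-\sigma}_{2,1}}$.
\item[(iii)] The remainder lies in $\dot B^{d/2-\sigma}_{1,1}$ because $d/2-\sigma\ge0$. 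Both summability indices equal $1$, so the case of equality is allowed. The embedding $\dot B^{d/2-\sigma}_{1,1}\hookrightarrow\dot B^{-\sigma}_{2,1}$ then concludes.
\end{enumerate}

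\emph{Main obstacle.} The delicate point is the endpoint bookkeeping: the case $s=d/2+1$ in step (ii) of the commutator estimate, and the cases $s=-d/2$ and $\sigma=\pm d/2$. There the dyadic geometric series do not converge, and one must place the derivative on the right factor and exploit the $\ell^1$ summability of $\nabla w$ (respectively of $f$ and $g$) instead.
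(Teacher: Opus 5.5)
\textbf{Verdict.} The paper gives no proof of this lemma; it only cites \cite[Chapter 2]{Bahouri_2011_1}. Your paraproduct argument is the standard one behind that reference. There is a genuine gap at the two upper endpoints: your proposed fixes fail there, and in fact both inequalities are false there.

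\textbf{What works.} Your bounds are correct for $-d/2\le s<d/2+1$ in \eqref{Commu_s} and for $-d/2\le\sigma<d/2$ in \eqref{Product_sigma}. This includes $s=-d/2$, where the remainder is indeed handled by pairing the $\ell^1$ sum in $\nabla w$ with the $\ell^\infty$ norm of $v$. One minor slip: since $T'_ab=T_ab+R(a,b)$, your five-term identity counts $R(w,\dot\Delta_k\partial_jv)$ twice.

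\textbf{Commutator at $s=d/2+1$.} In the homogeneous setting, $\sum_{k''\le k'-2}2^{k''(d/2+1-s)}$ is infinite at $s=d/2+1$, not linearly growing. Your rewriting $T_{\partial_jv}w=\partial_j(T_vw)-T_v\partial_jw$ makes this worse: $\|\dot S_{k'-1}v\|_{L^\infty}$ then involves $\sum_{k''\le k'-2}2^{k''(d/2-s)}$, whose exponent is $-1$. The divergence comes from the low frequencies of $v$, so no summability of $\nabla w$ can absorb it.

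A counterexample: let $\widehat{v_N}(\xi)=-i\xi_1|\xi|^{-d-2}\chi_N(\xi)$, where $\chi_N\ge0$ is a smooth cutoff to $2^{-N}\le|\xi|\le2^{-M}$ and $M$ is fixed and large.
\begin{itemize}
\item One has $\|v_N\|_{\dot B^{d/2+1}_{2,\infty}}\lesssim1$ uniformly in $N$.
\item Since $\widehat{\partial_1v_N}\ge0$, one gets $\partial_1v_N\ge c(N-M)$ on $B(0,\varepsilon2^M)$.
\item Take $w$ with $\hat w$ supported in $\{\xi:\varphi(2^{-k}\xi)=1\}$ for some $k\ge0$, and with half of its $L^2$ mass in that ball.
\item Then $\dot\Delta_k\partial_1v_N=0$ and $\dot\Delta_k(w\,\partial_1v_N)=w\,\partial_1v_N$.
\end{itemize}
So the left side of \eqref{Commu_s} is $\gtrsim(N-M)\,2^{k(d/2+1)}\|w\|_{L^2}$, while the right side is $\lesssim2^{k(d/2+1)}\|w\|_{L^2}$. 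At $s=d/2+1$ only the $\ell^1$ form holds, with $v\in\dot B^{d/2+1}_{2,1}$; that is Lemma~\ref{Commutator_estimate}.

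\textbf{Product at $\sigma=d/2$.} Your claim that the remainder lies in $\dot B^0_{1,1}$ ``because both summability indices equal $1$'' is false. When $s_1+s_2=0$, one has $\|\dot\Delta_jR(f,g)\|_{L^1}\lesssim\sum_{k'\ge j-3}a_{k'}b_{k'}$. Summing over $j$ counts each product $a_{k'}b_{k'}$ infinitely often, so only $\dot B^0_{1,\infty}$ is available, whatever $r_1,r_2$ are.

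Concretely, take $f=g$ real with $\hat f$ supported in a fixed annulus, e.g.\ $f=\psi(x)\cos(x\cdot\eta)$ with $\hat\psi$ supported in a small ball. The right-hand side is finite. But $\widehat{f^2}(0)=\|f\|_{L^2}^2>0$ gives $2^{-jd/2}\|\dot\Delta_j(f^2)\|_{L^2}\to c\|f\|_{L^2}^2$ as $j\to-\infty$, so $f^2\notin\dot B^{-d/2}_{2,1}$. The correct endpoint statement has $\dot B^{-d/2}_{2,\infty}$ on the left.

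\textbf{Effect on the paper.} Neither endpoint is used. \eqref{Commu_s} is applied only in Lemma~\ref{negative_Besov_Norm}, with $s=-\sigma\in[-d/2,d/2)$, and \eqref{Product_sigma} is not invoked anywhere. Your argument therefore covers everything the paper needs. It cannot prove the lemma as stated, however: the ranges must be restricted to $s<d/2+1$ and $\sigma<d/2$.
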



\begin{thebibliography}{10}

\bibitem{Bahouri_2011_1}
H.~Bahouri, J-Y Chemin, and R.~Danchin.
\newblock {\em Fourier analysis and nonlinear partial differential equations},
  volume 343.
\newblock Springer Science \& Business Media, 2011.

\bibitem{Capiro_1996_2}
A.~Carpio.
\newblock Large-time behavior in incompressible {N}avier-{S}tokes equations.
\newblock {\em SIAM J. Math. Anal.}, 27(2):449--475, 1996.

\bibitem{Capiro_1996_1}
A.~Carpio.
\newblock Large time behaviour in convection-diffusion equations.
\newblock {\em Ann. Scuola Norm. Sup. Pisa Cl. Sci. (4)}, 23(3):551--574, 1996.

\bibitem{Chemin_Lerner_1995}
J-Y Chemin and N.~Lerner.
\newblock Flot de champs de vecteurs non lipschitziens et {\'e}quations de
  {N}avier--{S}tokes.
\newblock {\em J. Differential Equations}, 121(2):314--328, 1995.

\bibitem{Crin-Bara_Danchin_2021}
T.~Crin-Barat and R.~Danchin.
\newblock Partially dissipative one-dimensional hyperbolic systems in the
  critical regularity setting, and applications,.
\newblock {\em Pure Appl. Math}, 2021.

\bibitem{CRINBARAT20221}
T.~Crin-Barat and R.~Danchin.
\newblock Partially dissipative hyperbolic systems in the critical regularity
  setting: The multi-dimensional case.
\newblock {\em Journal de Math{\'e}matiques Pures et Appliqu{\'e}es},
  165:1--41, 2022.

\bibitem{Crin-Barat:2025aa}
T.~Crin-Barat, L.-Y. Shou, and J.~Zhang.
\newblock Strong relaxation limit and uniform time asymptotics of the jin-xin
  model in the lp framework.
\newblock {\em Science China Mathematics}, 2025.

\bibitem{Zhao_Zhu_2015}
C.Zhao and H.~Zhu.
\newblock Upper bound of decay rate for solutions to the
  {N}avier--{S}tokes--{V}oigt equations in ${R}^3$.
\newblock {\em Appl. Math. Comput}, 256:183--191, 2015.

\bibitem{Danchin_2001_2}
R.~Danchin.
\newblock Global existence in critical spaces for flows of compressible viscous
  and heat-conductive gases.
\newblock {\em Arch. Ration. Mech. Anal.}, 160(1):1--39, 2001.

\bibitem{Danchin_EMS}
R.~Danchin.
\newblock Partially dissipative systems in the critical regularity setting, and
  strong relaxation limit.
\newblock {\em EMS Surv. Math. Sci.}, 9(1):135--192,, 2022.

\bibitem{Danchin:2017aa}
R.~Danchin and J.~Xu.
\newblock Optimal time-decay estimates for the compressible {N}avier--{S}tokes
  equations in the critical ${L}^p$ framework.
\newblock {\em Archive for Rational Mechanics and Analysis}, 224(1):53--90,
  2017.

\bibitem{Francesco:2007aa}
M.~di~Francesco.
\newblock Initial value problem and relaxation limits of the hamer model for
  radiating gases in several space variables.
\newblock {\em Nonlinear Differential Equations and Applications NoDEA},
  13(5):531--562, 2007.

\bibitem{Duan_Klem_Zhu_2010}
R.~Duan, K.~Fellner, and C.~Zhu.
\newblock Energy method for multi-dimensional balance laws with non-local
  dissipation.
\newblock {\em J. Math. Pures Appl. (9)}, 93(6):572--598, 2010.

\bibitem{Duan_Ruan_Zhu_2012}
R.~Duan, L.~Ruan, and C.~Zhu.
\newblock Optimal decay rates to conservation laws with diffusion-type terms of
  regularity-gain and regularity-loss.
\newblock {\em Math. Mod. Meth. Appl. Sci.}, 22(7), 2012.

\bibitem{Duro_Carpio_2001}
G.~Duro and A.~Carpio.
\newblock Asymptotic profiles for convection-diffusion equations with variable
  diffusion.
\newblock {\em Nonlinear Anal.}, 45(4, Ser. A: Theory Methods):407--433, 2001.

\bibitem{Duro_Zuazua_1999}
G.~Duro and E.~Zuazua.
\newblock Large time behavior for convection-diffusion equations in {${\bf
  R}^N$} with asymptotically constant diffusion.
\newblock {\em Comm. Partial Differential Equations}, 24(7-8):1283--1340, 1999.

\bibitem{Escob_Vazq_Zuazua_1993_2}
M.~Escobedo, J.~L V{{\'a}}zquez, and E.~Zuazua.
\newblock Asymptotic behaviour and source-type solutions for a
  diffusion-convection equation.
\newblock {\em Arch. Rational Mech. Anal.}, 124(1):43--65, 1993.

\bibitem{EscobVasqZuazua_1993}
M.~Escobedo, J.~L. V{{\'a}}zquez, and E.~Zuazua.
\newblock A diffusion-convection equation in several space dimensions.
\newblock {\em Indiana Univ. Math. J.}, 42(4):1413--1440, 1993.

\bibitem{EscoZuazua_1991}
M.~Escobedo and E.~Zuazua.
\newblock Large time behavior for convection-diffusion equations in {${\bf
  R}^N$}.
\newblock {\em J. Funct. Anal.}, 100(1):119--161, 1991.

\bibitem{G_Gui_2013}
G.~Gui.
\newblock {\em Stability to the Incompressible {N}avier-{S}tokes Equations},
  volume 132.
\newblock Spriger, Heidelberg, 2013.

\bibitem{Guo_Wang_2012}
Y.~Guo and Y.~Wang.
\newblock Decay of dissipative equations and negative {S}obolev spaces.
\newblock {\em Communications in Partial Differential Equations},
  37(12):2165--2208, 2012.

\bibitem{Ha71}
K.~Hamer.
\newblock Nonlinear effects on the propagation of sound waves in a radiating
  gas.
\newblock {\em Quart. J. Mech. Appl. Math.}, 24:155--168, 1971.

\bibitem{Iguchi_2002}
T.~Iguchi and S.~Kawashima.
\newblock On space-time decay properties of solutions to hyperbolic-elliptic
  coupled systems.
\newblock {\em Hiroshima Mathematical Journal}, 32(2):229--308, 2002.

\bibitem{Karch_Schon_2002}
G.~Karch and M.~E. Schonbek.
\newblock On zero mass solutions of viscous conservation laws.
\newblock {\em Comm. Partial Differential Equations}, 27(9-10):2071--2100,
  2002.

\bibitem{Kawashima_1998}
S.~Kawashima and S.~Nishibata.
\newblock Weak solutions with a shock to a model system of the radiating gas.
\newblock {\em Sci. Bull. Josai Univ}, 5:119--130, 1998.

\bibitem{Kawash_Nishibata_1999}
S.~Kawashima and S.~Nishibata.
\newblock Shock waves for a model system of the radiating gas.
\newblock {\em SIAM J. Math. Anal.}, 30(1):95--117 (electronic), 1999.

\bibitem{Lattanzio_2003}
C.~Lattanzio and P.~Marcati.
\newblock Global well-posedness and relaxation limits of a model for radiating
  gas.
\newblock {\em Journal of Differential Equations}, 190(2):439--465, 2003.

\bibitem{Lattanzio_2009}
C.~Lattanzio, C.~Mascia, T.~Nguyen, R.G Plaza, and K.~Zumbrun.
\newblock Stability of scalar radiative shock profiles.
\newblock {\em SIAM Journal on Mathematical Analysis}, 41(6):2165--2206, 2009.

\bibitem{Lattanzio_2007}
C.~Lattanzio, C.and~Mascia and D.~Serre.
\newblock Shock waves for radiative hyperbolic--elliptic systems.
\newblock {\em Indiana University Mathematics Journal}, pages 2601--2640, 2007.

\bibitem{Lauren_2005}
P.~Lauren{\c{c}}ot.
\newblock Asymptotic self-similarity for a simplified model for radiating
  gases.
\newblock {\em Asymptot. Anal.}, 42(3-4):251--262, 2005.

\bibitem{Lin_2007}
C.~Lin, J-F Coulombel, and T.~Goudon.
\newblock Asymptotic stability of shock profiles in radiative hydrodynamics.
\newblock {\em Comptes Rendus Mathematique}, 345(11):625--628, 2007.

\bibitem{Ma76}
A.~Matsumura.
\newblock On the asymptotic behavior of solutions of semi-linear wave
  equations.
\newblock {\em Publ. Res. Inst. Math. Sci. Kyoto. Univ}, 12(1):169--189, 1976.

\bibitem{Nash_1}
J.~Nash.
\newblock Continuity of solutions of parabolic and elliptic equations.
\newblock {\em American Journal of Mathematics}, 80(4):931--954, 1958.

\bibitem{Rosenau_1989}
P.~Rosenau.
\newblock Extending hydrodynamics via the regularization of the chapman-enskog
  expansion.
\newblock {\em Physical Review A}, 40(12):7193, 1989.

\bibitem{Schonbek_1986}
M.~E. Schonbek.
\newblock Large time behaviour of solutions to the {N}avier-{S}tokes equations.
\newblock {\em Communications in Partial Differential Equations},
  11(7):733--763, (1986).

\bibitem{Schonb_1991}
M.~E. Schonbek.
\newblock Lower bounds of rates of decay for solutions to the {N}avier-{S}tokes
  equations.
\newblock {\em J. Amer. Math. Soc.}, 4(3):423--449, 1991.

\bibitem{Serre_2003}
D.~Serre.
\newblock ${L}^1$-stability of constants in a model for radiating gases.
\newblock {\em Communications in Mathematical Sciences}, 1(1):197--205, 2003.

\bibitem{Sohinger_Strain_2014}
V.~Sohinger and R.~M. Strain.
\newblock The {B}oltzmann equation, {B}esov spaces, and optimal time decay
  rates in $ \mathbb{R}_x^n$.
\newblock {\em Advances in Mathematics}, 261:274--332, 2014.

\bibitem{Stein_1}
E.~M. Stein.
\newblock {\em Singular integrals and differentiability properties of functions
  (PMS-30)}, volume~30.
\newblock Princeton university press, 2016.

\bibitem{VINCENTI}
W.~G. Vincent and C.~H. Kruger.
\newblock {\em Introduction to Physical Gas Dynamics,}.
\newblock A Wiley and Sons , New York, 1965.

\bibitem{XuXin}
Z.~Xin and J.~Xu.
\newblock Optimal decay for the compressible {N}avier-{S}tokes equations
  without additional smallness assumptions.
\newblock {\em Journal of Differential Equations}, 274, 543-575, 2021.

\bibitem{Xu_Kawashima_2015}
J.~Xu and S.~Kawashima.
\newblock The optimal decay estimates on the framework of {B}esov spaces for
  generally dissipative systems.
\newblock {\em Arch. Ration. Mech. Anal.}, 218:275--315, 2015.

\bibitem{Kawashima_1}
J.~Xu, N.~Mori, and S.~Kawashima.
\newblock Global existence and minimal decay regularity for the {T}imoshenko
  system: The case of non-equal wave speeds.
\newblock {\em Journal of Differential Equations}, 259(11):5533--5553, 2015.

\bibitem{Zha_JMP}
J.~Zhao.
\newblock The optimal temporal decay estimates for the fractional power
  dissipative equation in negative besov spaces.
\newblock {\em J. Math. Phys.}, 57(5):051504, 2016.

\bibitem{Zuazua_1993}
E.~Zuazua.
\newblock Weakly nonlinear large time behavior in scalar convection-diffusion
  equations.
\newblock {\em Differential Integral Equations}, 6(6):1481--1491, 1993.

\end{thebibliography}

\end{document}